\begin{document}
\newcommand{\up}{\vspace*{-0.05cm}}
\allowdisplaybreaks[1]
\numberwithin{equation}{section}
\renewcommand{\theequation}{\thesection.\arabic{equation}}
\newtheorem{thm}{Theorem}[section]
\newtheorem{lemma}{Lemma}[section]
\newtheorem{pro}{Proposition}[section]
\newtheorem{prob}{Problem}[section]
\newtheorem{quest}{Question}[section]
\newtheorem{example}{Example}[section]
\newtheorem{cor}{Corollary}[section]
\newtheorem{conj}{Conjecture}[section]
\newtheorem{cl}{Claim}[section]
\newtheorem{df}{Definition}[section]
\newtheorem{rem}{Remark}[section]
\newtheorem{note}{Note}[section]
\newcommand{\beq}{\begin{equation}}
\newcommand{\eeq}{\end{equation}}
\newcommand{\<}[1]{\left\langle{#1}\right\rangle}
\newcommand{\be}{\beta}
\newcommand{\ee}{\end{enumerate}}
\newcommand{\Bul}{\mbox{$\bullet$ } }
\newcommand{\al}{\alpha}
\newcommand{\ep}{\epsilon}
\newcommand{\si}{\sigma}
\newcommand{\om}{\omega}
\newcommand{\la}{\lambda}
\newcommand{\La}{\Lambda}
\newcommand{\Ga}{\Gamma}
\newcommand{\ga}{\gamma}
\newcommand{\im}{\Rightarrow}
\newcommand{\2}{\vspace{.2cm}}
\newcommand{\es}{\emptyset}

\vspace{-2cm}
\markboth{R. Rajkumar and P. Devi}{Permutability Graphs of Non-abelian Groups}
\title{\LARGE\bf Permutability graphs of subgroups of some finite non-abelian groups}
\author{R. Rajkumar\footnote{e-mail: {\tt rrajmaths@yahoo.co.in}},\ \ \
P. Devi\footnote{e-mail: {\tt pdevigri@gmail.com}}\\
{\footnotesize Department of Mathematics, The Gandhigram Rural Institute -- Deemed University,}\\ \footnotesize{Gandhigram -- 624 302, Tamil Nadu, India}\\[3mm]
}
\date{}
\maketitle
\doublespacing

\begin{abstract}
In this paper, we study  the structure of the permutability graphs of subgroups, and the permutability graphs of non-normal subgroups of the following groups: the dihedral groups $D_n$, the generalized quaternion groups $Q_n$, the quasi-dihedral groups $QD_{2^n}$ and the modular groups $M_{p^n}$. Further,   we investigate the number of edges, degrees of the vertices, independence number, dominating number,  clique number, chromatic number, weakly perfectness, Eulerianness, Hamiltonicity of these graphs.
\paragraph{Keywords and phrases:} Permutability graphs, Non-abelian groups, Independence number, dominating number, Weakly perfect, Eulerian, Hamiltonian.
\paragraph{2010 Mathematics Subject Classification:} 05C25,  05C07, 05C15, 05C45, 05C69, 20K27.
\end{abstract}
\section{Introduction}
\label{sec:intro}
One can study the properties of an algebraic structure by associating a suitable graph with it and by using of the tools of graph theory. 
In recent years this has been a topic of interest among algebraic graph theorists  and  they have contributed significantly; in particular, when the algebraic structure
is a group (see, for example \cite{abdolla}, \cite{abd},  \cite{bertram}, \cite{will}).

M. Bianchi, A. Gillio and L. Verardi in \cite{binachi 1}, defined a graph corresponding to a group $G$, called
the \emph{permutability graph of non-normal subgroups of $G$} having all the proper non-normal subgroups of $G$ as its vertices and two
vertices $H$ and $K$ are adjacent if  $HK=HK$; equivalently $HK$ is a subgroup of $G$. We denote this graph by $\Gamma_N(G)$.  They focused on finding the number of connected components
and the diameter of this graph. Further results  on these graphs can be found in \cite{gillio}, \cite{binachi 2}.

 In \cite{raj1}, the authors considered the general setting as follows: For a group $G$, \emph{the permutability
graph of subgroups of $G$}, denoted by $\Gamma(G)$, is a graph with vertex set consists of all the proper subgroups of $G$ and
two vertices in $\Gamma(G)$ are adjacent if  the two corresponding subgroups permute in $G$. There in we have studied the planarity of these graphs. Further properties of these graphs like bipartiteness, completeness and freeness of
these graphs from some class of graphs were investigated in \cite{raj2}.

The aim of this paper is to study the structure and properties of the permutability graphs of subgroups, and the permutability graphs of non-normal subgroups of finite non-abelian groups. Especially, we consider the dihedral groups $D_n$, the generalized quaternion groups $Q_n$, the quasi-dihedral groups $QD_{2^n}$ and the modular groups $M_{p^n}$. Even though the subgroup structure of these groups are well known, we particulary focus on how much information about the  subgroup permutability of these groups can be expressed in terms of the graph theoretic properties of their corresponding permutability graphs.

The rest of this paper is arranged as follows: In Section~\ref{sec:pre}, we introduce some basic definitions and notations that we will use in this article. In Section~\ref{sec:basic}, for a given group $G$, we present some basic results which gives the relationship between $\Gamma_N(G)$
and $\Gamma(G)$. In Section~\ref{sec:dih}, we consider the dihedral groups $D_n$ and study the structure and properties of $\Gamma_N(D_n)$
and $\Gamma(D_n)$. In particular, we
 give the degrees of  the vertices,   number of edges, independence number, dominating number,
chromatic number, clique number, weakly perfectness, Eulerianness of these two graphs. Also we investigate Hamiltonicity  of $\Gamma_N(D_n)$ and $\Gamma(D_n)$ for some values of $n$.
In Sections~\ref{sec:qua},~\ref{sec:quasi} and~\ref{sec:mod} we investigate the same  for the generalized quaternion groups $Q_n$,
the quasi-dihedral groups $QD_{2^n}$ and the modular groups $M_{p^n}$ respectively. In the last section, we conclude with some problems of further research.

\section{Preliminaries and Notations}\label{sec:pre}
For a simple graph $G$, we denote its vertex set and edge set by $V(G)$ and $E(G)$ respectively. A graph is \emph{complete} if all the
vertices are adjacent with each other. A complete graph on $n$ vertices is denoted by $K_n$. An \emph{independent set} is a set of
vertices in $G$ of which no two are adjacent. An independent set is \emph{maximal} if it is not a proper subset of any independent set of G. The \emph{independence number} $\alpha (G)$
of $G$ is the cardinality of a largest maximal independent set in $G$. A set $D$ of vertices in $G$ is a \emph{dominating set} in $G$ if  every vertex in  $G$ is
contained in or is adjacent to a vertex in $D$. The \emph{dominating number} $\gamma (G)$
of $G$ is the cardinality of a smallest dominating set in $G$. A \emph{clique}  is a set of vertices in $G$ such that any two
are adjacent. The \emph{clique number} $\omega (G)$ of $G$ is the cardinality of a largest  clique in $G$. The \emph{chromatic number}
$\chi (G)$ of $G$ is the smallest number of colours needed to
colour the vertices of $G$ such that no two adjacent vertices gets the same colour. A graph $G$ is said to be \emph{weakly perfect}
if $\omega (G)=\chi (G)$.
The $\deg_G(v)$ of a vertex $v$ of a graph $G$ is the number of edges incident with $v$. A graph $G$ is said to be \emph{$r$-partite} if the vertex of $G$ can be partitioned into $r$ sets such that no two vertices in each partition are adjacent. $G$ is \emph{complete $r$-partite} if every vertex in each partition is adjacent with all the vertices in the remaining partition.

A \textit{walk} joining two vertices $v_0$ and $v_n$ in $G$ is
an alternating sequence of vertices and edges $v_0$, $e_1$, $v_1$, $e_2$, $\ldots$ ,$v_{n-1}$, $e_n$, $v_n$ beginning and ending with
vertices such that each edge $e_i$ is incident with $v_{i-1}$ and $v_i$. A walk is a \textit{path} if all its vertices are distinct. We denote a path
joining two vertices $u$ and $v$ in $G$ simply, as $( u= ) v_0 - v_1 - v_2 - \cdots - v_n ( =v ) $ with the understanding that there is
an edge joining $v_{i-1}$ and $v_i$, for each $i$, $1 \leq i \leq n$. A walk is called \textit{closed} if its initial and terminal vertices coincides.
A closed walk in which all
the vertices are distinct is a \textit{cycle}. $G$ is said to be \emph{connected} if
any distinct two vertices are joined by a path. A \emph{component} of a graph $G$ is a maximal connected subgraph of $G$. A number of component of
a graph $G$ is denoted by $c(G)$.

Let $G_1 =(V_1, E_1)$ and $G_2 = (V_2, E_2)$ be two simple graphs. Their \textit{union} $G_1 \cup G_2$ is
a graph with
vertex set $V_1 \cup V_2$ and edge set $E_1 \cup E_2$. Their \textit{join} $G_1 + G_2$ is a graph consist of $G_1 \cup G_2$ together with all the lines
joining points of $V_1$ to points of $V_2$. For any connected graph $G$, we write $nG$ for the graph with $n$ components each isomorphic to $G$.
A graph $G$ is said to be \textit{Eulerian}, if it contains a closed trail which contains every edge of $G$ exactly once;  equivalently, $G$ is
Eulerian if and only if every vertex in $G$ has even degree.
A graph is said to be \textit{Hamiltonian}, if it contains a  cycle having all the vertices of the graph. A \emph{split graph} is a graph in which the vertices can be partitioned into a clique and an independent set.
For basic graph theory terminology, we refer to \cite{west}.

We recall the following theorem which will be use in the sequel.
\begin{thm}\normalfont(\cite[Proposition 7.2.3]{west})\label{l3}
If a graph $G$ is Hamiltonian, then $c(G-S)\leq |S|$ for every non-empty subset $S$ of $V(G)$.
\end{thm}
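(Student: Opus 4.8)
The plan is to exploit a Hamiltonian cycle of $G$ as a spanning subgraph and to compare the number of components it produces upon deletion of $S$ with that produced by $G$ itself. Let $C$ be a Hamiltonian cycle of $G$; since $C$ contains every vertex of $G$, for any non-empty subset $S$ of $V(G)$ the graph $C-S$ is a spanning subgraph of $G-S$.

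First I would record the elementary monotonicity fact: if $H$ is a spanning subgraph of a graph $F$, then $c(F)\leq c(H)$. Indeed, any two vertices joined by a path in $H$ are joined by the same path in $F$ (as $E(H)\subseteq E(F)$), so ``connected in $F$'' is a coarser equivalence relation on $V(F)=V(H)$ than ``connected in $H$''; hence each component of $F$ is a union of components of $H$, giving $c(F)\leq c(H)$. Applying this with $F=G-S$ and $H=C-S$ yields $c(G-S)\leq c(C-S)$.

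Next I would bound $c(C-S)$ directly. Write $|S|=k\geq 1$ and view $C$ as a cyclic arrangement of the $n=|V(G)|$ vertices. Deleting the $k$ vertices of $S$ breaks $C$ into at most $k$ pairwise vertex-disjoint arcs (some possibly empty): travelling around the cycle, the vertices not in $S$ that lie strictly between two consecutive members of $S$ constitute one such arc, and each arc, being a subpath of $C$, is connected. Thus $c(C-S)\leq k=|S|$, and combining this with the previous inequality gives $c(G-S)\leq|S|$, as required. The boundary case $S=V(G)$ is immediate, since then $G-S$ is empty and $c(G-S)=0\leq|S|$.

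The only points needing care are the correct direction of the monotonicity inequality --- it is the graph with \emph{fewer} edges that has \emph{more} components --- and the bookkeeping that a cycle split at $k$ points falls into at most $k$ arcs; neither of these is a genuine obstacle, so the argument stays short.
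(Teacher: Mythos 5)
Your proof is correct: the paper gives no proof of this statement at all, simply quoting it from West's textbook, and your argument (a Hamiltonian cycle $C$ gives a spanning subgraph $C-S$ of $G-S$, so $c(G-S)\leq c(C-S)\leq |S|$ since deleting $k$ vertices from a cycle leaves at most $k$ arcs) is precisely the standard proof of that cited proposition. Nothing needs fixing.
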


 For any  positive integer $n$,
$\tau(n)$ denotes the number of divisors of $n$ and $\sigma(n)$ denotes the sum of all divisors of $n$.
\noindent A positive integer $n$ is said to be \emph{deficient} if $\sigma(n)<2n$.

\begin{note}\label{n1} In some of the results of this paper, we will use the following basic facts which can be found in any basic number theory book; for instance, see \cite{mart}:
 Let $n=p_1^{\alpha_1}p_2^{\alpha_2}\ldots p_k^{\alpha_k}$ be a integer, where $p_i$'s are distinct primes and $\alpha_i\geq 1$. Then
$\displaystyle \tau(n)= \displaystyle\prod_{i=1}^k(\alpha_i+1)$. Moreover,
\begin{enumerate}[{\normalfont (i)}]
\item $\tau(n)$ is even if and only if $\alpha_i$ is odd for some $i\in\{1,2,\ldots,k\}$.
\item $\tau(n)$ is odd if and only if $\alpha_i$ is even for every $i\in\{1,2,\ldots,k\}$.
\item If $n$ is odd, then $\sigma(n)$ is odd (even) if and only if $\tau(n)$ is odd (even).
\item If $n=2^\alpha$, $\alpha\geq 1$, then $\sigma(n)$ is odd.
\item If $n=2^\alpha n'$, with $n'=p_2^{\alpha_2}\ldots p_k^{\alpha_k}$ is odd and $\alpha\geq 1$, then $\sigma(n)$ is odd (even) if and only if $\tau(n')$ is odd (even).
\end{enumerate}
\end{note}

\section{Some basic results}\label{sec:basic}
Recall that a group in which all the subgroups are normal is known as a Dedekind group. For the characterization of Dedekind groups the reader can refer to \cite[Theorem 5.3.7, p.143]{rob}.
For a given group $G$, we can define $\Gamma_N(G)$ only when $G$ is other than a Dedekind group; and we can define $\Gamma(G)$ only when
$G$ is other than the trivial group or a group of prime order. Also note that $|V(\Gamma (G)|= |L(G)|-2$, where $L(G)$ denotes the subgroup lattice of $G$.

Since any normal subgroup of a group permutes with every other subgroups, so we have the following result.
\begin{thm}\label{permutability nonabelian t1}
 Let $G$ be a finite non-simple group with $r$ proper normal subgroups. Then
\begin{equation*}
 \Gamma(G) \cong
\begin{cases}
		K_r+\Gamma_N(G), & \mbox{if~ } r\neq |L(G)|-2; \\
		K_r, &   \mbox{otherwise.}
	\end{cases}
\end{equation*}
\end{thm}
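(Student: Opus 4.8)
The plan is to partition $V(\Gamma(G))$ by normality and then recognise the join structure from the elementary fact that a normal subgroup permutes with every subgroup. Write $\mathcal{N}$ for the set of nontrivial proper normal subgroups of $G$ and $\mathcal{M}$ for the set of non-normal subgroups of $G$ (these are automatically nontrivial and proper). With the conventions fixed in Section~\ref{sec:basic}, $V(\Gamma(G))=\mathcal{N}\sqcup\mathcal{M}$ with $|\mathcal{N}|=r$, and whenever $\mathcal{M}\neq\emptyset$ the graph $\Gamma_N(G)$ is exactly the subgraph of $\Gamma(G)$ induced on $\mathcal{M}$, since two non-normal subgroups are adjacent in either graph precisely when they permute. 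Because $G$ is non-simple, $r\geq 1$, so $K_r$ is a non-empty graph.

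First I would record the key observation: if $N\trianglelefteq G$ and $H\leq G$ then $NH=HN\leq G$, so any vertex of $\Gamma(G)$ lying in $\mathcal{N}$ is adjacent to every other vertex of $\Gamma(G)$. Two consequences follow immediately. The subgraph of $\Gamma(G)$ induced on $\mathcal{N}$ is complete, hence isomorphic to $K_r$; and every vertex of $\mathcal{N}$ is joined to every vertex of $\mathcal{M}$. Assuming $\mathcal{M}\neq\emptyset$ and using the identification of the subgraph induced on $\mathcal{M}$ with $\Gamma_N(G)$, we conclude that $\Gamma(G)$ is obtained from $K_r$ and $\Gamma_N(G)$ by adding all edges between their vertex sets; by the definition of join in Section~\ref{sec:pre} this says $\Gamma(G)\cong K_r+\Gamma_N(G)$.

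It then remains to handle the degenerate case $\mathcal{M}=\emptyset$, that is, when every nontrivial proper subgroup of $G$ is normal (a Dedekind group), for which $\Gamma_N(G)$ is not defined. Since the number of nontrivial proper subgroups of $G$ is $|L(G)|-2$, we have $\mathcal{M}=\emptyset$ if and only if $r=|L(G)|-2$; in that case $V(\Gamma(G))=\mathcal{N}$ and the key observation makes $\Gamma(G)$ complete, so $\Gamma(G)\cong K_r$. If instead $r\neq|L(G)|-2$ then $\mathcal{M}\neq\emptyset$, $\Gamma_N(G)$ is defined, and the preceding paragraph gives $\Gamma(G)\cong K_r+\Gamma_N(G)$. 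This is the claimed dichotomy.

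The argument is essentially bookkeeping, so I do not expect a genuine obstacle; the one point deserving care is the alignment of the ``proper subgroup'' conventions for $\Gamma$, for $\Gamma_N$, and for the count $r$, so that $V(\Gamma(G))=\mathcal{N}\sqcup\mathcal{M}$ holds exactly and the threshold $r=|L(G)|-2$ really does coincide with $\Gamma_N(G)$ being undefined.
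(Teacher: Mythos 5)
Your proposal is correct and follows essentially the same route as the paper, which derives the theorem from the single observation that a normal subgroup permutes with every subgroup, so the $r$ normal vertices form a complete graph joined to all remaining vertices, with the non-normal vertices inducing $\Gamma_N(G)$ and the case $r=|L(G)|-2$ covering Dedekind-type degeneracy. Your extra care about aligning the ``proper subgroup'' conventions is a reasonable bookkeeping remark but does not change the argument.
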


\begin{cor}\label{permutability nonabelian c1}
 Let $G$ be a finite non-simple group with $r$ proper normal subgroups. If $r\neq |L(G)|-2$, then
 the following holds:
\begin{enumerate}[{\normalfont (i)}]
\item $\deg_{\Gamma(G)}(H)$ $=
	\begin{cases}
		|L(G)|-3, & \mbox{if~} H \mbox{~is normal in~} G;\\
		r+\deg_{\Gamma_N(G)}(H), & \mbox{otherwise.}
	\end{cases}$.
\item $|E(\Gamma(G))|=|E(\Gamma_N(G))|+\displaystyle\frac{r}{2}~(2|L(G)|-r-5)$.
\item $\alpha(\Gamma(G))=\alpha(\Gamma_N(G))$.
\item $\omega(\Gamma(G))=r+\omega(\Gamma_N(G))$.
\item $\chi(\Gamma(G))=r+\chi(\Gamma_N(G))$.
\item $\gamma(G)=1$.
\end{enumerate}
\end{cor}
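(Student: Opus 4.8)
The plan is to deduce every item directly from Theorem~\ref{permutability nonabelian t1}, which under the hypothesis $r\neq|L(G)|-2$ gives the isomorphism $\Gamma(G)\cong K_r+\Gamma_N(G)$, combined with elementary facts about the join of two graphs. Throughout I would fix the notation $m:=|V(\Gamma_N(G))|=|L(G)|-2-r$ for the number of proper non-normal subgroups of $G$, and note at the outset that $m\geq 1$ precisely because $r\neq|L(G)|-2$; this is the only non-triviality-type fact needed, and it guarantees that $\Gamma_N(G)$ is non-empty. It is also convenient to record the three standard join identities $\alpha(G_1+G_2)=\max\{\alpha(G_1),\alpha(G_2)\}$, $\omega(G_1+G_2)=\omega(G_1)+\omega(G_2)$ and $\chi(G_1+G_2)=\chi(G_1)+\chi(G_2)$, each an immediate consequence of the fact that in $G_1+G_2$ every vertex of $G_1$ is adjacent to every vertex of $G_2$.

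For (i), a normal subgroup $H$ is a vertex of the $K_r$-part, hence in the join it is adjacent to the remaining $r-1$ vertices of $K_r$ and to all $m$ vertices of $\Gamma_N(G)$, so $\deg_{\Gamma(G)}(H)=(r-1)+m=|L(G)|-3$; a non-normal subgroup $H$ lies in the $\Gamma_N(G)$-part, adjacent to its $\deg_{\Gamma_N(G)}(H)$ neighbours there together with all $r$ vertices of $K_r$, giving $r+\deg_{\Gamma_N(G)}(H)$. For (ii) I would either sum the degrees from (i) and halve, or apply $|E(G_1+G_2)|=|E(G_1)|+|E(G_2)|+|V(G_1)|\,|V(G_2)|$ with $|E(K_r)|=\binom{r}{2}$; in both cases one is left with the routine simplification $\binom{r}{2}+rm=\tfrac{r}{2}\bigl(2|L(G)|-r-5\bigr)$, which gives the stated edge count.

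Items (iii)--(v) then follow from the join identities above: since $\alpha(K_r)=1\leq\alpha(\Gamma_N(G))$ (using $m\geq1$) we get $\alpha(\Gamma(G))=\alpha(\Gamma_N(G))$; since $\omega(K_r)=r$ we get $\omega(\Gamma(G))=r+\omega(\Gamma_N(G))$; and since $\chi(K_r)=r$ we get $\chi(\Gamma(G))=r+\chi(\Gamma_N(G))$. Finally, for (vi), by part (i) any normal-subgroup vertex $H$ satisfies $\deg_{\Gamma(G)}(H)=|L(G)|-3=|V(\Gamma(G))|-1$, i.e.\ $H$ is adjacent to every other vertex, so $\{H\}$ is a dominating set and $\gamma(\Gamma(G))=1$. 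I do not expect any genuine obstacle here: the whole argument is bookkeeping on the join decomposition, and the only points requiring a moment's care are verifying $m\geq1$ so that $\Gamma_N(G)$ contributes a vertex (needed for (iii)) and carrying out the arithmetic in (ii).
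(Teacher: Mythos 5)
Your proposal is correct and follows essentially the same route as the paper: both rest entirely on the join decomposition $\Gamma(G)\cong K_r+\Gamma_N(G)$ from Theorem~\ref{permutability nonabelian t1}, with the paper spelling out only the edge count (ii) via $\binom{r}{2}+r[|L(G)|-r-2]+|E(\Gamma_N(G))|$ and declaring the other parts immediate, while you simply make those immediate consequences explicit (degrees in a join, the standard $\alpha$, $\omega$, $\chi$ identities for joins, and a universal vertex for the domination number). Your observation that $r\neq|L(G)|-2$ guarantees $\Gamma_N(G)$ has a vertex, which is needed for (iii), is a small but legitimate point of care that the paper leaves implicit.
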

\begin{proof}
We prove part (ii) and the remaining parts of the result are immediate consequence of the previous theorem.  Note that the proper number
of non-normal subgroups of $G$ is $|L(G)|-r-2$. So by Theorem~\ref{permutability nonabelian t1},
\begin{equation*}\label{c111}
|E(\Gamma(G))|= {r \choose 2} +r[|L(G)|-r-2]+|E(\Gamma_N(G))|,
\end{equation*}
 which leads to the result by simplification.
\end{proof}

\section{Dihedral groups}\label{sec:dih}
The dihedral group of order $2n$ $ (n\geq 3)$ is defined by
\begin{equation*}
D_n=\langle a,b~|~a^n=b^2=1,ab=ba^{-1}\rangle.
\end{equation*}
 The subgroups of $D_n$ are listed below:
\begin{enumerate}[{\normalfont (i)}]
\item  cyclic groups $H_0^r:=\langle a^{\frac{n}{r}}\rangle$ of order $r$, where $r$ is a divisor of $n$;
\item  cyclic groups $H_i^1:=\langle ba^{i-1}\rangle$ of order $2$, where $i=1,2,\ldots,n$;
\item  dihedral groups $H_i^r:=\langle a^{\frac{n}{r}},ba^{i-1}\rangle$ of order $2r$, where $r$ is a divisor of $n$, $r\neq 1, n$ and $i=1,2,\ldots,\frac{n}{r}$.
\end{enumerate}
The proper normal subgroups of $D_n$ are the subgroups $H_0^r$,  $r\neq 1$ listed in (i), when $n$ is odd; the subgroups $H_0^r$, $r\neq 1$ listed in (i)
and $H_i^{\frac{n}{2}}$, $i=1,2$ of index 2, when $n$ is even. Thus
\begin{equation}\label{e60}
 |L(D_{n})|=\tau(n)+\sigma(n),
\end{equation}
\noindent and so
\begin{equation*}
|V(\Gamma(D_n)|=\tau(n)+\sigma(n)-2,
\end{equation*}
\begin{equation}\label{e110}
|V(\Gamma_N(D_n)|=
\begin{cases}
		\sigma(n)-1, & \mbox{if} ~n~  \mbox{is odd}; \\
		\sigma(n)-3, &   \mbox{otherwise.}
	\end{cases}
\end{equation}

The following details about the permutability of subgroups of $D_n$ were given by T$\check{a}$rn$\check{a}$uceanu in~\cite[p. 2513-2516]{Marius}.
We will use these to prove some of our results in this paper.
Consider the subgroups $H_i^r$ and $H_j^s$, where $r$ and $s$ are the divisor of $n$, $i \in \{1,2, \ldots , \frac{n}{r}\}$, $j \in \{ 1,2, \ldots, \frac{n}{s} \}$.
Then

 $H_i^rH_j^s=H_j^sH_i^r$ if and only if
$a^{2(i-j)}\in \langle a^{\frac{n}{[r,s]}}\rangle$, that is
\begin{equation}\label{e11}
\dfrac{n}{[r,s]}~\Big{|}~2(i-j).
\end{equation}
For a fixed divisor $r$ of $n$, and $i \in \{1,2, \ldots , \frac{n}{r}\}$, let $x_i^r$ denotes the number of solutions of~\eqref{e11}.
The value  of  $x_i^r$ is described explicitly in the following cases:

\noindent \textbf{Case 1.}  If $n$ is odd, then
 \begin{equation}\label{e17}
x_i^r= \displaystyle \sum_{s|n} \frac{[r,s]}{s}=r  \sum_{s|n} \frac{1}{(r,s)}
 \end{equation}

\noindent and so
\begin{equation}\label{e22}
 \sum_{r|n}\sum_{i=1}^{\frac{n}{r}}x_i^r=g(n),
\end{equation}
where $g$ is the function defined by $g(k)=k \displaystyle \sum_{r|k, s|k} \frac{1}{(r,s)}$, for all $k \in \mathds{N}$. Then $g$ is a multiplicative
function and
\begin{equation}\label{e181}
g(p^{\alpha})=\displaystyle  \frac {(2\alpha+1)p^{\alpha +2}-
(2\alpha +3)p^{\alpha +1}+ p +1}
{(p-1)^2}
\end{equation}

\noindent for any prime $p$  and $\alpha \in \mathds{N}$. If $n=p_1^{\alpha_1}p_2^{\alpha_2}\ldots p_k^{\alpha_k}$, where $p_i$'s are distinct
primes and $\alpha_i \geq 1$, then by~\eqref{e181}
\begin{equation}\label{e18}
g(n)=\displaystyle {\displaystyle \prod_{i=1}^k \frac {(2\alpha_i+1)p_i^{\alpha_i+2}-
(2\alpha_i+3)p_i^{\alpha_i+1}+p_i+1}
{(p_i-1)^2}}.
\end{equation}

\noindent \textbf{Case 2.} Let  $n$ be even.

\noindent \textbf{subcase 2a.} If $n=2^\alpha$, $\alpha\geq 2$, then
\begin{equation}\label{e50}
 x_i^r=2^{u+2}-2u+2\alpha-3
\end{equation}
for every $r=2^u$, where $0\leq u\leq \alpha-1$
\noindent and so
\begin{equation}\label{e51}
 \displaystyle \sum_{r|n} \sum_{i=1}^{\frac{n}{r}}x_i^r= (\alpha-1)2^{\alpha+3}+9.
\end{equation}

\noindent\textbf{subcase 2b.} If $n=2^\alpha n'$, with $n'$ is odd, $\alpha\geq 1$, then for any divisor $r$ of $n$ with $r=2^{\beta}r'$, where
$\beta \leq \alpha$ and $r'|n'$,
\begin{equation}\label{e13}
x_i^r =
	\begin{cases}
		(2^{\alpha+1}-1) x_i^{r'}, & \mbox{~~if } \beta=\alpha;
  \\
		(2^{\beta+2}-2\beta+2\alpha-3)x_i^{r'}, & \mbox{~~if } \beta<\alpha,
	\end{cases}
\end{equation} where $x_i^{r'}$ is given by~\eqref{e17} and so
\begin{equation}\label{e19}
\sum_{r|n}\sum_{i=1}^{\frac{n}{r}}x_i^r=[(\alpha-1)2^{\alpha+3}+9] g(n')
\end{equation}
In addition to these cases, for any integer $n \geq 3$, it is easy to see that

\begin{equation}\label{e12}
 x_1^n=\sigma(n)
\end{equation}
and for each $i \in \{1,2, \ldots , n\}$
\begin{equation}\label{e121}
 x_i^1=\tau(n).
\end{equation}
If $n$ is even, then for each $i=1,2$
\begin{equation}\label{e81}
x_i^{\frac{n}{2}}=\sigma(n)
\end{equation}

\subsection{Properties of $\Gamma_N(D_n)$}
In the following result, we describe the degrees of  the vertices of $\Gamma_N(G)$.
\begin{thm}\label{permutability nonabelian t40}
Let $n\geq 3$ be an integer.
\begin{enumerate}[{\normalfont (i)}]
\item  If $n$ is odd, then
$\deg_{\Gamma_N(D_n)}(H_i^r) =x_i^r-2$, for each divisor $r$ of $n$, $r\neq n$, $i=1,2,\ldots,\frac{n}{r}$, where $x_i^r$ is given by~\eqref{e17}.
\item  Let  $n$ be even. Then
 $\deg_{\Gamma_N(D_n)}(H_i^r)= x_i^r-4$, for each divisor $r$ of $n$, $r\neq n, \frac{n}{2}$, $i=1,2,\ldots,\frac{n}{r}$, where $x_i^r$ is given by~\eqref{e50}
if $n=2^\alpha$, $\alpha\geq 2$; $x_i^r$ is given by~\eqref{e13}
if $n=2^\alpha n'$ with $n'$ is odd, $\alpha\geq 1$.

\end{enumerate}
\end{thm}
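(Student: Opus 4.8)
The plan is to relate $\deg_{\Gamma_N(D_n)}(H_i^r)$ to the quantity $x_i^r$, which by definition counts the number of subgroups $H_j^s$ (over all divisors $s$ of $n$ and all admissible $j$) that permute with $H_i^r$. The key observation is that the adjacency relation in $\Gamma(D_n)$ among the ``dihedral-type'' and ``order-two'' subgroups $H_i^r$, $H_j^s$ is governed precisely by the congruence~\eqref{e11}, so $x_i^r$ is essentially the degree of $H_i^r$ in the permutability graph of \emph{all} proper subgroups, except that we must carefully account for (a) whether $H_i^r$ permutes with the cyclic subgroups $H_0^t = \langle a^{n/t}\rangle$, (b) the self-count ($H_i^r$ paired with itself), and (c) the passage from $\Gamma(D_n)$ to $\Gamma_N(D_n)$, which deletes the normal subgroups as vertices.

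First I would record that each $H_0^t = \langle a^{n/t}\rangle$ is normal in $D_n$, hence permutes with every subgroup; so every $H_i^r$ with $r \neq 1$ (and, trivially, also $r=1$) is adjacent in $\Gamma(D_n)$ to all such cyclic subgroups. I then need to check that the count $x_i^r$ from~\eqref{e11} already includes the pairs $(H_i^r, H_0^t)$: indeed $H_0^t$ is the case ``$H_j^s$ with the rotation part only,'' and one verifies that~\eqref{e11} is automatically satisfied in that situation, so these are counted. Next, $x_i^r$ counts $H_i^r$ itself once (taking $s=r$, $j=i$), which must be subtracted since a vertex is not adjacent to itself. This gives $\deg_{\Gamma(D_n)}(H_i^r) = x_i^r - 1$ when $n$ is odd. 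For the non-normal vertices of $\Gamma_N(D_n)$ we must then remove, from this degree, the edges running to the proper normal subgroups of $D_n$. When $n$ is odd the proper normal subgroups are exactly the $H_0^t$ with $t \mid n$, $t \neq 1$; but $t=n$ gives $H_0^n = \langle a\rangle$, which is also a vertex we must be careful about — actually $H_0^n$ is a proper subgroup of index $2$, hence normal, and is among the $r$ normal subgroups removed. Counting: the number of proper normal subgroups adjacent to (and distinct from) $H_i^r$ is $\tau(n) - 1$ in the odd case, but one must double-check against the already-excluded self-loop. Reconciling the bookkeeping so that the net effect is exactly ``$-2$'' (odd case) or ``$-4$'' (even case) is the crux.

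For the even case the argument is parallel but the normal-subgroup count changes: when $n$ is even there are the cyclic normals $H_0^t$ plus the two index-$2$ dihedral subgroups $H_1^{n/2}, H_2^{n/2}$, and the formulas~\eqref{e50} (for $n = 2^\alpha$) and~\eqref{e13} (for $n = 2^\alpha n'$) replace~\eqref{e17}; I would invoke~\eqref{e12}, \eqref{e121}, \eqref{e81} to pin down the relevant edge counts to the normal vertices, which is why the constant becomes $4$ rather than $2$ (one extra pair of index-$2$ normal subgroups, plus a shift in how $x_i^r$ tallies them). The main obstacle I anticipate is precisely this careful accounting: making sure that (i) $x_i^r$ as defined via~\eqref{e11} really does enumerate all permuting partners including the purely cyclic ones and itself, and (ii) subtracting off exactly the edges to normal vertices and the self-count yields the stated $-2$/$-4$ uniformly over all valid $r$ and $i$. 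Everything else — the structure of the subgroup lattice, the normality classification, the explicit values of $x_i^r$ — is supplied by the preceding material, so the proof reduces to this inclusion–exclusion on which partners of $H_i^r$ survive as vertices of $\Gamma_N(D_n)$.
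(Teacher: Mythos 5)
There is a genuine gap, and it sits exactly where you locate ``the crux.'' Your step (a) misreads what $x_i^r$ counts: in the counting quoted in Section~\ref{sec:dih}, $x_i^r$ is the number of solutions $(j,s)$ of \eqref{e11} with $s\mid n$ and $j\in\{1,2,\ldots,\frac{n}{s}\}$, so it enumerates only the subgroups $H_j^s=\langle a^{n/s},ba^{j-1}\rangle$ containing a reflection; the cyclic subgroups $H_0^t=\langle a^{n/t}\rangle$ are \emph{not} among the solutions. A quick sanity check: \eqref{e12} gives $x_1^n=\sigma(n)=\sum_{s\mid n}\frac{n}{s}$, which is exactly the total number of subgroups of the form $H_j^s$ and not that number plus $\tau(n)$; similarly $x_i^1=\tau(n)$ by \eqref{e121}. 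Because of this false premise, your intermediate claim ``$\deg_{\Gamma(D_n)}(H_i^r)=x_i^r-1$ for $n$ odd'' is wrong --- it contradicts Corollary~\ref{permutability nonabelian c24}(ii), which gives $\tau(n)+x_i^r-3$ --- and pushing your inclusion--exclusion through (subtract the self-count and then the $\tau(n)-1$, resp.\ $\tau(n)+1$, proper normal subgroups) would yield $x_i^r-\tau(n)$, resp.\ $x_i^r-\tau(n)-2$, not the stated $x_i^r-2$ and $x_i^r-4$; these agree only when $\tau(n)=2$. You also overlook the one further item that \emph{is} counted in $x_i^r$ but is not a vertex: the pair $s=n$, $j=1$ always satisfies \eqref{e11}, so $x_i^r$ counts the whole group $H_1^n=D_n$.

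The correct accounting, which is the paper's one-line proof, goes the other way around: $x_i^r$ equals the number of subgroups of the form $H_j^s$ permuting with $H_i^r$, and among these the only non-vertices of $\Gamma_N(D_n)$ are $H_i^r$ itself and $H_1^n=D_n$ when $n$ is odd (hence $\deg_{\Gamma_N(D_n)}(H_i^r)=x_i^r-2$), together with the two normal index-two subgroups $H_1^{n/2}$ and $H_2^{n/2}$ when $n$ is even (hence $x_i^r-4$); every remaining counted subgroup $H_j^s$ with $s\neq n$ (and $s\neq\frac{n}{2}$ in the even case) is a proper non-normal subgroup and thus a genuine neighbour, while the cyclic subgroups $H_0^t$ never enter the count at all. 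Finally, even on its own terms your write-up stops short: the decisive reconciliation producing the constants $-2$ and $-4$ is announced as the crux but never carried out, so the theorem is not actually derived.
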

\begin{proof}
One can observe that for each divisor $r$ of $n$ and $i\in \{1,2,\ldots,\frac{n}{r}\}$, the number of dihedral subgroups of $D_n$ permutes with $H_i^r$ is
$x_i^r$. So if $n$ is odd, then $\deg_{\Gamma_N(D_n)}(H_i^r)=x_i^r-|\{H_i^r, H_1^n\}|$ $=x_i^r-2$, where $x_i^r$ is given by \eqref{e17}.
If $n$ is even, then $\deg_{\Gamma_N(D_n)}(H_i^r)=x_i^r-|\{H_i^r, H_1^n, H_1^{\frac{n}{2}}, H_2^{\frac{n}{2}}\}|$ $=x_i^r-4$, where $x_i^r$ is given by \eqref{e50} and \eqref{e13}.
\end{proof}

\begin{cor}\label{permutability nonabelian c33}Let $n\geq 3$ be an integer and $g$ denotes the arithmetic function given by~\eqref{e18}.
\begin{enumerate}[{\normalfont (i)}]
\item If $n$ is odd, then
\begin{equation*}
|E(\Gamma_N(D_n))|=\displaystyle \frac {1}{2} \{g(n)-3\sigma(n)+2 \}.
\end{equation*}
\item  If $n=2^\alpha$, $\alpha\geq 2$, then
\begin{equation*}
|E(\Gamma_N(D_n))|=\displaystyle 2^\alpha(4\alpha-11)+14.
\end{equation*}
\item If $n=2^\alpha n'$, with $n'$ is odd, $\alpha\geq 1$, then
\begin{equation*}
|E(\Gamma_N(D_n))|=\displaystyle \frac{1}{2}\big \{[(\alpha-1)2^{\alpha+3}+9] g(n')-7\sigma(n)+12\big\}.
\end{equation*}

\end{enumerate}
\end{cor}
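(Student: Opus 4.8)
The plan is to compute $|E(\Gamma_N(D_n))|$ in each case by summing the vertex degrees obtained in Theorem~\ref{permutability nonabelian t40} and applying the handshaking identity $|E(\Gamma_N(D_n))| = \frac{1}{2}\sum_{v} \deg_{\Gamma_N(D_n)}(v)$. The vertices of $\Gamma_N(D_n)$ are precisely the non-normal subgroups of $D_n$, which by the subgroup list are the order-$2$ subgroups $H_i^1$ ($i=1,\dots,n$) and the dihedral subgroups $H_i^r$ ($r\mid n$, $r\neq 1,n$, and in the even case $r\neq \frac{n}{2}$ as well). So the degree sum is a sum of $x_i^r$ over exactly these vertices, minus a correction of $2$ (odd case) or $4$ (even case) per vertex.

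For part~(i), $n$ odd: here $\sum_{r\mid n}\sum_{i=1}^{n/r} x_i^r = g(n)$ by~\eqref{e22}. From this total I subtract the contributions of the subgroups that are \emph{not} vertices of $\Gamma_N(D_n)$, namely $H_0^r = H_1^r$ for each $r\mid n$ with $r\neq 1$ (these have the form $H_i^r$ with $i=1$; for $r=1$ the subgroup $H_0^1$ is trivial and not counted among proper subgroups either, so I must be careful that $g(n)$ as defined already sums only over the relevant index ranges). The cyclic normal subgroups $H_0^r$ contribute $x_1^r$ each; but rather than evaluate each $x_1^r$, I note that $H_0^n = D_n$ is the whole group — not a vertex — with $x_1^n = \sigma(n)$ by~\eqref{e12}, and the other $H_0^r$ with $r\mid n$, $r\neq 1,n$ are normal and excluded. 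The cleanest route: start from $g(n)$, subtract $\sum$ of $x_i^r$ over the $\tau(n)-2$ normal cyclic vertices plus the full group, then subtract $2$ for each of the $|V(\Gamma_N(D_n))| = \sigma(n)-1$ vertices. Combining these bookkeeping terms should collapse — using~\eqref{e121} and~\eqref{e12} and the fact that the normal cyclic subgroups together with $\{1\}$ and $D_n$ account for $\tau(n)$ of the $\tau(n)+\sigma(n)$ lattice elements — to the stated $\frac{1}{2}\{g(n)-3\sigma(n)+2\}$.

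For parts~(ii) and~(iii), $n$ even, I use~\eqref{e51} respectively~\eqref{e19} for the grand total $\sum_{r\mid n}\sum_i x_i^r$, then subtract the $x_i^r$-values of the non-vertices: the whole group $D_n$ with $x_1^n=\sigma(n)$ by~\eqref{e12}, the two index-$2$ dihedral subgroups $H_1^{n/2}, H_2^{n/2}$ each with $x_i^{n/2}=\sigma(n)$ by~\eqref{e81}, and the normal cyclic subgroups (whose count and $x$-values I track via~\eqref{e121} and the structure of the lattice); finally subtract $4$ for each of the $|V(\Gamma_N(D_n))| = \sigma(n)-3$ vertices. In the pure $2$-group case~(ii), $g(n')=g(1)=1$ and $\sigma(2^\alpha) = 2^{\alpha+1}-1$, so everything is an explicit polynomial in $2^\alpha$ and $\alpha$, and the claim $2^\alpha(4\alpha-11)+14$ should drop out after collecting terms; part~(iii) is the same computation with $g(n')$ and $\sigma(n)$ kept symbolic, giving $\frac12\{[(\alpha-1)2^{\alpha+3}+9]g(n') - 7\sigma(n) + 12\}$.

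The main obstacle is purely bookkeeping: getting the count of excluded (normal) vertices and their $x$-contributions exactly right, and making sure the ``$-2$ per vertex'' or ``$-4$ per vertex'' correction is applied to the correct vertex count $\sigma(n)-1$ or $\sigma(n)-3$ and not, say, to all of $|L(D_n)|$. A subtle point worth double-checking is that in the even non-$2$-group case the subgroups $H_i^{n/2}$ for $i>2$ (if any) are \emph{not} normal and hence remain vertices — only $H_1^{n/2}$ and $H_2^{n/2}$ are the index-$2$ ones removed — and similarly that when $\alpha=1$ the value $2^{\alpha}=2$ still yields a valid dihedral subgroup structure; one should verify the formulas degenerate correctly. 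Beyond this, the algebra is a routine simplification of rational/polynomial expressions in $p_i^{\alpha_i}$ (odd case) or in $2^\alpha$ (even case), and I do not anticipate any genuine difficulty once the inclusion–exclusion over normal subgroups is set up.
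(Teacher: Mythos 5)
Your overall strategy is the paper's: sum the degrees from Theorem~\ref{permutability nonabelian t40} via handshaking, use the totals \eqref{e22}, \eqref{e51}, \eqref{e19} together with the special values \eqref{e12}, \eqref{e81}, and simplify with $\sigma(2^\alpha)=2^{\alpha+1}-1$. But the inclusion--exclusion at the heart of your plan is set up incorrectly. The identity $\sum_{r\mid n}\sum_{i=1}^{n/r}x_i^r=g(n)$ runs \emph{only} over the subgroups $H_i^r=\langle a^{n/r},ba^{i-1}\rangle$ with $i\geq 1$, i.e.\ the reflections ($r=1$), the proper dihedral subgroups ($1<r<n$) and the whole group $H_1^n=D_n$; the cyclic subgroups $H_0^r=\langle a^{n/r}\rangle$ are never indexed by this scheme and contribute nothing to $g(n)$. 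Your claims ``$H_0^r=H_1^r$'' and ``$H_0^n=D_n$'' are false: $H_1^r$ is the dihedral subgroup of order $2r$ containing $b$ (a genuine non-normal vertex when $1<r<n$ and $n$ is odd), while $H_0^n=\langle a\rangle$ is cyclic and $H_1^n=D_n$. Consequently your plan to ``subtract the contributions of the normal cyclic subgroups'' from $g(n)$ is wrong twice over: there is nothing to subtract for them, and if you subtract $x_1^r$ for $r\neq 1,n$ you are deleting the degree contributions of actual vertices. Carried out literally, your odd-case computation gives $\tfrac12\bigl[g(n)-\sum_{r\mid n,\,r\neq 1}x_1^r-2(\sigma(n)-1)\bigr]$, which does not equal $\tfrac12\{g(n)-3\sigma(n)+2\}$; the assertion that the bookkeeping ``should collapse'' to the stated formula is exactly the step that is not justified and in fact fails. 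The same defect appears in your even-case plan (and citing \eqref{e121}, which concerns the reflections $H_i^1$, for cyclic subgroups is a symptom of the same confusion).

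The correct bookkeeping, as in the paper, is short: for $n$ odd the only member of the indexed family that is not a vertex is $H_1^n=D_n$, with $x_1^n=\sigma(n)$ by \eqref{e12}, so $\sum_{\text{vertices}}x_i^r=g(n)-\sigma(n)$ and, subtracting $2$ for each of the $\sigma(n)-1$ vertices, $2|E|=g(n)-3\sigma(n)+2$. For $n$ even the non-vertices in the family are $H_1^n$, $H_1^{n/2}$, $H_2^{n/2}$, each with $x$-value $\sigma(n)$ by \eqref{e12} and \eqref{e81}, so $2|E|=\sum_{r\mid n}\sum_i x_i^r-3\sigma(n)-4(\sigma(n)-3)=\sum_{r\mid n}\sum_i x_i^r-7\sigma(n)+12$, after which \eqref{e51} and \eqref{e19} give (ii) and (iii). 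Note also that for $r=n/2$ the index $i$ ranges only over $\{1,2\}$, so there are no further subgroups $H_i^{n/2}$ to worry about.
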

\begin{proof}
 {(i)}: If $n$ is odd, then by Theorem~\ref{permutability nonabelian t40}(i) and by using \eqref{e22}, \eqref{e12}, we have
\begin{align*}
|E(\Gamma_N(D_n))| &=\displaystyle \frac{1}{2}\sum_{H\in \Gamma_N(D_n)}\deg_{\Gamma_N(D_n)}(H)\\
&=\frac{1}{2}\sum_{\substack{r|n\\  r\neq n}}\sum_{i=1}^{\frac{n}{r}}(x_i^r-2)\\
&=\frac{1}{2}\big [\sum_{r|n}\sum_{i=1}^{\frac{n}{r}}(x_i^r-2)-(x_1^n-2)\big]\\
&=\frac{1}{2}\big [\sum_{r|n}\sum_{i=1}^{\frac{n}{r}}x_i^r-2\sum_{r|n}\sum_{i=1}^{\frac{n}{r}}1-(\sigma(n)-2)\big]\\
&=\frac{1}{2}\big [\sum_{r|n}\sum_{i=1}^{\frac{n}{r}}x_i^r-2\sigma(n)-(\sigma(n)-2)\big]\\
&=\frac{1}{2}\big [g(n)-3\sigma(n)+2]\big.
\end{align*}
\noindent{(ii)-(iii)}: By Theorem~\ref{permutability nonabelian t40}(ii) and by using \eqref{e12} and \eqref{e81}, we have
 \begin{align}\label{e52}
|E(\Gamma_N(D_n))|&=\displaystyle \frac{1}{2}\sum_{H\in \Gamma_N(D_n)}\deg_{\Gamma_N(D_n)}(H)\nonumber \\
&=\frac{1}{2}\sum_{\substack{r|n\\ r\neq n,\frac{n}{2}}}\sum_{i=1}^{\frac{n}{r}} (x_i^r-4)\nonumber \\
&=\frac{1}{2}\big[\sum_{r|n}\sum_{i=1}^{\frac{n}{r}}(x_i^r-4)-(x_1^n-4)-\sum_{r=\frac{n}{2}}\sum_{i=1}^{\frac{n}{r}}(x_i^r-4)\big]\nonumber \\
&=\frac{1}{2}\big[\sum_{r|n}\sum_{i=1}^{\frac{n}{r}}x_i^r-4\sum_{r|n}\sum_{i=1}^{\frac{n}{r}}1-(\sigma(n)-4)-2(\sigma(n)-4)\big]\nonumber \\
&=\frac{1}{2}\big[\sum_{r|n}\sum_{i=1}^{\frac{n}{r}}x_i^r-4\sigma(n)-(\sigma(n)-4)-2(\sigma(n)-4)\big]\nonumber\\
&=\frac{1}{2}\big [\sum_{r|n}\sum_{i=1}^{\frac{n}{r}}x_i^r-7\sigma(n)+12\big].
\end{align}
Now to complete the proof it remains to consider the following cases:

\noindent \textbf{Case a.} If $n=2^\alpha$, $\alpha\geq 2$, then \eqref{e52} reduces to the result after simplification, by using \eqref{e51} and  $\sigma(n)=2^{\alpha+1}-1$.

\noindent \textbf{Case b.} If $n=2^\alpha n'$, with $n'$ is odd and $\alpha\geq 1$, then \eqref{e52} reduces to the result after simplification, by using \eqref{e19}.
\end{proof}
In the following result, we determine
the values of $n$ for which $\Gamma_N(D_n)$ is Eulerian.
\begin{cor}\label{permutability nonabelian c39}
Let $n\geq 3$ be an integer.
\begin{enumerate}[{\normalfont (i)}]
\item If $n$ is odd with $n=p_1^{\alpha_1}p_2^{\alpha_2}\ldots p_k^{\alpha_k}$, where $p_i$'s are distinct primes and $\alpha_i\geq 1$,
then $\Gamma_N(D_n)$ is Eulerian if and only if $\alpha_i$ is odd for some $i \in \{1,2, \ldots , k\}$.
 \item If $n=2^{\alpha}$, $\alpha\geq 2$, then $\Gamma_N(D_n)$ is non-Eulerian.
\item If $n=2^\alpha n'$, with $n'=p_1^{\alpha_1}p_2^{\alpha_2}\ldots p_k^{\alpha_k}$ is odd, where $p_i$'s are distinct primes and $\alpha\geq 1$, then $\Gamma_N(D_n)$ is Eulerian if and only if $\alpha_i$ is
odd for some $i \in \{1,2, \ldots , k\}$.
\end{enumerate}
\end{cor}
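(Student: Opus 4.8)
The plan is to read off the vertex degrees of $\Gamma_N(D_n)$ from Theorem~\ref{permutability nonabelian t40} and to translate the Eulerian condition into a parity statement about the numbers $x_i^r$. Recall (Section~\ref{sec:pre}) that a graph is Eulerian precisely when every vertex has even degree. By Theorem~\ref{permutability nonabelian t40}, $\deg_{\Gamma_N(D_n)}(H_i^r)=x_i^r-2$ when $n$ is odd and $=x_i^r-4$ when $n$ is even; in both cases this has the same parity as $x_i^r$. Since $\Gamma_N(D_n)$ is non-empty for every $n\geq 3$ by~\eqref{e110}, it is Eulerian if and only if $x_i^r$ is even for every admissible pair $(r,i)$, i.e.\ for all divisors $r$ of $n$ with $r\neq n$ (and $r\neq\frac n2$ when $n$ is even) and all $i\in\{1,\dots,\frac nr\}$. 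Thus everything reduces to computing the parity of $x_i^r$.

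The main step is the parity of $x_i^r$ in the odd case. Write $n=p_1^{\alpha_1}\cdots p_k^{\alpha_k}$ odd and a divisor $r=p_1^{\beta_1}\cdots p_k^{\beta_k}$ with $0\leq\beta_j\leq\alpha_j$. Starting from~\eqref{e17} in the form $x_i^r=\sum_{s\mid n}[r,s]/s$, the map $s\mapsto[r,s]/s$ is multiplicative across the coprime prime-power components of $n$, and on a prime power $\sum_{\gamma=0}^{\alpha}p^{\max(\beta,\gamma)-\gamma}=(1+p+\cdots+p^{\beta})+(\alpha-\beta)$; hence
\[
x_i^r=\prod_{j=1}^{k}\Bigl((1+p_j+\cdots+p_j^{\beta_j})+(\alpha_j-\beta_j)\Bigr).
\]
Every $p_j$ is odd, so $1+p_j+\cdots+p_j^{\beta_j}\equiv\beta_j+1\pmod2$, and therefore the $j$-th factor is $\equiv\alpha_j+1\pmod2$ and $x_i^r\equiv\prod_{j=1}^k(\alpha_j+1)=\tau(n)\pmod2$, independently of $r$ and $i$. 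Combining this with the reduction above and Note~\ref{n1}(i) gives (i): for odd $n$, $\Gamma_N(D_n)$ is Eulerian iff $\tau(n)$ is even iff some $\alpha_i$ is odd.

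For the even cases, the structural constants occurring in~\eqref{e50} and~\eqref{e13}, namely $2^{u+2}-2u+2\alpha-3$, $2^{\alpha+1}-1$ and $2^{\beta+2}-2\beta+2\alpha-3$, are all odd in the relevant ranges of $u,\beta,\alpha$. So when $n=2^{\alpha}$ with $\alpha\geq2$, every $x_i^r$ (with $r=2^u$, $0\leq u\leq\alpha-1$) is odd, hence every vertex of $\Gamma_N(D_n)$ has odd degree; since $\Gamma_N(D_n)$ has $\sigma(2^\alpha)-3\geq4$ vertices, it is non-Eulerian, giving (ii). When $n=2^{\alpha}n'$ with $n'=p_1^{\alpha_1}\cdots p_k^{\alpha_k}$ odd, write an admissible divisor as $r=2^{\beta}r'$ with $r'\mid n'$; by~\eqref{e13}, $x_i^r$ is an odd multiple of $x_i^{r'}$, where $x_i^{r'}$ is the quantity~\eqref{e17} for the odd number $n'$, so the previous paragraph (applied to $n'$) gives $x_i^{r'}\equiv\tau(n')\pmod2$, hence $x_i^r\equiv\tau(n')\pmod2$ for every admissible $r$. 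With the reduction above and Note~\ref{n1}(i) this gives (iii): $\Gamma_N(D_n)$ is Eulerian iff $\tau(n')$ is even iff some $\alpha_i$ is odd.

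I expect the only real obstacle to be the multiplicative bookkeeping that produces the displayed product formula for $x_i^r$ in the odd case (handling $\max(\beta,\gamma)-\gamma$ and splitting the inner sum at $\gamma=\beta$). Once that formula is in hand the rest is immediate: reduce mod $2$ and quote Note~\ref{n1}(i) for (i) and (iii), and simply observe the oddness of the constants in~\eqref{e50} and~\eqref{e13} for (ii).
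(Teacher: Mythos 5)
Your proof is correct and follows essentially the same route as the paper: reduce the Eulerian condition to the parity of $x_i^r$ via Theorem~\ref{permutability nonabelian t40}, then invoke Note~\ref{n1}(i). The only difference is cosmetic — in the odd case the paper gets $x_i^r\equiv\tau(n)\pmod 2$ immediately by noting that each summand $[r,s]/s$ in~\eqref{e17} is odd, so the sum has the parity of the number of its $\tau(n)$ terms, whereas you derive the same congruence from an explicit multiplicative product formula.
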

\begin{proof}
 (i): By Theorem~\ref{permutability nonabelian t40}(i),
$\deg_{\Gamma_N(D_n)}(H_i^r)$  is even if and only if $\displaystyle \sum_{s|n}\frac{[r,s]}{s}$  is even. Since $n$ is odd, $\displaystyle \frac{[r,s]}{s}$
is odd for each divisor $r$ and $s$ of $n$. So it follows that $\displaystyle \sum_{s|n}\frac{[r,s]}{s}$ is even only when  $\tau(n)$ is even;
this is true only when $\alpha_i$ is odd, for some $i \in \{1,2, \ldots , k\}$.

\noindent{(ii)}: By Theorem~\ref{permutability nonabelian t40}(ii), $\deg_{\Gamma_N(D_n)}(H_i^r)$ is odd, for every $\alpha$ and so $\Gamma_N(D_n)$ is non-Eulerian.

\noindent{(iii)}: By Theorem~\ref{permutability nonabelian t40}(ii), $\deg_{\Gamma_N(D_n)}(H_i^r)$ is even if and only if
$x_i^r$ is even; that is when $x_i^{r'}$ is even, where
$x_i^{r'}$ is given by~\eqref{e17}. But for each divisor $r'$ of $n'$, $\frac{[r',s']}{s'}$ is always odd. So $x_i^{r'}$is even  if and only if
$\tau(n')$ is even; that is only when $\alpha_i$ is odd, for some $i$.
\end{proof}

Now we make further investigation to know more about the structure of $\Gamma_N(D_n)$. If $n$ is odd, then for each divisor $r$ of $n$, $r\neq n$, let $\mathcal A_r^o=\{H_i^r~|~i=1,2,\ldots,\frac{n}{r}\}$. It is easy to see that $\mathcal A_r^o$ is
a maximal independent set in $\Gamma_N(D_n)$. Also these sets are mutually disjoint and
 \begin{equation*}
 \Gamma_N(D_n)= \displaystyle \bigcup_{\substack{r|n\\  r\neq n}} \mathcal A_r^o.
 \end{equation*}
Also note that the number of such $A_r^o$ is $\tau(n)-1$.

\noindent Let $n=2^\alpha n'$ be even, with $n'$ is odd and
$\alpha \geq 1$. Then for every divisor $r$ of $n$ with $\frac{n}{r}$ is even and $r\neq \frac{n}{2}$, let
$\mathcal A_r^e:=\{H_i^r~|~i=1,2,\ldots, \frac{n}{2r}\}$ and $\mathcal B_r^e:=\{H_i^r~|~i=\frac{n}{2r}+1, \frac{n}{2r}+2,
\ldots, \frac{n}{r}\}$. For every divisor $r$ of $n$ with $\frac{n}{r}$ is odd and $r\neq n$, let $\mathcal C_r^e:=\{H_i^r~|~i=1,2,\ldots, \frac{n}{r}\}$.
 Here each of $\mathcal A_r^e$, $\mathcal B_r^e$
and $\mathcal C_r^e$ forms a maximal independent set in $\Gamma_N(D_n)$. Also these three class of sets are mutually disjoint and
\begin{equation*}
 \Gamma_N(D_n)= \displaystyle \bigcup_{\substack{r|n\\ r \neq n\\ \frac{n}{r} ~is~odd }} \mathcal C_r^e \cup \bigcup_{\substack{r|n\\  r \neq \frac{n}{2}\\ \frac{n}{r}~is~even}} (\mathcal A_r^e\cup \mathcal B_r^e).
  \end{equation*}

The number of divisors $r$ of $n$ such that $r\neq \frac{n}{2}$ with $\frac{n}{r}$ is even is $\alpha\tau(n')-1$. Each of these divisors gives rise
to two partition namely, $A_r^e$ and $B_r^e$ in $V(\Gamma_N(D_n))$. The number of  divisors $r$ of $n$ such that $r\neq n$ with $\frac{n}{r}$ is odd is
$\tau(n')-1$. Each of these divisors gives rise
to exactly one partition namely, $C_r^e$ in $V(\Gamma_N(D_n))$. Thus in total, we have $2(\alpha\tau(n')-1)+\tau(n')-1=(2\alpha+1)\tau(n')-3$ partitions of
$V(\Gamma_N(D_n))$.
\begin{thm}\label{t33}
Let $n\geq 3$ be an integer. Then
$\Gamma_N(D_n)$ is  $(\tau(n)-1)$-partite if $n$ is odd; $((2\alpha+1)\tau(n')-3)$-partite if $n=2^\alpha n'$, with $n'$ is odd, $\alpha \geq 1$. But $\Gamma_N(D_n)$ is not a complete partite graph for any $n$.
\end{thm}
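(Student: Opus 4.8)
The plan is to exhibit an explicit partition of $V(\Gamma_N(D_n))$ into independent sets of the required number, and then separately rule out completeness of the partite structure. For the partite claim, almost all of the work has already been done in the paragraphs preceding the statement: when $n$ is odd, the sets $\mathcal A_r^o$ ($r\mid n$, $r\neq n$) are stated to be maximal independent sets, mutually disjoint, and to cover $V(\Gamma_N(D_n))$, and there are $\tau(n)-1$ of them; when $n=2^\alpha n'$, the sets $\mathcal A_r^e,\mathcal B_r^e,\mathcal C_r^e$ play the same role and number $(2\alpha+1)\tau(n')-3$ in total. So for part one I would simply invoke these facts: a graph that decomposes as a disjoint union of $m$ independent sets is by definition $m$-partite, giving the two stated values. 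The one point deserving a line of justification is why each $\mathcal A_r^o$ (resp.\ $\mathcal A_r^e$, etc.) is independent: two distinct vertices $H_i^r,H_j^r$ in the same class have the same $r=s$, so $[r,s]=r$ and condition~\eqref{e11} becomes $\frac{n}{r}\mid 2(i-j)$; since $1\le i,j\le \frac{n}{r}$ (and in the even case the index ranges are restricted precisely to half that length), $2|i-j|<\frac{n}{r}$ when $\frac nr$ is odd, and $2(i-j)$ avoids being a multiple of $\frac nr$ on the chosen half-ranges when $\frac nr$ is even — hence no edges within a class.

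For the non-complete-partite claim I would argue by producing, for every $n$, two vertices that lie in different classes of the partition yet are non-adjacent; this shows the partition is not complete multipartite, and since any $m$-partition of these graphs into independent sets must (one checks) collapse to a coarsening of the canonical one, no partition makes it complete multipartite. Concretely: take a prime divisor $p$ of $n$ and consider $H_1^p=\langle a^{n/p}\rangle$ — a cyclic subgroup, hence normal-ish behavior — against a reflection subgroup $H_i^1=\langle ba^{i-1}\rangle$ in a different class. Using~\eqref{e11} with $r=p$, $s=1$: adjacency requires $\frac{n}{[p,1]}=\frac np \mid 2(1-i)$, which fails for a suitable choice of $i$ (e.g.\ $i=2$ when $\frac np\nmid 2$, i.e.\ $\frac np>2$; and when $n=2p$ or $p=n$ one adjusts by using a dihedral subgroup $H_i^r$ with $r\ne 1,p$ instead). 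A cleaner uniform choice: take two reflection subgroups $H_1^1$ and $H_i^1$ with $\frac n1=n\nmid 2(1-i)$; these are distinct order-$2$ subgroups, both vertices of $\Gamma_N(D_n)$, and by~\eqref{e11} non-adjacent, yet they can be assigned to different classes — actually in the canonical partition they sit in the \emph{same} class $\mathcal A_1^o$ (or $\mathcal A_1^e\cup\mathcal B_1^e$), so for the completeness obstruction I instead want a non-edge \emph{across} classes, which is why the $H_1^p$ versus $H_i^1$ comparison is the right one. The main obstacle is handling the small/degenerate exponents: $n=p$ prime (where $D_p$ has no nontrivial cyclic normal subgroup other than... actually $H_0^p$ is the whole group, excluded), $n=4$, $n=2p$, and in general making sure the "witness" vertices genuinely exist as proper non-normal subgroups and genuinely fall in distinct classes. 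I would dispatch these by a short case check, using that $\Gamma_N(D_n)$ is only defined when $D_n$ is non-Dedekind (so $n\ge 3$ and, for $n=3$, one verifies directly that the three reflection subgroups form a $\overline{K_3}$ plus isolated-ish structure which is trivially not complete multipartite).

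In summary the key steps in order are: (1) recall the canonical decomposition of $\Gamma_N(D_n)$ from the preceding paragraphs and count its classes to get $\tau(n)-1$ (odd case) and $(2\alpha+1)\tau(n')-3$ (even case), establishing the partite numbers; (2) verify independence of each canonical class via~\eqref{e11} with $r=s$; (3) for non-completeness, exhibit across-class non-adjacent pairs — a cyclic subgroup $H_1^p$ and a reflection $H_i^1$ with $\frac np\nmid 2(1-i)$ — and note that any independent-set partition of the graph refines to the canonical one so it cannot be complete multipartite; (4) clean up the finitely many small cases $n=3,4,2p,\dots$ by inspection. I expect step (3), specifically the "any partition refines the canonical one / choosing a genuinely cross-class non-edge" bookkeeping, to be the only real subtlety; steps (1), (2), (4) are routine.
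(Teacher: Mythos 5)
Your proposal follows essentially the same route as the paper: the partite numbers are read off from the canonical decomposition into the mutually disjoint maximal independent sets $\mathcal A_r^o$ (resp.\ $\mathcal A_r^e,\mathcal B_r^e,\mathcal C_r^e$) described just before the theorem, and non-complete-partiteness is shown by exhibiting a cross-class non-edge, with the odd-prime case (where the graph is $\overline{K}_p$ by Theorem~\ref{t35}) treated separately --- exactly the paper's argument.

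One correction to your witness, though: in the paper's notation $\langle a^{n/p}\rangle$ is $H_0^p$, a cyclic \emph{normal} subgroup, hence not a vertex of $\Gamma_N(D_n)$ at all; the vertex $H_1^p$ is the dihedral subgroup $\langle a^{n/p},b\rangle$, and it is for this subgroup that your computation via \eqref{e11} (adjacency to $H_i^1$ iff $\frac{n}{p}\mid 2(1-i)$) is the right criterion. Read this way, your cross-class non-edge works for composite $n$ (switching to $r=2$ when $n=2p$, as you indicate), and it is the same kind of witness as the paper's: $H_1^1$ versus $H_2^r$ for odd composite $n$, while for even $n$ the paper takes the simpler pair $H_1^1$ and $H_{2+\frac n2}^1$, two reflections lying in the two distinct classes $\mathcal A_1^e$ and $\mathcal B_1^e$. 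Finally, your ``any partition refines the canonical one'' step is cleaner if phrased through the standard fact that in a complete multipartite graph the parts are precisely the maximal independent sets; since the canonical classes are maximal independent sets partitioning $V(\Gamma_N(D_n))$, a single cross-class non-edge already rules out complete multipartiteness --- a point the paper also leaves implicit.
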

\begin{proof}
Partiteness of $\Gamma_N(D_n)$ follows from the above discussions. Now we prove the last part of the theorem.
Let $n=p_1^{\alpha_1}p_2^{\alpha_2}\ldots p_k^{\alpha_k}$, where $p_i$'s are distinct primes and $\alpha_i\geq 1$.

\noindent\textbf{Case 1.} $n$ is odd. If $k=1$ and $\alpha_1=1$, then by Theorem~\ref{t35}, $\Gamma_N(D_n)$ is not complete partite graph. Otherwise,
let $H_1^1\in \mathcal A_1$ and $H_2^r\in \mathcal A_r$, where $r$ is a divisor of $n$, $r\neq n$. Here $H_1^1$ does not permutes with $H_2^r$ and so
$\Gamma_N(D_n)$ is not a complete partite graph.

\noindent\textbf{Case 2.} $n$ is even. Let $H_1^1\in \mathcal A_1$ and $H_{2+\frac{n}{2}}^1\in \mathcal A_{1+\frac{n}{2}}$. Here $H_1^1$ does not
permute with $H_{2+\frac{n}{2}}^1$ and so $\Gamma_N(D_n)$ is not a complete partite graph.
\end{proof}
\begin{thm}\label{t35}
Let $n\geq 3$ be an integer. Then
 $\Gamma_N(D_n)$ is totally disconnected if and only if $n$ is an odd prime.
\end{thm}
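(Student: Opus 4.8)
The statement to prove is: $\Gamma_N(D_n)$ is totally disconnected (i.e., has no edges) if and only if $n$ is an odd prime. The plan is to handle the two directions separately, relying on the subgroup list of $D_n$ and on the permutability criterion~\eqref{e11}.

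First I would handle the easy direction. Suppose $n=p$ is an odd prime. By the description of subgroups of $D_p$, the only proper subgroups are $H_0^p=\langle a\rangle$ of order $p$ and the $p$ subgroups $H_i^1=\langle ba^{i-1}\rangle$ of order $2$; there are no dihedral subgroups $H_i^r$ with $r\neq 1,p$. Since $n=p$ is odd, $H_0^p$ is normal, hence is not a vertex of $\Gamma_N(D_p)$. Thus $V(\Gamma_N(D_p))=\{H_1^1,\dots,H_p^1\}$, and all these have order $2$. For any two distinct such subgroups $H_i^1$ and $H_j^1$, applying~\eqref{e11} with $r=s=1$ gives the condition $\frac{n}{[1,1]}=n \mid 2(i-j)$; since $0<|i-j|<n$ and $n$ is odd, $n\nmid 2(i-j)$, so $H_i^1$ and $H_j^1$ do not permute. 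Hence $\Gamma_N(D_p)$ has no edges, i.e., it is totally disconnected.

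For the converse I would prove the contrapositive: if $n$ is not an odd prime, then $\Gamma_N(D_n)$ has at least one edge. Split into cases. If $n$ is even, then (for $n\geq 4$) one can exhibit a non-normal subgroup that permutes with some other subgroup — for instance a reflection subgroup $H_i^1$ together with a suitable dihedral subgroup $H_j^r$ (taking $r$ a proper divisor of $n$, $r\neq 1$, which exists since $n\geq 4$ is composite; when $n=4$ one checks $\langle b\rangle$ and $\langle ba^2\rangle$ sit inside the Klein four subgroup and permute), using~\eqref{e11} to verify $\frac{n}{[r,s]}\mid 2(i-j)$ for an appropriate choice of indices. If $n$ is odd but composite, write $n=mn''$ with $1<m<n$; then $D_n$ has non-normal dihedral subgroups $H_i^m$ of order $2m$, and two of them (e.g. $H_1^m$ and $H_j^m$ with $j\equiv 1 \pmod{n/(2m)}$ appropriately, or $H_1^m$ together with a reflection $H_i^1$ contained in it) satisfy the divisibility in~\eqref{e11}, giving an edge. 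Alternatively, and more cleanly, one can invoke the degree formula of Theorem~\ref{permutability nonabelian t40} or the edge-count of Corollary~\ref{permutability nonabelian c33}: for $n$ odd and composite the function $g(n)$ grows strictly faster than $3\sigma(n)-2$, so $|E(\Gamma_N(D_n))|=\frac12(g(n)-3\sigma(n)+2)>0$; this disposes of the odd case entirely and reduces the problem to the single even argument.

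The main obstacle is the converse direction: one must be careful that the "obvious" candidate edges really exist as edges — i.e., that both endpoints are genuinely non-normal (so that they are vertices of $\Gamma_N(D_n)$ at all) and that~\eqref{e11} is genuinely satisfied for the chosen indices. In particular, the small even cases ($n=4$, and checking that $n=2$ is excluded since $D_n$ requires $n\geq 3$) and the fact that $H_i^1$ is always non-normal when $n\geq 3$ need a quick separate verification. Once the candidate subgroups and indices are pinned down, the verification via~\eqref{e11} is a routine divisibility check, so I do not expect any deeper difficulty.
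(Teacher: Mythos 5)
Your proposal is correct and follows essentially the same route as the paper: verify directly via~\eqref{e11} that no two order-$2$ subgroups permute when $n$ is an odd prime, and otherwise exhibit an explicit permuting pair of non-normal subgroups (the paper uses $H_1^1$ and $H_{1+\frac{n}{2}}^1$ for $n$ even, $H_1^p$ with $H_1^1$ for $n=p^\alpha$, $\alpha>1$, and $H_1^{p_1^{\alpha_1}}$ with $H_1^{p_2^{\alpha_2}}$ for $n$ odd with at least two prime factors). One small caution: for $n$ odd your first suggested witness in the composite case, a pair $H_1^m$, $H_j^m$ with the \emph{same} $m$, never works, since~\eqref{e11} with $r=s=m$ and $\frac{n}{m}$ odd forces $j=1$ (and $n/(2m)$ is not even an integer); but your fallback choices --- a dihedral $H_1^m$ together with a reflection $H_i^1$ contained in it, which permute trivially --- do the job, so the argument stands.
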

\begin{proof}
 Let $n=p_1^{\alpha_1}p_2^{\alpha_2}\ldots p_k^{\alpha_k}$, where $p_i$'s are distinct primes and $\alpha_i\geq 1$.

\noindent\textbf{Case 1.} $k=1$.

\noindent\textbf{subcase 1.} $n$ is odd. If $\alpha_1=1$, then no $H_i^1$ permutes with any $H_j^1$, for every $i, j\in \{1,2,\ldots,p\}$, $i\neq j$ and so
$\Gamma_N(D_p)\cong \overline{K}_p$. If $\alpha>1$, then $H_1^p$, $H_1^1$ permutes. So $\Gamma_N(D_n)$ contains an edge.

\noindent\textbf{subcase 2.} $n$ is even. Then $H_1^1$ and $H_{1+\frac{n}{2}}^1$ permutes. Therefore $\Gamma_N(D_n)$ contains an edge.

\noindent\textbf{Case 2.} $k\geq 2$. If $n$ is odd, then $H_1^{p^{\alpha_1}}$ and $H_1^{p^{\alpha_2}}$ permutes and so $\Gamma_N(D_n)$ contains an edge.
If $n$ is even, then $H_1^1$, $H_{1+\frac{n}{2}}^1$ permutes and so $\Gamma_N(D_n)$ contains an edge. Combaining all the cases together gives the result.
\end{proof}
%
%

\begin{thm}\label{t37}
Let $n\geq 3$ be an integer.

\begin{enumerate}[{\normalfont (i)}]
\item $\alpha(\Gamma_N(D_n))=
 	\begin{cases}
 		n, & \mbox{if~} n \mbox{~is odd};  \\
 		\frac{n}{2}, & \mbox{if~} n \mbox{~is even.}
 	\end{cases}$
\item $\gamma(\Gamma_N(D_n))= p$, where $p$ is the smallest prime factor of $n$.
\item $\omega(\Gamma_N(D_n))=\chi(\Gamma_N(D_n))=
 	\begin{cases}
 		\tau(n)-1,&\mbox{if} ~n~ \mbox{is odd};  \\
 		(2\alpha+1)\tau(n')-3, & \mbox{if~} n=2^\alpha n', \mbox{with~} n' \mbox{~is odd.}
  	\end{cases}$

In particular, $\Gamma_N(D_n)$ is weakly perfect.
\end{enumerate}
\end{thm}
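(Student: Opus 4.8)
The three parts all rest on the explicit permutability criterion~\eqref{e11} together with the families of maximal independent sets $\mathcal A_r^o$ (for $n$ odd) and $\mathcal C_r^e,\mathcal A_r^e,\mathcal B_r^e$ (for $n$ even) introduced just before Theorem~\ref{t33}. For part~(i) the lower bound is immediate: when $n$ is odd the set $\mathcal A_1^o=\{H_i^1\mid 1\le i\le n\}$ is independent, since by~\eqref{e11} two of these permute only if $n\mid 2(i-j)$, and when $n$ is even $\mathcal A_1^e=\{H_i^1\mid 1\le i\le n/2\}$ is independent for the same reason. For the upper bound I would exhibit a clique cover of the claimed size: for a fixed index $i$, let $C_i$ be the set of all vertices of $\Gamma_N(D_n)$ of the form $H_i^r$, which is a clique because the congruence in~\eqref{e11} is vacuous when the two indices coincide. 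If $n$ is odd, the $n$ cliques $C_1,\dots,C_n$ cover $V(\Gamma_N(D_n))$, so $\alpha\le n$. If $n$ is even, one first notes (using $\tfrac{n}{[r,s]}\mid n$ in~\eqref{e11}) that $H_i^r$ and $H_{i+n/2}^s$ always permute, so each $C_i\cup C_{i+n/2}$ is a clique, and the $n/2$ such cliques for $i=1,\dots,n/2$ cover $V(\Gamma_N(D_n))$, giving $\alpha\le n/2$. Together with the independent sets above this proves~(i).

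For part~(iii) I would squeeze $\omega$ and $\chi$ between a common value $k$, where $k=\tau(n)-1$ for $n$ odd and $k=(2\alpha+1)\tau(n')-3$ for $n=2^\alpha n'$. Theorem~\ref{t33} gives that $\Gamma_N(D_n)$ is $k$-partite, whence $\chi\le k$; and since the $k$ classes of that partition are independent sets, any clique contains at most one vertex from each, whence $\omega\le k$. It then suffices to display a clique of size $k$. For $n$ odd, $C_1=\{H_1^r\mid r\mid n,\ r\neq n\}$ works, having $\tau(n)-1$ elements. For $n$ even, choose one vertex from each partition class: $H_1^r$ from each $\mathcal C_r^e$, and the pair $H_1^r\in\mathcal A_r^e$, $H_{1+n/(2r)}^r\in\mathcal B_r^e$ from the classes attached to divisors $r$ with $\tfrac nr$ even. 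A short check with~\eqref{e11} shows these are pairwise permuting: for two chosen vertices the index difference is $0$, $-\tfrac{n}{2s}$, or $\tfrac{n}{2r}-\tfrac{n}{2s}$, and in each case $\tfrac{n}{[r,s]}$ divides twice that difference, using $s\mid[r,s]$ and $\gcd(r,s)\mid(s-r)$ respectively. This gives a clique meeting every class, of size $(2\alpha+1)\tau(n')-3$. Hence $\omega=\chi=k$, and in particular $\Gamma_N(D_n)$ is weakly perfect.

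For part~(ii) I would again argue by a two-sided estimate, this time focussed on the $n$ order-two subgroups $H_i^1$. By~\eqref{e11}, $H_j^r$ is adjacent or equal to $H_i^1$ exactly when $\tfrac nr\mid 2(i-j)$, so when $n$ is odd a single vertex ``covers'' at most $\tfrac np$ of the $H_i^1$ (the largest admissible $r$ being $n/p$, the largest proper divisor of $n$), forcing any dominating set to have at least $p$ members; thus $\gamma(\Gamma_N(D_n))\ge p$. For the matching upper bound when $n$ is odd, I expect $\{H_i^{\,n/p}\mid 1\le i\le p\}$ to be dominating: for any vertex $H_j^s$ one has $[\,n/p,s\,]\in\{n/p,n\}$, so either $H_j^s$ is joined to all $p$ of them, or the congruence $p\mid(j-i)$ selects a unique $i\in\{1,\dots,p\}$. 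The step I anticipate as the main obstacle is the even case: the obvious candidate $\{H_1^{n/2},H_2^{n/2}\}$ is not available because those index-two subgroups are \emph{normal}, and one is pushed down to $\{H_1^{n/4},H_2^{n/4}\}$; this succeeds when $4\mid n$ (then every $[\,n/4,s\,]$ lies in $\{n/4,n/2,n\}$, so $\tfrac{n}{[n/4,s]}\in\{1,2,4\}$ and for each vertex a suitable $i\in\{1,2\}$ can be found), but when $n\equiv 2\pmod 4$ there is no non-normal dihedral subgroup that large, so this case must be handled separately using the explicit degree data of~\eqref{e50} and~\eqref{e13}. I regard isolating this last case, and reconciling it with the claimed value of $\gamma$, as the principal difficulty.
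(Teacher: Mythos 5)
Your parts (i) and (iii) are correct and, for (iii), follow essentially the paper's route (squeeze $\omega$ and $\chi$ between the $k$-partite bound of Theorem~\ref{t33} and an explicit transversal clique, whose pairwise permutability you verify exactly as needed). For (i) your argument is actually tighter than the paper's: the paper only observes that $\mathcal A_1^o$ (resp.\ $\mathcal A_1^e$) is the largest class in the partition into maximal independent sets, which by itself does not bound $\alpha$ from above, since an independent set could draw vertices from several classes. Your clique covers $\{C_i\}_{i=1}^{n}$ (resp.\ $\{C_i\cup C_{i+n/2}\}_{i=1}^{n/2}$) supply the missing upper bound, and the verification via \eqref{e11} is sound.

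For part (ii), the difficulty you flag in the even case is not something you failed to see how to resolve: the statement is false for $n\equiv 2\pmod 4$. By the paper's own Theorem~\ref{permutability nonabelian t36}(iii), $\Gamma_N(D_{2p})\cong pK_3$ for an odd prime $p$, and a graph with $p$ components has domination number at least $p$, so $\gamma(\Gamma_N(D_6))=3\neq 2$. More generally, for $n=2m$ with $m>1$ odd, a single vertex $H_j^r$ dominates at most $n/q$ of the $n$ subgroups $H_i^1$, where $q$ is the smallest odd prime factor of $n$, so $\gamma\geq q\geq 3$. The paper's own proof of this part breaks precisely here: it takes $d=n/2$ as ``the largest divisor'' and proposes $\mathcal A_d=\{H_1^{n/2},H_2^{n/2}\}$ as a dominating set of size $2$, but those subgroups are normal in $D_n$ and are not vertices of $\Gamma_N(D_n)$ at all. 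Your treatment of the odd case is correct and in fact repairs a weakness in the paper's argument: the paper only checks that $\mathcal A_{n/p}$ is a \emph{minimal} dominating set (no vertex can be deleted), which does not establish minimum cardinality, whereas your count --- each vertex dominates at most $n/p$ of the $n$ subgroups $H_i^1$ --- gives the genuine lower bound $\gamma\geq p$. Your set $\{H_1^{n/4},H_2^{n/4}\}$ correctly settles the case $4\mid n$, so the theorem as stated holds exactly for $n$ odd and $n\equiv 0\pmod 4$.
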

\begin{proof}
\noindent{(i):} 
As explained earlier, $V(\Gamma_N(D_n))$ is the disjoint union of maximal independent sets. If $n$ is odd, the maximal independent set $\mathcal A_1^o$ in $\Gamma_N(D_n)$ has the maximal cardinality with $|\mathcal A_1^o|=n$. If $n$ is even, the maximal independent sets $\mathcal A_1^e$ and $\mathcal B_1^e$ in $\Gamma_N(D_n)$ both attains the maximal cardinality with $|\mathcal A_1^e|= \frac{n}{2}=|\mathcal B_1^e|$.

\noindent{(ii):} Note that $\omega(\Gamma_N(D_n))\leq \chi(\Gamma_N(D_n))$, for every $n$.

\noindent\textbf{Case 1.} $n$ is odd. Then $\mathcal A:=\{H_1^r~|~r|n$, $r\neq n\}$ is a clique set in $\Gamma_N(D_n)$. So
$\omega(\Gamma_N(D_n))\geq |\mathcal A|=\tau(n)-1$. By Theorem~\ref{t33}, $\Gamma_N(D_n)$ is a $\tau(n)-1$ partite graph. So $\chi(\Gamma_N(D_n))\leq \tau(n)-1$.
Thus $\tau(n)-1\leq \omega(\Gamma_N(D_n))\leq \chi(\Gamma_N(D_n))\leq \tau(n)-1$. Therefore, $\omega(\Gamma_N(D_n))=\chi(\Gamma_N(D_n))=\tau(n)-1$.

\noindent\textbf{Case 2.} $n=2^\alpha p_2^{\alpha_2}\ldots p_k^{\alpha_k}$, where $p_i$'s are distinct primes and $\alpha$, $\alpha_i\geq 1$.


 Then Let $\mathcal A:=\{H_1^r, H_{1+\frac{n}{2r}}^r~|~r$ is a divisor
of $n$ with $\frac{n}{r}$ is even, $r\neq n$, $2^{\alpha-1}p_2^{\alpha_2}\ldots p_k^{\alpha_k}\}\cup \{H_1^r~|~ r$ is a divisor of $n$ with $\frac{n}{r}$
is odd and $r\neq n\}$ forms a clique set in $\Gamma_N(D_n)$.

In this case, $\omega(\Gamma_N(D_n))\geq |\mathcal A|=(2\alpha+1)\tau(n')-3$. By Theorem~\ref{t33}, $\Gamma_N(D_n)$ is a
$(2\alpha+1)\tau(n')-3$ partite graph and so $\chi(\Gamma_N(D_n))\leq (2\alpha+1)\tau(n')-3$.
Thus $(2\alpha+1)\tau(n')-3\leq \omega(\Gamma_N(D_n))\leq \chi(\Gamma_N(D_n))\leq (2\alpha+1)\tau(n')-3$.
Therefore, $\omega(\Gamma_N(D_n))=\chi(\Gamma_N(D_n))=(2\alpha+1)\tau(n')-3$.

Weakly perfectness of $\Gamma_N(D_n)$ follows by the above two cases.

\noindent{(iii):} By Theorem~\ref{t33}, $\Gamma_N(D_n)$ is a partite graph and every partition of $V(\Gamma_N(D_n))$ is an maximal independent set. Among these maximal independent sets, $\mathcal A_d$ is a maximal independent set in $\Gamma_N(D_n)$ with minimum cardinality, where $d$ is the largest divisor of $n$.
It is well known that in a graph any maximal independent set is a dominating set.
Consequently, $\mathcal A_d$ is an independent dominating set of $\Gamma_N(D_n)$ with minimum cardinality, where $d$ is the largest divisor of $n$.

If  $n=2^{\alpha}p_1^{\alpha_1}p_2^{\alpha_2}\ldots p_k^{\alpha_k}$ is even, where $p_i$'s are distinct primes and $\alpha$, $\alpha_i\geq 1$, then $d=2^{\alpha-1}p_2^{\alpha_2}\ldots,p_k^{\alpha_k}$ is the largest divisor of $n$ and so $|\mathcal A_d|=2$. Hence
$\gamma(\Gamma_N(D_n))\leq 2$. Suppose $\gamma(\Gamma_N(D_n))=1$, then there exists a subgroup in $D_n$, which permutes with every other subgroups of $D_n$, which is not possible, since by Theorem~\ref{t33}, $\Gamma_N(D_n)$ is a partite graph with every partition has at least two vertices. So $\gamma(\Gamma_N(D_n))=2$.

 Let $n=p_1^{\alpha_1}p_2^{\alpha_2}\ldots p_k^{\alpha_k}$ be odd, where $p_i$'s are distinct primes and $\alpha_i\geq 1$. With out loss of generality we assume that $p_1<p_2 < \ldots < p_k$.  Then $d=p_1^{\alpha_1-1}p_2^{\alpha_2}\ldots p_k^{\alpha_k}$ is the largest divisor of $n$ and so $|\mathcal A_d|=p_1$. Thus $\gamma(\Gamma_N(D_n))\leq p_1$. We show that, $\mathcal A_d$ is a dominating set with minimal cardinality. Now for any $i\in \{1,2,\ldots,\frac{n}{d}\}$, we consider $\mathcal A_d-\{H_i^d\}$. Then no vertex in $\mathcal A_d-\{H_i^d\}$ permutes with $H_i^1$. It follows that $p_1-1<\gamma(\Gamma_N(D_n))\leq p_1$. Therefore, $\gamma(\Gamma_N(D_n))=p_1$.
\end{proof}

In the next result, we give the structure of $\Gamma_N(D_n)$ for some values of $n$.

\begin{thm}\label{permutability nonabelian t36}
Let $n\geq 3$ be an integer.

\begin{enumerate}[{\normalfont (i)}]
\item If $n=2^\alpha$, $\alpha\geq2$, then
\begin{align}\label{e71}
\Gamma_N(D_n)\cong 2 (\Gamma_N(D_{\frac{n}{2}})+K_2)
\end{align}
 with $\Gamma_N(D_{2^2})\cong 2K_2$.
\item If $n=p^{\alpha}$, where $p$ is a prime, $p>2$, $\alpha\geq1$, then
\begin{align}\label{e72}
\Gamma_N(D_n)\cong p (\Gamma_N(D_{\frac {n}{p}})+K_1)
\end{align}
 with $\Gamma_N(D_p)\cong \overline{K}_p$.
\item If $n=2p$, where $p$ is a prime, $p>2$, then
\begin{align}\label{e73}
\Gamma_N(D_n)\cong pK_3.
\end{align}
\item If $n=pq$, where $p, q$ are distinct primes, $2<p<q$, then
\begin{align}\label{e74}
 \Gamma_N(D_n)\cong \displaystyle\bigcup_{i=1}^{p}\bigcup_{j=1}^{q} \mathcal{G}_{ij},
\end{align}
 where for each $i=1,2,\ldots,p$, $j=1,2,\ldots,q$, $\mathcal{G}_{ij}$ is the complete
graph with vertex set $\{u_i,v_j,w_{ij}\}$.
\end{enumerate}
\end{thm}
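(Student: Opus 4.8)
My plan is to treat the four parts by the same underlying mechanism: the vertex set of $\Gamma_N(D_n)$ splits into blocks indexed by the non-normal subgroups sitting ``above'' a fixed divisor, and the permutability condition~\eqref{e11}, namely $\frac{n}{[r,s]}\mid 2(i-j)$, controls exactly which blocks are completely joined and which pairs of vertices inside a block are adjacent. So first I would record the general structural lemma implicit in the discussion preceding Theorem~\ref{t33}: the non-normal subgroups of $D_n$ are precisely the $H_i^r$ with $r\mid n$, $r\neq n$ (and $r\neq n/2$ when $n$ is even), together with the $H_i^1$; and two such, $H_i^r$ and $H_j^s$, permute iff~\eqref{e11} holds. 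I would also note the ``coset-type'' partition already used for $\alpha(\Gamma_N(D_n))$: for each relevant $r$ the set $\{H_i^r\}$ breaks into residue classes of $i$ modulo $\frac{n}{[r,s]}$ for the various $s$, and in particular $H_i^r$ and $H_j^r$ are non-adjacent (same block) while vertices over coprime ``levels'' are always adjacent because then $[r,s]=rs$ could still fail — so one must be careful and actually check the divisibility rather than guess.

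For part (ii), with $n=p^\alpha$, I would induct on $\alpha$. The base case $\alpha=1$: every proper nontrivial subgroup of $D_p$ is an $H_i^1$ of order $2$, no two of which permute (their product has order a multiple of $p$), giving $\Gamma_N(D_p)\cong\overline{K}_p$. For the inductive step, partition the vertex set according to the value of $i\bmod p$ — equivalently, group the $H_i^r$ by which of the $p$ ``branches'' $\langle a^{p^{\alpha-1}},ba^{i-1}\rangle$ they lie in. Using~\eqref{e11} I would show: (a) two vertices in different branches never permute, so the graph is a disjoint union of $p$ pieces; (b) within one branch, the subgroups are in bijection with the subgroups of $D_{p^{\alpha-1}}$ together with one extra subgroup $H_i^{p^{\alpha-1}}$ of order $2p^{\alpha-1}$ which is non-normal in $D_n$ but whose presence permutes with everything in its branch — that is the $+K_1$; and the induced graph on the rest is exactly $\Gamma_N(D_{p^{\alpha-1}})$. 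Part (i) is the same argument with $n=2^\alpha$, except that now there are two ``branches'' (indices $i$ even vs.\ odd), the extra vertex in each branch is replaced by a $K_2$ because the two index-$2$-in-the-branch subgroups $H_1^{n/2},H_2^{n/2}$ are normal in $D_n$ \emph{only} when... wait — they \emph{are} removed as normal, so instead the two extra non-normal subgroups of order $2^{\alpha-1}$ survive and mutually permute, yielding $+K_2$; base case $\Gamma_N(D_4)\cong 2K_2$ by direct inspection (four subgroups of order $2$, pairing up). I would verify the ``$+K_2$ vs.\ $+K_1$'' discrepancy carefully against~\eqref{e110}.

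For part (iii), $n=2p$: list the subgroups — $H_0^2=\langle a^p\rangle$ (normal), $H_0^p=\langle a^2\rangle$ (normal), the $H_i^1$ of order $2$ (the $p+1$ reflections-and-center... actually $p$ reflections $ba^{i-1}$ plus $\langle a^p\rangle$), and the dihedral $H_i^p$ of order $2p$ which are of index... no, of order $2p=n$, excluded; so the non-normal ones are exactly the $p$ subgroups $H_i^2=\langle a^p, ba^{i-1}\rangle$ of order $4$ for suitable $i$, plus... I would enumerate with~\eqref{e110}: $|V(\Gamma_N(D_{2p}))|=\sigma(2p)-3=3(p+1)-3=3p$, and~\eqref{e11} groups them into $p$ mutually non-adjacent triangles, giving $pK_3$. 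Part (iv), $n=pq$ with $2<p<q$: here $|V|=\sigma(pq)-1=(p+1)(q+1)-1=pq+p+q$, and the non-normal subgroups are the $pq$ reflections $H_i^1$, the $q$ groups $H_i^p$, and the $p$ groups $H_j^q$; I would show via~\eqref{e11} that $H_{ij}^1$ (reindexed by residues mod $p$ and mod $q$) permutes with exactly one $H_i^q$ and one $H_j^p$, and those two permute with each other, while no two reflections and no $H^p$–$H^p$ or $H^q$–$H^q$ pair permute, producing the $pq$ disjoint triangles $\mathcal G_{ij}$.

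The main obstacle I anticipate is bookkeeping the index ranges and the residue-class bijections cleanly — in particular getting the ``$+K_1$'' versus ``$+K_2$'' distinction in (i)/(ii) right (it comes from whether the top-of-branch subgroup(s) of the quotient picture are normal or non-normal in the ambient $D_n$), and making sure the claimed isomorphism $\Gamma_N(D_{2p})\cong pK_3$ is consistent with the vertex count $3p$ from~\eqref{e110} rather than off by the normal subgroups. Once the ``branch decomposition'' lemma is stated precisely and~\eqref{e11} is applied mechanically, each of the four cases is a finite verification; I would present (ii) and (i) first (the inductive ones), then deduce (iii) as essentially the $\alpha=1$ instance of a $2p$ analogue and (iv) by the same two-index splitting.
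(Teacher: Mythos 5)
Your treatment of parts (i)--(iii) follows essentially the same route as the paper: decompose the non-normal subgroups into the $p$ (resp.\ $2$) maximal dihedral ``branches'' $H_i^{p^{\alpha-1}}$, observe via \eqref{e11} that subgroups in distinct branches never permute, identify the induced graph on the proper non-normal subgroups of a branch with $\Gamma_N(D_{n/p})$, and account for the extra universally adjacent vertices in each branch ($K_1$ for odd $p$, namely the branch itself, which is non-normal in $D_n$ and permutes with everything it contains; $K_2$ for $p=2$, namely the two dihedral subgroups of index $2$ in the branch, which are normal there but not in $D_n$, while the branch itself is normal in $D_n$ and hence absent). Your hedging about ``$+K_1$ versus $+K_2$'' resolves to the correct reason, and (iii) is the same direct enumeration the paper uses.

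Part (iv), however, ends with a genuine error: $\Gamma_N(D_{pq})$ is \emph{not} ``$pq$ disjoint triangles.'' The triangles $\mathcal{G}_{ij}$ overlap: $u_i=H_i^q$ lies in $\mathcal{G}_{ij}$ for every $j$, and $v_j=H_j^p$ lies in $\mathcal{G}_{ij}$ for every $i$; the union in \eqref{e74} is the graph union defined in the preliminaries (common vertices identified), not a disjoint union. Indeed $pq$ disjoint triangles would require $3pq$ vertices, whereas you yourself computed $|V(\Gamma_N(D_{pq}))|=pq+p+q$. Concretely, since $[q,p]=pq=n$, condition \eqref{e11} for $H_i^q$ versus $H_j^p$ reads $1\mid 2(i-j)$, so \emph{every} $H_i^q$ permutes with \emph{every} $H_j^p$; the $u_i$'s and $v_j$'s therefore span a complete bipartite subgraph and the whole graph is connected (compare Figure 1 for $D_{15}$). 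The correct conclusion is that $\Gamma_N(D_{pq})$ is the overlapping union of the $pq$ triangles $\{u_i,v_j,w_{ij}\}$, where $w_{ij}=H_{m_{ij}}^1$ is the unique reflection subgroup, produced by the Chinese Remainder Theorem, contained in both $H_i^q$ and $H_j^p$; only the $w_{ij}$'s are private to their triangle. Getting this structure right matters downstream, for instance in the Hamiltonicity analysis of $\Gamma(Q_{pq})$.
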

\begin{proof}

\noindent(i): If $\alpha=2$, then $H_i^1$, $i=1,2,3,4$ are the only non-normal subgroups of $D_n$.
Also $H_i^1$ and $H_{i+2}^1$
 permutes with each other for each $i\in \{1,2\}$; no two remaining subgroups permutes. It follows that, $\Gamma_N(D_n)\cong 2K_2$.
 Now we consider $\alpha>2$.
For each $i=1,2$, let $\mathcal G_i$ denotes the subgraph of $\Gamma_N(D_n)$ induced by the proper non-normal subgroups in
$H_i^{2^{\alpha-1}}$. Clearly $\Gamma_N(D_n)\cong \mathcal G_1\cup \mathcal G_2$. Since for each $i=1,2$ $H_i^{2^{\alpha-1}}\cong D_{2^{\alpha-1}}$, so we have
\begin{equation}\label{e100}
\Gamma_N(H_i^{2^{\alpha-1}})\cong \Gamma_N(D_{2^{\alpha-1}}).
\end{equation} For each $i=1,2$, $H_i^{2^{\alpha-2}}$ and $H_{i+2}^{2^{\alpha-2}}$ permutes in $D_n$ and they are normal in $H_i^{2^{\alpha-1}}$;
but are not normal in $D_n$. So by using~\eqref{e100}, for each $i=1,2$ we have
\begin{equation*}
\mathcal G_i\cong \Gamma_N(H_i^{2^{\alpha-1}})+K_2\cong \Gamma_N(D_{2^{\alpha-1}})+K_2.
\end{equation*}
Here no vertex of $\mathcal G_1$ is adjacent with any vertex of $\mathcal G_2$. Thus $\Gamma_N(D_n)$ is the disjoint union of $\mathcal G_1$ and $\mathcal G_2$.

 \noindent (ii): If $\alpha=1$, then $H_i^1$, $i=1,2,\dots,p$ are the only non-normal
subgroups of $D_n$. No two distinct subgroups of
this type permutes. Therefore, $\Gamma_N(D_n)\cong \overline{K}_p$.
Now we consider $\alpha>1$. For each $i=1,2,\ldots,p$, let $\mathcal G_i$ denotes the subgraph of $\Gamma_N(D_n)$ induced by the proper
non-normal subgroups in $H_i^{p^{\alpha-1}}$.
Clearly $\Gamma_N(D_n)\cong \displaystyle \bigcup_{i=1}^{p} \mathcal G_i$. Since for each $i=1,2,\dots,p$, $H_i^{p^{\alpha-1}}\cong D_{p^{\alpha-1}}$,
so we have
\begin{equation}\label{e101}
 \Gamma_N(H_i^{p^{\alpha-1}})\cong \Gamma_N(D_{p^{\alpha-1}}).
\end{equation}
 Here
$H_i^{p^{\alpha-1}}$, $i=1,2,\ldots,p$ permutes with all its proper subgroups. Further no $H_i^{\alpha-1}$ permutes with any $H_j^{\alpha-1}$ and
its subgroups for all $i$, $j=1,2,\ldots,p$, $i\neq j$. So by using~\eqref{e101}, for each $i=1,2,\ldots,p$, we have
\begin{equation*}
 \mathcal G_i\cong \Gamma_N(H_i^{p^{\alpha-1}})+K_1
\cong \Gamma_N(D_{p^{\alpha-1}})+K_1.
\end{equation*}
So $\Gamma_N(D_n)$ is the disjoint union of $\mathcal G_i$'s.\\
\noindent{(iii)}: If $n=2p$, then $H_i^2$, $i=1,\ldots,p$ and $H_i^1$, $i=1,2,\ldots,2p$ are the only non-normal
subgroups of $D_n$. Also for each $i\in \{1,\ldots,p\}$, $H_i^1$, $H_{i+p}^1$ are subgroups of $H_i^2$;
they permutes with each other; no $H_i^2$ permutes with $H_j^2$ for every $i\neq j$. It follows that
$\Gamma_N(D_n)\cong pK_3$.

\noindent{(iv)}: If $n=pq$, $2 < p < q$, then $H_i^q$, $i=1,\ldots,p$, $H_j^p$, $j=1,\ldots,q$ and $H_k^1$,
 $k=1,\ldots,pq$ are the only non-normal subgroups of $D_n$. Here for each $i\in \{1,\ldots,p\}$, $j\in \{1,\ldots,q\}$, $H_i^q$
permutes with $H_j^p$; no two subgroups of the form $H_i^q$ permutes with each other. For each
$i\in \{1,\ldots,p\}$, $j\in \{1,\ldots,q\}$, by Chinese Remainder Theorem, there exist a positive integer, say $m_{ij}$ such that $m_{ij}\equiv
i \mod{p}$ and $m_{ij}\equiv j \mod{q}$. It follows that, $H_{m_{ij}}^1$ is the unique proper subgroup of $H_i^q$ and
 $H_j^p$ for every $i=1,2,\ldots,p$, $j=1,2,\ldots,q$. So for each $i\in \{1,\ldots,p\}$, $j\in \{1,\dots,q\}$, $H_i^q$,
$H_j^p$, $H_{m_{ij}}^1$
 permutes with each other and hence they form the complete graph, say $\mathcal G_{ij}$ as a subgraph of $\Gamma_N(D_n)$ with
vertex set $\{H_i^q, H_j^p, H_{m_{ij}}^1\}$. Thus $\Gamma_N(D_n)$ is the union of these $\mathcal G_{ij}$'s.
\end{proof}

\begin{example}\label{e222}
In Figure 1, we exhibit the structure of $\Gamma_N(D_{15})$ described in Theorem~\ref{permutability nonabelian t36}\normalfont{(iv)}.
   \begin{figure}[ ht ]
   \begin{center}
    \includegraphics[scale=0.8]{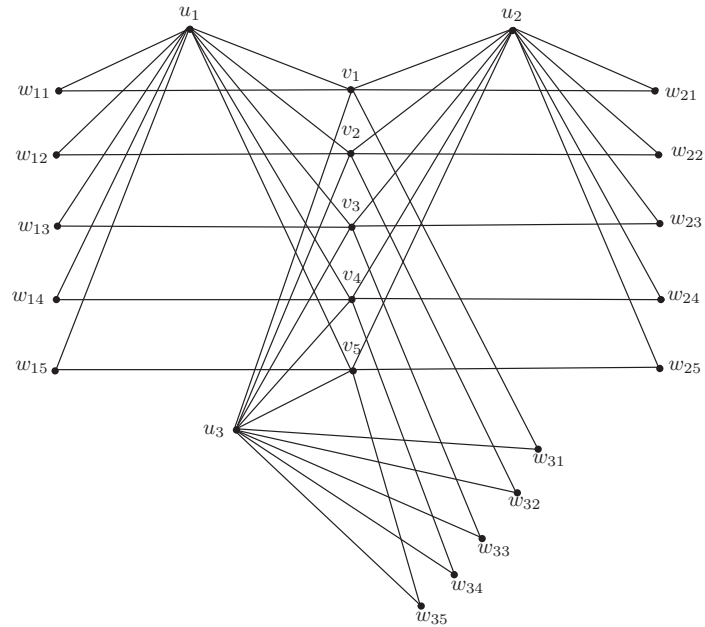}
   \caption{ The graph $\Gamma_N(D_{15})$}
   \end{center}
   \end{figure}
 \end{example}

\begin{thm}\label{l7}
 If $n\geq 3$ is a deficient number, then $\Gamma_N(D_n)$ is non-Hamiltonian.
\end{thm}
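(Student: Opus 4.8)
The plan is to exhibit, for every deficient $n\ge 3$, a set $S$ of vertices that violates the necessary condition of Theorem~\ref{l3}, whence $\Gamma_N(D_n)$ is non-Hamiltonian. Write $n=2^{\alpha}n'$ with $n'$ odd. If $n'=1$, i.e.\ $n=2^{\alpha}$ with $\alpha\ge 2$, then Theorem~\ref{permutability nonabelian t36}(i) gives $\Gamma_N(D_n)\cong 2\bigl(\Gamma_N(D_{n/2})+K_2\bigr)$, which is disconnected and hence non-Hamiltonian; so from now on assume $n'\ge 3$. This case also covers every odd $n\ge3$ (take $\alpha=0$, $n'=n$).

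The key is to take
\[
S:=\{\,H_i^r\in V(\Gamma_N(D_n)) : r\text{ is not a power of }2\,\},
\]
so that the vertices \emph{outside} $S$ are exactly the non-normal $2$-subgroups $H_i^{2^{\beta}}$ with $0\le\beta\le\alpha$ and $1\le i\le n/2^{\beta}$, of which there are $\sum_{\beta=0}^{\alpha}n/2^{\beta}=(2^{\alpha+1}-1)n'$. Deleting $S$ forces the remaining graph to break up along residues modulo $n'$: if $H_i^{2^{\beta}}$ and $H_j^{2^{\gamma}}$ are adjacent in $\Gamma_N(D_n)-S$, then by~\eqref{e11} we have $2^{\alpha-\max(\beta,\gamma)}n'=\dfrac{n}{[2^{\beta},2^{\gamma}]}\mid 2(i-j)$, and since the left-hand side is a multiple of the odd integer $n'$ we get $n'\mid(i-j)$. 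Thus every edge of $\Gamma_N(D_n)-S$ joins vertices whose lower indices agree modulo $n'$; as the $n$ order-$2$ subgroups $H_1^1,\dots,H_n^1$ all lie outside $S$ and their indices run through all residue classes mod $n'$, each of the $n'$ classes is a nonempty union of components, so $c(\Gamma_N(D_n)-S)\ge n'$.

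It then suffices to show $|S|<n'$: if $S=\emptyset$ the bound $c(\Gamma_N(D_n)-S)\ge n'\ge3$ already shows $\Gamma_N(D_n)$ is disconnected, and if $S\ne\emptyset$ it contradicts Theorem~\ref{l3}. By~\eqref{e110}, $|V(\Gamma_N(D_n))|$ equals $\sigma(n)-1$ or $\sigma(n)-3$ according as $n$ is odd or even, and $\sigma(n)=(2^{\alpha+1}-1)\sigma(n')$; subtracting the count $(2^{\alpha+1}-1)n'$ of vertices outside $S$ gives $|S|=(2^{\alpha+1}-1)\bigl(\sigma(n')-n'\bigr)-\varepsilon$ with $\varepsilon\in\{1,3\}$. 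Finally, inserting $\sigma(n)=(2^{\alpha+1}-1)\sigma(n')$ and $n=2^{\alpha}n'$ into $\sigma(n)<2n$ and subtracting $(2^{\alpha+1}-1)n'$ from both sides rewrites deficiency as exactly $(2^{\alpha+1}-1)(\sigma(n')-n')<n'$; hence $|S|<n'-\varepsilon<n'\le c(\Gamma_N(D_n)-S)$, as needed.

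I expect the genuine obstacle to be the choice of $S$: the naive set "all non-normal dihedral subgroups" leaves only $n/2$ components when $n$ is even, and that does not beat $|S|$ once $n$ is merely deficient (rather than a prime power or $2p$); one is forced to retain the whole chain $H_i^1,H_i^2,\dots,H_i^{2^{\alpha}}$ of non-normal $2$-subgroups in the complement, after which the leftover graph really does split into $n'$ pieces while $S$ stays small. Once $S$ is fixed, the rest is routine: the congruence computation behind the mod-$n'$ splitting, the vertex count via~\eqref{e110}, the one-line rearrangement of "$\sigma(n)<2n$", and the bookkeeping for the degenerate situations ($n$ a power of $2$, treated first, and $S=\emptyset$, which occurs precisely when $n$ is an odd prime or $n=2p$).
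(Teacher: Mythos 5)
Your proof is correct, and for odd $n$ it coincides with the paper's: there the paper takes $S=V(\Gamma_N(D_n))-\mathcal A_1$, so that the deleted graph is the independent set of the $n$ reflections and $|S|=\sigma(n)-n-1<n=c(\Gamma_N(D_n)-S)$ follows at once from $\sigma(n)<2n$. Where you genuinely diverge is the even case, and your instinct about "the genuine obstacle" is exactly right: when $n$ is even the set $\{H_1^1,\dots,H_n^1\}$ is not independent (antipodal reflections $H_i^1$ and $H_{i+n/2}^1$ permute), so the paper's deletion leaves only a perfect matching with $n/2$ components against $|S|=\sigma(n)-n-3$, and deficiency alone no longer forces $|S|<n/2$ (e.g.\ $n=44$ is deficient but $|S|=37>22$). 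The paper's one-line proof therefore really only establishes the odd case, which is all that is used later (Corollary~\ref{permutability nonabelian c14} invokes it for $n=pq$ with $p,q$ odd). Your choice of $S$ — delete exactly the $H_i^r$ with $r$ not a power of $2$, keep the full $2$-power tower, and split the remainder into $n'$ residue classes via $n'\mid(i-j)$ from~\eqref{e11} — repairs this and proves the theorem as stated for all deficient $n$, with the prime-power and $S=\emptyset$ degeneracies correctly handled by disconnectedness. The arithmetic $(2^{\alpha+1}-1)(\sigma(n')-n')<n'$ as the exact reformulation of deficiency checks out.
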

\begin{proof}
 Let $S=V(\Gamma_N(D_n))-\mathcal A_1$. Since $n$ is an deficient number, we have
$|S|=\sigma(n)-(n+1)<n=|\mathcal A_1|=c(\Gamma_N(D_n)-S)$ and so by Theorem~\ref{l3},
$\Gamma_N(D_n)$ is non-Hamiltonian.
\end{proof}

\begin{cor}\label{permutability nonabelian c14}
 Let $p$, $q$ be two distinct primes. If $n$ is one of the following: $n=2^{\alpha}$, $\alpha\geq2$,
 $n=p^{\alpha}$, $\alpha\geq 1$, $n=2p$, $p>2$, or $n=pq$, $2< p< q$, then $\Gamma_N(D_n)$ is non-Hamiltonian.
\end{cor}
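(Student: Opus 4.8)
The plan is to read the conclusion off directly from the explicit descriptions of $\Gamma_N(D_n)$ obtained in Theorem~\ref{permutability nonabelian t36}. For each of the four listed families that theorem displays $\Gamma_N(D_n)$ as a disjoint union of at least two non-empty graphs: it is $2(\Gamma_N(D_{n/2})+K_2)$ when $n=2^{\alpha}$, $\alpha\geq 2$ (with base graph $\Gamma_N(D_4)\cong 2K_2$); it is $p(\Gamma_N(D_{n/p})+K_1)$ when $n=p^{\alpha}$, $p$ odd, $\alpha\geq 1$ (with base graph $\Gamma_N(D_p)\cong\overline{K}_p$); it is $pK_3$ when $n=2p$, $p>2$; and it is $\bigcup_{i,j}\mathcal G_{ij}$, a union of $pq$ copies of $K_3$, when $n=pq$, $2<p<q$. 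In every case $c(\Gamma_N(D_n))\geq 2$, so $\Gamma_N(D_n)$ is disconnected, and a disconnected graph on more than one vertex cannot be Hamiltonian (a Hamiltonian cycle is a connected spanning subgraph). Hence $\Gamma_N(D_n)$ is non-Hamiltonian in all four cases.

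If one prefers to stay with the criterion recorded in Theorem~\ref{l3}, the same structural information suffices: choosing $S=\{v\}$ for a single vertex $v$ of $\Gamma_N(D_n)$, the graph $\Gamma_N(D_n)-S$ retains all of the (at least one) other components untouched and contributes at least one more component from the component of $v$, so that $c(\Gamma_N(D_n)-S)\geq 2>1=|S|$, contradicting Theorem~\ref{l3}. Either route gives the result, and neither involves any computation beyond invoking Theorem~\ref{permutability nonabelian t36}.

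There is essentially no obstacle here; the one point deserving a word of care is that, although most of these $n$ are deficient and so are already covered by Theorem~\ref{l7}, the value $n=6$ (the case $p=3$ of the family $n=2p$) is a perfect number and therefore lies outside the scope of Theorem~\ref{l7}. This is precisely why the corollary is proved from the structural description of Theorem~\ref{permutability nonabelian t36} rather than being quoted as an immediate consequence of deficiency, and it is the only step where one must be slightly attentive.
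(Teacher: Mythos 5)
Your argument works for the first three families and there it coincides with the paper's proof: for $n=2^{\alpha}$, $n=p^{\alpha}$ ($p$ odd) and $n=2p$, Theorem~\ref{permutability nonabelian t36} exhibits $\Gamma_N(D_n)$ as a genuinely disjoint union ($2(\cdot)$, $p(\cdot)$, $pK_3$), so disconnectedness settles those cases, and your remark that $n=6$ is perfect rather than deficient correctly explains why one cannot simply cite Theorem~\ref{l7} across the board.

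However, your treatment of the fourth family $n=pq$, $2<p<q$, contains a genuine error. The graphs $\mathcal G_{ij}$ in Theorem~\ref{permutability nonabelian t36}(iv) are \emph{not} vertex-disjoint: $\mathcal G_{ij}$ has vertex set $\{u_i,v_j,w_{ij}\}$, so $\mathcal G_{i1},\dots,\mathcal G_{iq}$ all share the vertex $u_i$ (the subgroup $H_i^q$) and $\mathcal G_{1j},\dots,\mathcal G_{pj}$ all share $v_j$ (the subgroup $H_j^p$). Indeed $H_i^q$ permutes with $H_j^p$ for every $i,j$, so the $u_i$'s and $v_j$'s span a complete bipartite subgraph and $\Gamma_N(D_{pq})$ is \emph{connected} — a quick sanity check is the vertex count $p+q+pq=\sigma(pq)-1$, not $3pq$, and Figure~1 for $n=15$ shows a connected graph. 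Consequently neither the ``disconnected graphs are not Hamiltonian'' argument nor your fallback $S=\{v\}$, $c(G-S)\geq 2>1$ applies in this case. The paper closes this case differently: $pq$ is deficient (since $1+p+q<pq$ for $2<p<q$), so Theorem~\ref{l7} applies; concretely, deleting $S=V(\Gamma_N(D_{pq}))-\mathcal A_1$ with $|S|=\sigma(pq)-1-pq=p+q$ leaves the $pq$ mutually non-permuting order-$2$ subgroups as isolated vertices, and $p+q<pq$ violates the necessary condition of Theorem~\ref{l3}. You need to replace your $n=pq$ paragraph with an argument of this kind.
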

\begin{proof}
 If $n$ is one of the following:
 $n=2^{\alpha}$, $\alpha\geq2$, $n=p^\alpha$, $\alpha\geq 1$, or $n=2p$, $p>2$, then by~\eqref{e71}, \eqref{e72}, and \eqref{e73}, respectively
$\Gamma_N(D_n)$ is disconnected and hence $\Gamma(D_n)$ is non-Hamiltonian.

If $n=pq$, then $n$ is a deficient number and so by Theorem~\ref{l7}, $\Gamma_N(D_n)$ is non-Hamiltonian.
\end{proof}

\begin{thm}\label{c1}
 If $n=2^\alpha$,  $\alpha\geq 2$, then each component of $\Gamma_N(D_n)$ contains a Hamiltonian path.
\end{thm}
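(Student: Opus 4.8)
The plan is to induct on $\alpha$, using the recursive description of $\Gamma_N(D_{2^\alpha})$ from Theorem~\ref{permutability nonabelian t36}(i). Write $G_\alpha:=\Gamma_N(D_{2^\alpha})$. The base case is $\alpha=2$: here $G_2\cong 2K_2$, so each of its two components is a single edge $K_2$, which is its own Hamiltonian path. For the inductive step let $\alpha\geq 3$. Theorem~\ref{permutability nonabelian t36}(i) gives $G_\alpha\cong 2\bigl(G_{\alpha-1}+K_2\bigr)$, so every component of $G_\alpha$ is isomorphic to $H_\alpha:=G_{\alpha-1}+K_2$. The same theorem also shows (via its displayed formula when $\alpha-1\geq 3$, via $\Gamma_N(D_{2^2})\cong 2K_2$ when $\alpha-1=2$) that $G_{\alpha-1}$ has exactly two components $J_1,J_2$, each of which carries a Hamiltonian path by the induction hypothesis. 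So it remains to build a Hamiltonian path of the join $(J_1\sqcup J_2)+K_2$.

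The key step would be an explicit concatenation. Let $P_t\colon x_t-\cdots-y_t$ be a Hamiltonian path of $J_t$ for $t=1,2$, and let $c,c'$ be the two vertices of the adjoined $K_2$; by the definition of the join, $c$ and $c'$ are each adjacent to every vertex of $G_{\alpha-1}$. Then
\[
x_1-\cdots-y_1-c-x_2-\cdots-y_2-c'
\]
is a Hamiltonian path of $H_\alpha$: the new links $y_1c$, $cx_2$ and $y_2c'$ are edges because $c,c'$ are universal to $G_{\alpha-1}$, the remaining links are edges of $P_1$ and $P_2$, and each vertex of $J_1$, of $J_2$, and each of $c,c'$ occurs exactly once. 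This establishes the inductive step and hence the theorem.

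I do not anticipate a genuine obstacle here; the only points that need care are the small-$\alpha$ bookkeeping (confirming the recursion of Theorem~\ref{permutability nonabelian t36}(i) applies for every $\alpha\geq 3$, with $\alpha=2$ handled separately) and the observation that the concatenation needs no control over the endpoints $x_t,y_t$ of the sub-paths $P_1,P_2$ --- precisely because the two new vertices coming from $K_2$ are adjacent to all of $G_{\alpha-1}$. This is what keeps the induction hypothesis in the bare form ``every component has a Hamiltonian path'' rather than forcing a stronger statement about Hamiltonian paths with prescribed ends.
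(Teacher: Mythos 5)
Your proof is correct and follows essentially the same route as the paper: induction on $\alpha$ via the decomposition $\Gamma_N(D_{2^\alpha})\cong 2(\Gamma_N(D_{2^{\alpha-1}})+K_2)$, stitching the two inductively-obtained Hamiltonian paths together through the two universal vertices of the adjoined $K_2$. The only difference is the cosmetic ordering of the concatenation (the paper uses $u-P-v-P'$ where you use $P_1-c-P_2-c'$), which is immaterial.
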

\begin{proof}
We shall prove this result by induction on $\alpha$. If $\alpha=2$, then by Theorem~\ref{permutability nonabelian t36}(i), $\Gamma_N(D_n) \cong 2K_2$.
Here each component contains a Hamiltonian path. Hence the result is true for $\alpha=2$. We assume that the result is true for any positive integer $m<\alpha$.
Now we consider $\alpha>2$. By Therorem~\ref{permutability nonabelian t36}(i), $\Gamma_N(D_n)$  has two components, say $G_1$ and $G_2$ each isomorphic to $ \Gamma_N(D_{2^{\alpha-1}})+K_2$.
Let us consider any one of these components, say $G_1$. By induction hypothesis and by Therorem~\ref{permutability nonabelian t36}(i), $\Gamma_N(D_{2^{\alpha-1}})$ has two components and each contains a Hamiltonian path, say $P$ and $P'$ respectively. Also let $u$ and $v$ be the vertices of $K_2$.
Then we have a Hamiltonian path, $u-P-v-P'$ in $G_1$. This completes the proof.
\end{proof}

\subsection{Properties of $\Gamma(D_n)$}
Now we start to investigate the structure and properties of $\Gamma(D_n)$.
\begin{thm}\label{t11}
Let $n\geq 3$ be an integer. Then
$\Gamma(D_n)\cong K_r+\Gamma_N(D_n)$,
where $r=\tau(n)+1$ if $n$ is even; $r=\tau(n)-1$ if $n$ is odd.
\end{thm}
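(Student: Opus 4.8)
The statement is essentially a specialization of Theorem~\ref{permutability nonabelian t1} to $G=D_n$, so the plan is to verify the hypotheses of that theorem and then read off $r$ from the description of the subgroup structure of $D_n$ given above. First I would note that for every $n\geq 3$ the group $D_n$ is finite and non-simple (for instance $H_0^n=\langle a\rangle$ is a proper normal subgroup), and it is not a Dedekind group since it possesses non-normal subgroups; hence both $\Gamma(D_n)$ and $\Gamma_N(D_n)$ are defined and Theorem~\ref{permutability nonabelian t1} is applicable.

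Next I would count $r$, the number of proper normal subgroups of $D_n$, directly from the list recalled before equation~\eqref{e60}. When $n$ is odd, the proper normal subgroups are exactly the cyclic subgroups $H_0^r$ with $r\mid n$ and $r\neq 1$, of which there are $\tau(n)-1$. When $n$ is even, to these $\tau(n)-1$ subgroups one adjoins the two index-$2$ dihedral subgroups $H_1^{n/2}$ and $H_2^{n/2}$, giving $r=\tau(n)-1+2=\tau(n)+1$. This is precisely the value of $r$ in the statement.

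It then remains to check that we are in the first case of Theorem~\ref{permutability nonabelian t1}, namely $r\neq |L(D_n)|-2$; equivalently, that $D_n$ has at least one proper non-normal subgroup. This is clear: for $n\geq 3$ one has $aba^{-1}=a^2b\neq b$ since $a^2\neq 1$, so $H_1^1=\langle b\rangle$ is a proper non-normal subgroup, whence $\Gamma_N(D_n)$ is non-empty and $r<|L(D_n)|-2$. (Alternatively, this also follows from~\eqref{e110}, which shows $|V(\Gamma_N(D_n))|\geq 1$.) Applying Theorem~\ref{permutability nonabelian t1} now yields $\Gamma(D_n)\cong K_r+\Gamma_N(D_n)$ with the stated $r$.

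I do not expect a genuine obstacle here; the only point requiring care is the bookkeeping of normal subgroups in the even case (remembering to include both index-$2$ dihedral subgroups but not to double-count $H_0^{n/2}$), and the routine verification of the side condition $r\neq|L(D_n)|-2$.
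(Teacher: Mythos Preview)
Your proposal is correct and follows essentially the same route as the paper: apply Theorem~\ref{permutability nonabelian t1} and count the proper normal subgroups of $D_n$ from the subgroup list to obtain $r=\tau(n)\pm 1$. The paper's proof is terser and cites Theorem~\ref{t33} in place of your direct check that $\langle b\rangle$ is non-normal, but the substance is identical.
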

\begin{proof}
Since the number of normal subgroups of $D_n$ is $\tau(n)+1$ if $n$ is even; $\tau(n)-1$ if $n$ is odd, so the proof follows by
Theorems~\ref{permutability nonabelian t1} and \ref{t33}.
\end{proof}

\begin{cor}\label{permutability nonabelian c24}
Let $n\geq 3$ be an integer.
\begin{enumerate}[{\normalfont (i)}]
\item $\deg_{\Gamma(D_n)}(H_o^r)=\tau(n)+\sigma(n)-3$, for every divisor $r$ of $n$, $r\neq 1$.
\item  $\deg_{\Gamma(D_n)}(H_i^r)=\tau(n)+x_i^r-3$, for every divisor $r$ of $n$, $r\neq n$, $ i=1,2,\ldots,\frac{n}{r}$, where $x_i^r$ is given by~\eqref{e17} if $n$ is odd;  $x_i^r$
is given by \eqref{e50} if $n=2^{\alpha}$, $\alpha\geq 2$; $x_i^r$ is given by \eqref{e13} if $n=2^\alpha n'$, with $n'$ is odd and $\alpha\geq 1$.
\end{enumerate}
\end{cor}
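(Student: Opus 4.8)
The plan is to derive Corollary~\ref{permutability nonabelian c24} directly from the join decomposition $\Gamma(D_n)\cong K_r+\Gamma_N(D_n)$ established in Theorem~\ref{t11}, together with the degree formula for $\Gamma_N(D_n)$ in Theorem~\ref{permutability nonabelian t40}. The underlying structural fact is that in a join $G_1+G_2$ every vertex of $G_1$ is adjacent to all $|V(G_2)|$ vertices of $G_2$, so $\deg_{G_1+G_2}(v)=\deg_{G_1}(v)+|V(G_2)|$ for $v\in V(G_1)$, and symmetrically. This is exactly what is recorded, for a general non-simple group, in Corollary~\ref{permutability nonabelian c1}(i), so I would simply specialise that to $G=D_n$.

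First I would treat part (i): the normal (hence join-side) subgroups of $D_n$ are precisely the $H_0^r$ with $r\mid n$, $r\neq 1$ (plus the two index-$2$ dihedral subgroups when $n$ is even). By Corollary~\ref{permutability nonabelian c1}(i), any normal subgroup $H$ has $\deg_{\Gamma(D_n)}(H)=|L(D_n)|-3$. Substituting the count $|L(D_n)|=\tau(n)+\sigma(n)$ from~\eqref{e60} gives $\deg_{\Gamma(D_n)}(H_0^r)=\tau(n)+\sigma(n)-3$ for every divisor $r\neq 1$ of $n$, which is the claimed formula. (One should note that when $n$ is even the subgroups $H_1^{n/2},H_2^{n/2}$ are also normal and satisfy the same identity, but the statement as phrased only asserts it for the $H_0^r$, so no extra bookkeeping is needed.)

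Next, for part (ii), I would apply Corollary~\ref{permutability nonabelian c1}(i) to a non-normal subgroup $H_i^r$ (here $r\neq n$, and additionally $r\neq n/2$ when $n$ is even), obtaining $\deg_{\Gamma(D_n)}(H_i^r)=r'+\deg_{\Gamma_N(D_n)}(H_i^r)$, where $r'=\tau(n)+1$ if $n$ is even and $r'=\tau(n)-1$ if $n$ is odd, by Theorem~\ref{t11}. Then I plug in the values from Theorem~\ref{permutability nonabelian t40}: $\deg_{\Gamma_N(D_n)}(H_i^r)=x_i^r-2$ when $n$ is odd, and $\deg_{\Gamma_N(D_n)}(H_i^r)=x_i^r-4$ when $n$ is even. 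In the odd case this yields $(\tau(n)-1)+(x_i^r-2)=\tau(n)+x_i^r-3$; in the even case $(\tau(n)+1)+(x_i^r-4)=\tau(n)+x_i^r-3$. Both coincide with the asserted formula, with $x_i^r$ read off from~\eqref{e17}, \eqref{e50}, or~\eqref{e13} according to whether $n$ is odd, a power of $2$, or $2^\alpha n'$ with $n'$ odd.

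There is no real obstacle here: the content is entirely bookkeeping, and the only point requiring a moment's care is checking that the two parity cases (odd $n$ versus even $n$) produce the same closed form $\tau(n)+x_i^r-3$, since the number of normal subgroups and the correction term in Theorem~\ref{permutability nonabelian t40} change in a compensating way ($-1$ vs.\ $+1$ for the normal count, $-2$ vs.\ $-4$ for the $\Gamma_N$-degree). Once that cancellation is observed the proof is a one-line invocation of Corollary~\ref{permutability nonabelian c1}, Theorem~\ref{t11}, Theorem~\ref{permutability nonabelian t40}, and equation~\eqref{e60}.
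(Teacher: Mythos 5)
Your proposal is correct and follows essentially the same route as the paper: both apply Corollary~\ref{permutability nonabelian c1}(i) together with the subgroup count $|L(D_n)|=\tau(n)+\sigma(n)$ from~\eqref{e60} for the normal subgroups, and combine it with the $\Gamma_N(D_n)$-degrees from Theorem~\ref{permutability nonabelian t40} for the non-normal ones. Your explicit check that the odd and even cases compensate to give the single closed form $\tau(n)+x_i^r-3$ is a detail the paper leaves implicit, but it is the same argument.
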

\begin{proof}
{(i)}: For each divisor $r$ of $n$, $r\neq 1$, the subgroup $H_0^r$ is normal in $D_n$ for any $n$. So by Corollary~\ref{permutability nonabelian c1}(i) and \eqref{e60}, $\deg_{\Gamma(D_n)}(H_0^r)=\tau(n)+\sigma(n)-3$.

\noindent(ii): Follows by Theorem~\ref{permutability nonabelian t40}, Corollary~\ref{permutability nonabelian c1}(i) and \eqref{e60}.
\end{proof}

\begin{cor}\label{permutability nonabelian c34}Let $n\geq 3$ be an integer and $g$ denotes the arithmetic function given by~\eqref{e18}.
\begin{enumerate}[{\normalfont (i)}]
\item If $n$ is odd, then
\begin{equation*}
|E(\Gamma(D_n))|=\displaystyle \frac{1}{2}\big \{g(n)-5\sigma(n)+\tau(n)[\tau(n)+2\sigma(n)-5]+6 \big \}.
\end{equation*}
\item If $n=2^\alpha$, $\alpha\geq 2$, then
\begin{equation*}
|E(\Gamma(D_n))|=\displaystyle \frac{1}{2}\big \{2^{\alpha+1}(6\alpha-7)+\alpha^2-5\alpha+14\big \}.
\end{equation*}
\item  If $n=2^\alpha n'$, with $n'$ is odd, $\alpha \geq 1$, then
\begin{equation*}
\begin{split}
|E(\Gamma(D_n))|=\displaystyle \frac{1}{2}\big \{[(\alpha-1)2^{\alpha+3}+9] g(n')-5\sigma(n)+\tau(n)+6
+\tau(n)[\tau(n)+2\sigma(n)-6]\big \}.
\end{split}
\end{equation*}
\end{enumerate}
\end{cor}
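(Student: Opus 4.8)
The proof is a direct application of Corollary~\ref{permutability nonabelian c1}(ii), which expresses $|E(\Gamma(G))|$ in terms of $|E(\Gamma_N(G))|$, the order $|L(G)|$ of the subgroup lattice, and the number $r$ of proper normal subgroups. For $G=D_n$ all three ingredients are already available: $|L(D_n)|=\tau(n)+\sigma(n)$ by~\eqref{e60}; the number of proper normal subgroups is $r=\tau(n)-1$ when $n$ is odd and $r=\tau(n)+1$ when $n$ is even, as noted in the proof of Theorem~\ref{t11}; and $|E(\Gamma_N(D_n))|$ is given in each of the three cases by Corollary~\ref{permutability nonabelian c33}(i)--(iii). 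So the plan is simply to substitute these quantities into
\[
|E(\Gamma(D_n))|=|E(\Gamma_N(D_n))|+\frac{r}{2}\bigl(2|L(D_n)|-r-5\bigr)
\]
and simplify in each case.

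For part (i), with $n$ odd, I take $r=\tau(n)-1$, $|L(D_n)|=\tau(n)+\sigma(n)$ and $|E(\Gamma_N(D_n))|=\frac12\{g(n)-3\sigma(n)+2\}$; expanding $\frac{\tau(n)-1}{2}\bigl(2\tau(n)+2\sigma(n)-\tau(n)-4\bigr)$ and collecting terms in $\tau(n)$ and $\sigma(n)$ yields the stated quadratic expression. For part (ii), with $n=2^{\alpha}$ and $\alpha\geq2$, one has $\tau(n)=\alpha+1$ and $\sigma(n)=2^{\alpha+1}-1$, so $|L(D_n)|=\alpha+2^{\alpha+1}$ and $r=\alpha+2$; combining these with $|E(\Gamma_N(D_n))|=2^{\alpha}(4\alpha-11)+14$ from Corollary~\ref{permutability nonabelian c33}(ii) and simplifying gives the claimed value. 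For part (iii), with $n=2^{\alpha}n'$, $n'$ odd and $\alpha\geq1$, I use $r=\tau(n)+1$, $|L(D_n)|=\tau(n)+\sigma(n)$ and $|E(\Gamma_N(D_n))|=\frac12\{[(\alpha-1)2^{\alpha+3}+9]g(n')-7\sigma(n)+12\}$, carrying the term $[(\alpha-1)2^{\alpha+3}+9]g(n')$ along untouched and only rearranging the $\sigma(n)$- and $\tau(n)$-contributions; the relation $\tau(n)=(\alpha+1)\tau(n')$ can be invoked at the end if one wishes to match the displayed form exactly.

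The only genuine work — and hence the main obstacle — is the algebraic bookkeeping in these simplifications; there is no conceptual difficulty, since every quantity that enters has already been computed in the preceding results. Care is needed in case (ii) to keep the terms that are linear in $\alpha$ separate from those that are exponential in $\alpha$, so that the final form $\frac12\{2^{\alpha+1}(6\alpha-7)+\alpha^2-5\alpha+14\}$ emerges cleanly, and in case (iii) to check that the coefficients of $\sigma(n)$ and $\tau(n)$ combine to precisely the stated expression rather than an equivalent but differently-grouped one.
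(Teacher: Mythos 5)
Your proposal is correct and follows exactly the paper's own route: substitute $|L(D_n)|=\tau(n)+\sigma(n)$, the count $r=\tau(n)-1$ (odd $n$) or $\tau(n)+1$ (even $n$) of proper normal subgroups, and the edge counts of $\Gamma_N(D_n)$ from Corollary~\ref{permutability nonabelian c33} into the formula of Corollary~\ref{permutability nonabelian c1}(ii), then simplify. The algebra in all three cases checks out, so there is nothing to add.
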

\begin{proof}
Follows by Corollaries~\ref{permutability nonabelian c33}, \ref{permutability nonabelian c1}(ii), \eqref{e60} and from the fact that the number of normal subgroups of $D_n$ is $\tau(n)+1$ if $n$ is even; $\tau(n)-1$ if $n$ is odd.
\end{proof}

\begin{cor}\label{permutability nonabelian c22}
Let $n\geq 3$ be an integer.
\begin{enumerate}[{\normalfont (i)}]
 \item If $n$ is odd, then  $\Gamma(D_n)$ is non-Eulerian.

\item  If $n=2^\alpha$, $\alpha\geq 2$, then $\Gamma(D_n)$ is Eulerian if and only if $\alpha$ is odd.

\item  If $n=2^\alpha n'$, with $n'=p_1^{\alpha_1}p_2^{\alpha_2}\ldots p_k^{\alpha_k}$  is odd, where $p_i$'s are distinct primes and $\alpha, \alpha_i\geq 1$,
then $\Gamma(D_n)$ is Eulerian if and only if $\alpha$ is odd and $\alpha_i$ is even for every $i \in \{ 1,2, \ldots , k\}$.
\end{enumerate}
\end{cor}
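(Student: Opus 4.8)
The plan is to reduce everything to an analysis of vertex parities via the degree formulas already established. By Theorem~\ref{t11}, $\Gamma(D_n)\cong K_r+\Gamma_N(D_n)$ with $r=\tau(n)-1$ for $n$ odd and $r=\tau(n)+1$ for $n$ even, and $\Gamma(D_n)$ is Eulerian if and only if every vertex has even degree. So I would split the vertices into the $r$ normal subgroups and the non-normal subgroups, and in each class use Corollary~\ref{permutability nonabelian c24} to read off the degree and decide its parity.

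For part (i), $n$ odd: here $r=\tau(n)-1$. Since $n$ is odd, $\tau(n)$ and $\sigma(n)$ have the same parity (Note~\ref{n1}(iii)). By Corollary~\ref{permutability nonabelian c24}(i), a normal subgroup $H_0^r$ has degree $\tau(n)+\sigma(n)-3$, which is odd when $\tau(n)$ (hence $\sigma(n)$) is even, and even when $\tau(n)$ is odd; meanwhile a non-normal vertex $H_i^r$ has degree $\tau(n)+x_i^r-3$, where $x_i^r = \sum_{s\mid n}[r,s]/s$ is a sum of $\tau(n)$ odd terms, hence $x_i^r\equiv\tau(n)\pmod 2$, so this degree is $\equiv 2\tau(n)-3\equiv 1\pmod 2$, i.e.\ always odd. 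Since there is always at least one non-normal subgroup (e.g.\ $H_1^1$), $\Gamma(D_n)$ always has an odd-degree vertex, so it is non-Eulerian. (One should note $n=3$ forces $\Gamma_N(D_3)\cong\overline K_3$, but the $K_1^1$ vertices still have odd degree in $\Gamma(D_3)$, so the conclusion persists.)

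For part (ii), $n=2^\alpha$, $\alpha\geq 2$: here $r=\tau(n)+1=\alpha+2$. A normal subgroup has degree $\tau(n)+\sigma(n)-3=(\alpha+1)+(2^{\alpha+1}-1)-3=\alpha+2^{\alpha+1}-3$, which is even iff $\alpha$ is odd. A non-normal subgroup $H_i^r$ with $r=2^u$ has, by Corollary~\ref{permutability nonabelian c24}(ii) and \eqref{e50}, degree $\tau(n)+x_i^r-3=(\alpha+1)+(2^{u+2}-2u+2\alpha-3)-3=3\alpha+2^{u+2}-2u-5$, which has the same parity as $3\alpha-5\equiv\alpha+1\pmod 2$, hence even iff $\alpha$ is odd. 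Thus every vertex has even degree exactly when $\alpha$ is odd, giving the equivalence.

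For part (iii), $n=2^\alpha n'$ with $n'=p_1^{\alpha_1}\cdots p_k^{\alpha_k}$ odd: here $r=\tau(n)+1$ with $\tau(n)=(\alpha+1)\tau(n')$. A normal subgroup has degree $\tau(n)+\sigma(n)-3$; since $n$ is even with $n=2^\alpha n'$, $\sigma(n)$ is odd iff $\tau(n')$ is odd (Note~\ref{n1}(v)), i.e.\ iff every $\alpha_i$ is even, and $\tau(n)=(\alpha+1)\tau(n')$ is even iff $\alpha$ is odd or some $\alpha_i$ is odd, so I would just enumerate parities of the pair $(\tau(n),\sigma(n))$. A non-normal subgroup $H_i^r$ with $r=2^\beta r'$ has degree $\tau(n)+x_i^r-3$ where $x_i^r$ is given by \eqref{e13} as an odd multiple ($2^{\alpha+1}-1$ or $2^{\beta+2}-2\beta+2\alpha-3$, both odd) of $x_i^{r'}=r'\sum_{s'\mid n'}1/(r',s')$, and $x_i^{r'}$ is a sum of $\tau(n')$ odd terms so $x_i^{r'}\equiv\tau(n')\pmod 2$; hence $x_i^r\equiv\tau(n')\pmod2$ and the degree is $\equiv\tau(n)+\tau(n')-3\pmod 2$. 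The main obstacle — really the only delicate point — is bookkeeping these parities consistently: I expect to find that requiring all normal-vertex and all non-normal-vertex degrees even forces simultaneously $\tau(n)$ even in a way that pins down $\alpha$ odd, together with $\sigma(n)$ and $\tau(n')$ odd, i.e.\ every $\alpha_i$ even. Writing $\tau(n)+\tau(n')-3\equiv 0$ with $\tau(n)=(\alpha+1)\tau(n')$ gives $((\alpha+1)+1)\tau(n')\equiv 1$, forcing $\tau(n')$ odd (all $\alpha_i$ even) and $\alpha$ odd; checking these also make the normal-vertex degrees even then closes the argument.
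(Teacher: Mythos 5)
Your proposal is correct and follows essentially the same route as the paper: apply the degree formulas of Corollary~\ref{permutability nonabelian c24} to the normal and non-normal vertices separately and track parities via Note~\ref{n1}, the only cosmetic difference being that in part (i) the paper exhibits the specific odd-degree vertices $H_i^1$ (using $x_i^1=\tau(n)$) while you show all non-normal vertices have odd degree. The parity bookkeeping in part (iii), including the identity $\tau(n)=(\alpha+1)\tau(n')$ and $\sigma(n)\equiv\tau(n')\pmod 2$, matches the paper's argument and correctly yields both directions of the equivalence.
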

\begin{proof}
(i): By Corollary~\ref{permutability nonabelian c24}(ii) and \eqref{e121}, $\deg_{\Gamma(D_n)}(H_i^1)$ is odd, for each $i=1,2,\ldots,n$. So
$\Gamma(D_n)$ is non-Eulerian.

\noindent{(ii)}:  By Corollary~\ref{permutability nonabelian c24}(i), $\deg_{\Gamma(D_n)}{(H_0^r)}$ is even if and only if either $\tau(n)$ is odd and $\sigma(n)$ is even or
$\tau(n)$ is even and $\sigma(n)$ is odd. By Note~\ref{n1}, it follows that $\deg_{\Gamma(D_n)}(H_0^r)$ is even if and only if $\alpha$ is odd.
 Also by Corollary~\ref{permutability nonabelian c24}(ii),
$\deg(H_i^r)$ is even if and only if either $\tau(n)$ is odd and $x_i^r$ is even or $\tau(n)$ is even and $x_i^r$ is odd. By \eqref{e50}
$x_i^r$ is odd. Further $\tau(n)$ is even only when $\alpha$ is odd. Thus $\deg(H_i^r)$ is even only when $\alpha$ is odd.
Combining all these, we get the result.

\noindent{(iii)}: By Corollary~\ref{permutability nonabelian c24}(i), $\deg_{\Gamma(D_N)}{(H_0^r)}$ is even if and only if either $\tau(n)$ is odd and $\sigma(n)$ is even or
$\tau(n)$ is even and $\sigma(n)$ is odd. By Note~\ref{n1}, we have $\tau(n)$ is even and $\sigma(n)$ is odd. It follows that $\deg_{\Gamma(D_n)}(H_0^r)$
is even if and only if $\alpha$ is odd and $\alpha_i$ is even for every $i\in\{1,2,\ldots,k\}$. Also by
Corollary~\ref{permutability nonabelian c24}(ii), $\deg_{\Gamma(D_n)}{(H_i^r)}$ is even if and only if either $\tau(n)$ is odd and $x_i^r$ is even or
$\tau(n)$ is even and $x_i^r$ is odd. By the above argument, we have $\tau(n)$ is even and so we must have $x_i^r$ is odd. By \eqref{e13}, $x_i^r$ is odd if and only if $\sigma(n')$ is odd. Now, by Note~\ref{n1}, it follows that $\deg_{\Gamma(D_n)}(H_i^r)$ is even if and only if $\alpha_i$ is even for every $i\in\{1,2,\ldots,k\}.$
Proof follows by combining the above facts.
\end{proof}

\begin{cor}\label{permutability nonabelian c5}
Let $n\geq 3$ be an integer.
\begin{enumerate}[{\normalfont (i)}]
\item $\alpha(\Gamma(D_n))=
 	\begin{cases}
 		n, & \mbox{if~ } n \mbox{~is odd};  \\
 		\frac{n}{2}, & \mbox{if~ } n \mbox{~is even}
 	\end{cases}$
\item $\omega(\Gamma(D_n))=\chi(\Gamma(D_n))=
 	\begin{cases}
 		2(\tau(n)-1), &\mbox{if~} n \mbox{~is odd};  \\
 		\tau(n)+(2\alpha+1)\tau(n')-2, & \mbox{if~} n=2^\alpha n', \mbox{with~} n' \mbox{~is odd.}
 	\end{cases}$

In particular, $\Gamma(D_n)$ is weakly perfect.
\end{enumerate}
 \end{cor}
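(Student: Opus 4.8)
The plan is to read both statements straight off the join decomposition $\Gamma(D_n)\cong K_r+\Gamma_N(D_n)$ of Theorem~\ref{t11}, together with the invariants of $\Gamma_N(D_n)$ already computed in Theorem~\ref{t37}. The mechanism connecting the two is Corollary~\ref{permutability nonabelian c1}, so the first task is to confirm its hypotheses for $G=D_n$ with $n\geq 3$: the group is non-simple (it has the normal subgroup $\langle a\rangle$ of index $2$), it is not a Dedekind group so that $\Gamma_N(D_n)$ is defined (the reflection subgroup $H_1^1=\langle b\rangle$ of order $2$ is not normal, since $aba^{-1}=ba^{-2}\notin\langle b\rangle$ for $n\geq 3$), and the same observation forces the number $r$ of proper normal subgroups to satisfy $r\neq|L(D_n)|-2$. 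Hence Corollary~\ref{permutability nonabelian c1} applies, with $r=\tau(n)-1$ when $n$ is odd and $r=\tau(n)+1$ when $n$ is even, exactly as recorded in Theorem~\ref{t11}.

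For part~(i), Corollary~\ref{permutability nonabelian c1}(iii) gives $\alpha(\Gamma(D_n))=\alpha(\Gamma_N(D_n))$; intuitively, adjoining a clique that is completely joined to $\Gamma_N(D_n)$ cannot enlarge an independent set, because such a set meets $K_r$ in at most one vertex and that vertex is adjacent to everything else. Substituting $\alpha(\Gamma_N(D_n))=n$ for $n$ odd and $\alpha(\Gamma_N(D_n))=\tfrac{n}{2}$ for $n$ even from Theorem~\ref{t37}(i) yields the claimed values.

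For part~(ii), Corollary~\ref{permutability nonabelian c1}(iv)--(v) gives $\omega(\Gamma(D_n))=r+\omega(\Gamma_N(D_n))$ and $\chi(\Gamma(D_n))=r+\chi(\Gamma_N(D_n))$, since taking the join with $K_r$ raises both parameters by exactly $r$. Plugging in $r$ from Theorem~\ref{t11} and the equalities $\omega(\Gamma_N(D_n))=\chi(\Gamma_N(D_n))$ equal to $\tau(n)-1$ (for $n$ odd) resp.\ $(2\alpha+1)\tau(n')-3$ (for $n=2^\alpha n'$ with $n'$ odd) from Theorem~\ref{t37}(iii), a one-line simplification produces $2(\tau(n)-1)$ in the odd case and $\tau(n)+(2\alpha+1)\tau(n')-2$ in the even case. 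Since $\omega(\Gamma(D_n))=\chi(\Gamma(D_n))$ in both cases, $\Gamma(D_n)$ is weakly perfect. There is essentially no obstacle here: the only things to watch are checking the applicability of Corollary~\ref{permutability nonabelian c1} and reading off the correct value of $r$ for each parity, both of which are already handled by Theorem~\ref{t11}.
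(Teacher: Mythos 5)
Your proposal is correct and follows exactly the paper's own route: apply Corollary~\ref{permutability nonabelian c1} (parts (iii)--(v)) to the join decomposition of Theorem~\ref{t11}, substitute the invariants of $\Gamma_N(D_n)$ from Theorem~\ref{t37}, and use $r=\tau(n)-1$ or $\tau(n)+1$ according to parity. The extra verification that $D_n$ is non-Dedekind and that $r\neq|L(D_n)|-2$ is a welcome detail the paper leaves implicit.
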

\begin{proof}
\noindent{(i):} Proof follows by Corollary~\ref{permutability nonabelian c1} and Theorem~\ref{t37}.

 \noindent{(ii):} Since the number of proper normal subgroups of $D_n$ is $\tau(n)-1$ if $n$ is odd; $\tau(n)+1$ if $n$ is even, so the proof follows by
Theorem~\ref{t37} and Corollary~\ref{permutability nonabelian c1}.
\end{proof}
%
%
\begin{thm}\label{t66}Let $n\geq 3$ be an integer. Then
 $\Gamma(D_n)$ is a split graph if and only if $n$ is an odd prime.
\end{thm}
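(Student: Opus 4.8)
The plan is to use the standard obstruction for split graphs: a graph that contains an induced $2K_2$ (two disjoint edges with no edge between them) is never a split graph. Indeed, if the vertex set decomposed as a clique $C$ together with an independent set $I$, then each of the two edges of the $2K_2$ would have at least one endpoint in $C$, and those two endpoints, belonging to different edges, would be non-adjacent, contradicting that $C$ is a clique. Since moreover split graphs are closed under taking induced subgraphs, it will suffice to exhibit an induced $2K_2$ among the proper non-normal subgroups of $D_n$ whenever $n$ is not an odd prime, and to settle the remaining case directly.

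For the ``if'' direction I would argue directly. If $n=p$ is an odd prime, the only proper subgroups of $D_p$ are the normal subgroup $H_0^p=\langle a\rangle$ and the $p$ subgroups $H_1^1,\dots,H_p^1$ of order $2$; no two of the latter permute (their product set would have order $4\nmid 2p$), while $H_0^p$ is adjacent to all of them, so $\Gamma(D_p)\cong K_{1,p}$, which is patently a split graph.

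For the converse I would prove the contrapositive, exhibiting for each $n\ge 3$ that is not an odd prime four pairwise-distinct proper non-normal subgroups inducing a $2K_2$ in $\Gamma(D_n)$, all adjacencies being read off from the permutability criterion~\eqref{e11} ($H_i^r$ and $H_j^s$ permute iff $\frac{n}{[r,s]}\mid 2(i-j)$). If $n$ is even (so $n\ge 4$), I would take the four order-$2$ subgroups $H_1^1,\ H_{1+\frac n2}^1,\ H_2^1,\ H_{2+\frac n2}^1$: by~\eqref{e11} two such subgroups are adjacent exactly when their indices are congruent modulo $\frac n2$, so among these four the only edges are $\{H_1^1,H_{1+n/2}^1\}$ and $\{H_2^1,H_{2+n/2}^1\}$, an induced $2K_2$. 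If $n$ is odd and composite, letting $p$ be its smallest prime factor so that $m:=n/p\ge 3$ is odd, I would take $H_1^1,\ H_1^p,\ H_2^1,\ H_2^p$ (non-normal since $n$ is odd with $p\ne 1$ and $p\ne n$, and distinct since $n/p\ge 2$): the containments $H_1^1<H_1^p$ and $H_2^1<H_2^p$ force those two pairs to be adjacent, while for each of the other four pairs $[r,s]\in\{1,p\}$ with index difference $1$, so $\frac{n}{[r,s]}$ equals $m$ or $n$, neither of which divides $2$; hence those pairs are non-adjacent and we again obtain an induced $2K_2$.

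The only genuinely delicate point — and the reason the decomposition results of the previous subsection do not by themselves close the argument — is that the converse must cover \emph{every} composite $n$, including those with several distinct prime factors, for which no explicit description of $\Gamma_N(D_n)$ was established; the crux is therefore the uniform choice of the two witnessing quadruples above, after which each verification is an immediate application of~\eqref{e11}.
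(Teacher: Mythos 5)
Your proof is correct, but it takes a genuinely different route from the paper. The paper disposes of the theorem in one line by combining three of its structural results: $\Gamma(D_n)\cong K_r+\Gamma_N(D_n)$ (so $\Gamma(D_n)$ is split iff $\Gamma_N(D_n)$ is), the multipartite decomposition of $\Gamma_N(D_n)$ into maximal independent sets (Theorem~\ref{t33}), and the fact that $\Gamma_N(D_n)$ is totally disconnected iff $n$ is an odd prime (Theorem~\ref{t35}). You instead invoke the forbidden-subgraph side of the Foldes--Hammer characterization (no induced $2K_2$) and exhibit explicit witnessing quadruples: the four reflections $H_1^1,H_{1+n/2}^1,H_2^1,H_{2+n/2}^1$ for even $n$, and $H_1^1,H_1^p,H_2^1,H_2^p$ for odd composite $n$, with every adjacency read off from the criterion~\eqref{e11}; I checked all six pairs in each case and the computations are right, as is the direct verification that $\Gamma(D_p)\cong K_{1,p}$ for $p$ an odd prime. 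What your approach buys is completeness and self-containedness: the paper's phrase ``is a partite graph, so by definition of split graph'' leaves the reader to supply the actual obstruction (being multipartite does not by itself preclude being split --- $K_{1,p}$ is bipartite and split), and your induced $2K_2$ is exactly the missing concrete obstruction, produced uniformly for every composite $n$ without appeal to the decomposition theorems. What the paper's route buys is brevity, since Theorems~\ref{t33} and~\ref{t35} were already in hand. One small remark: your $2K_2$ argument, applied inside $\Gamma_N(D_n)$, would equally well repair the terse step in the paper's own proof.
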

\begin{proof}
 By Theorem~\ref{t33}, $\Gamma_N(D_n)$ is a partite graph, so by definition of split graph and by Theorem~\ref{t35}, $\Gamma(D_n)$ is a split graph
if and only if $n$ is an odd prime.
\end{proof}

In the following result, we describe the structure of $\Gamma(D_n)$ for some values of $n$.
\begin{cor}\label{permutability nonabelian c8}
Let $n\geq 3$ be an integer.
\begin{enumerate}[{\normalfont (i)}]
\item If $n=2^\alpha$ and $\alpha\geq2$, then
$\Gamma(D_n)\cong K_{\alpha+2}+\Gamma_N(D_n)$,
where $\Gamma_N(D_n)$ is given by
\eqref{e71}.
\item If $n=p^\alpha$ where $p$ is  a prime, $p>2$, $\alpha\geq1$, then
$\Gamma(D_n)\cong K_\alpha+\Gamma_N(D_n)$,
where $\Gamma_N(D_n)$ is given by
\eqref{e72}.
\item If $n=2p$, where $p$ is  a prime, $p>2$, then
$\Gamma(D_n)\cong K_5+\Gamma_N(D_n)$,
where $\Gamma_N(D_n)$ is given by
\eqref{e73}.
\item If $n=pq$, where $p$, $q$ are distinct   primes, $2<p < q$, then
$\Gamma(D_n)\cong K_3+\Gamma_N(D_n)$,
where $\Gamma_N(D_n)$ is given by
\eqref{e74}.
\end{enumerate}
\end{cor}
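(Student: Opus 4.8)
The plan is to read off each of the four isomorphisms directly from Theorem~\ref{t11}, which already establishes $\Gamma(D_n)\cong K_r+\Gamma_N(D_n)$ with $r=\tau(n)+1$ when $n$ is even and $r=\tau(n)-1$ when $n$ is odd. So the only work left is to evaluate $\tau(n)$ in each case and to substitute the explicit description of $\Gamma_N(D_n)$ coming from Theorem~\ref{permutability nonabelian t36}.

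First I would handle the prime-power cases. If $n=2^\alpha$ with $\alpha\ge 2$, then $n$ is even and $\tau(n)=\alpha+1$, so $r=\tau(n)+1=\alpha+2$, and Theorem~\ref{t11} gives $\Gamma(D_n)\cong K_{\alpha+2}+\Gamma_N(D_n)$, with $\Gamma_N(D_n)$ as described in~\eqref{e71}. If $n=p^\alpha$ with $p$ an odd prime and $\alpha\ge 1$, then $n$ is odd and $\tau(n)=\alpha+1$, so $r=\tau(n)-1=\alpha$, giving $\Gamma(D_n)\cong K_\alpha+\Gamma_N(D_n)$ with $\Gamma_N(D_n)$ as in~\eqref{e72}.

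For the two biprime cases $\tau(n)=4$. When $n=2p$ with $p>2$, $n$ is even, so $r=\tau(n)+1=5$ and $\Gamma(D_n)\cong K_5+\Gamma_N(D_n)$ with $\Gamma_N(D_n)$ as in~\eqref{e73}; when $n=pq$ with $2<p<q$, $n$ is odd, so $r=\tau(n)-1=3$ and $\Gamma(D_n)\cong K_3+\Gamma_N(D_n)$ with $\Gamma_N(D_n)$ as in~\eqref{e74}. In each case the claimed statement follows immediately.

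There is no real obstacle here beyond this bookkeeping: the substantive content has already been absorbed into Theorem~\ref{permutability nonabelian t1} (in the form of Theorem~\ref{t11}), which encodes the fact that the proper normal subgroups form a clique joined to every other vertex, and into Theorem~\ref{permutability nonabelian t36}, which supplies the internal structure of $\Gamma_N(D_n)$. The only point I would be careful to note is that in each of the four cases $\Gamma_N(D_n)$ is genuinely defined and nonempty, so that the ``$K_r+\Gamma_N(D_n)$'' branch of Theorem~\ref{permutability nonabelian t1} applies rather than the degenerate ``$K_r$'' branch; this is clear since $D_n$ always possesses non-normal order-$2$ subgroups of the form $H_i^1$, whence $|L(D_n)|-2>r$.
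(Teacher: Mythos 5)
Your proposal is correct and follows exactly the paper's own route: the paper proves this corollary by citing Theorem~\ref{t11} for the decomposition $K_r+\Gamma_N(D_n)$ and Theorem~\ref{permutability nonabelian t36} for the structure of $\Gamma_N(D_n)$, with the values of $r$ obtained from $\tau(n)$ exactly as you compute them. Your extra remark verifying that the non-degenerate branch of Theorem~\ref{permutability nonabelian t1} applies is a sensible (if unstated in the paper) check.
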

\begin{proof}
Follows from Theorems~\ref{t11} and~\ref{permutability nonabelian t36}.
\end{proof}


%
 \begin{thm}\label{l1}
  Let $n\geq 3$ be an integer. If $\tau(n)+\sigma(n)<2(n+1)$, then $\Gamma(D_n)$ is non-Hamiltonian.
 \end{thm}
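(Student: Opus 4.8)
The plan is to mimic exactly the argument of Theorem~\ref{l7}, but now inside the larger graph $\Gamma(D_n)$, using Theorem~\ref{l3} with a carefully chosen cut set. By Theorem~\ref{t11} we know $\Gamma(D_n)\cong K_r+\Gamma_N(D_n)$, where $r$ is the number of proper normal subgroups of $D_n$ (equal to $\tau(n)-1$ if $n$ is odd and $\tau(n)+1$ if $n$ is even). The idea is that the only vertices that can ``glue together'' the many independent-set pieces of $\Gamma_N(D_n)$ are the $r$ normal-subgroup vertices of the $K_r$ part; if we delete all of them, the graph falls apart into at least as many components as there are vertices in the largest maximal independent set $\mathcal A_1$ of $\Gamma_N(D_n)$ (the reflections $H_i^1$), and these are pairwise non-adjacent in $\Gamma_N(D_n)$.

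First I would set $S$ to be the set of all $r$ proper normal subgroups of $D_n$, so $|S| = r$. Next I would argue that $\Gamma(D_n)-S$ is exactly $\Gamma_N(D_n)$ together possibly with isolated vertices, and in particular that $c(\Gamma(D_n)-S) \geq |\mathcal A_1|$: indeed the vertices in $\mathcal A_1$ (the order-$2$ subgroups $\langle ba^{i-1}\rangle$, of which there are $n$) are mutually non-permuting, hence pairwise non-adjacent in $\Gamma_N(D_n)$, and after removing $S$ they retain no common neighbour forcing them into one component, so they lie in $|\mathcal A_1|$ distinct components. Actually more carefully: every non-normal subgroup other than a reflection sits in some component, and $\Gamma_N(D_n)$ restricted to $\mathcal A_1$ is totally disconnected, so $c(\Gamma(D_n)-S)\ge n$ when $n$ is odd and $\ge n/2$ when $n$ is even. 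Then I would invoke the hypothesis $\tau(n)+\sigma(n)<2(n+1)$. When $n$ is odd, $r=\tau(n)-1$ and $|L(D_n)|=\tau(n)+\sigma(n)$; the hypothesis gives $\tau(n)-1 < 2n+2-\sigma(n)-1 \le n$ since $\sigma(n)\ge n+1$, so $|S|=r<n\le c(\Gamma(D_n)-S)$, and Theorem~\ref{l3} yields non-Hamiltonicity. When $n$ is even one checks analogously that $r=\tau(n)+1<n/2$ follows from $\tau(n)+\sigma(n)<2(n+1)$ together with $\sigma(n)\ge 1+2+n/2+n$ (as $1,2,n/2,n$ are distinct divisors of even $n\ge 4$), so again $|S|<c(\Gamma(D_n)-S)$.

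The main obstacle I anticipate is the even case bookkeeping: there one must be sure that the normal subgroups really are the only vertices joining the $\mathcal A_1^e$ and $\mathcal B_1^e$ pieces and the various $\mathcal C_r^e$, $\mathcal A_r^e$, $\mathcal B_r^e$ pieces, and that deleting them leaves at least $n/2$ components; this needs the structural description of $\Gamma_N(D_n)$ recorded before Theorem~\ref{t33}, namely that $V(\Gamma_N(D_n))$ is a disjoint union of maximal independent sets among which $\mathcal A_1^e$ has size $n/2$, together with the observation that no reflection permutes with another reflection in $D_n$ (which follows from \eqref{e11} with $r=s=1$: one would need $n\mid 2(i-j)$, impossible for distinct $i,j\in\{1,\dots,n\}$ when... actually when $n$ is even and $i-j=n/2$ it IS possible, so the reflections are not all mutually non-adjacent — hence one must instead take the independent set $\mathcal A_1^e$ of size $n/2$, not all $n$ reflections). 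Getting this size right — $n$ versus $n/2$ — and matching it against the correct value of $r$ is the delicate point; everything else is the routine divisor inequality $\sigma(n)\ge n+1$ (odd case) or $\sigma(n)\ge 2n+3+n/2-\dots$ refined as needed (even case), and a direct application of Theorem~\ref{l3}.
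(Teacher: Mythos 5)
Your choice of cut set is the wrong one, and the argument collapses at its central claim. You delete the $r$ normal-subgroup vertices and assert that what remains, namely $\Gamma_N(D_n)$, has at least $|\mathcal A_1|=n$ components because the reflections are pairwise non-adjacent and ``retain no common neighbour forcing them into one component.'' That inference is invalid: an independent set of size $n$ inside a graph does not yield $n$ components, and in $\Gamma_N(D_n)$ the reflections are typically joined to one another through the other non-normal subgroups. Take $n=15$, where the hypothesis holds ($\tau(15)+\sigma(15)=4+24=28<32$): every $H_i^5$ permutes with every $H_j^3$ (since $n/[5,3]=1$), and each reflection $H_k^1$ is contained in, hence permutes with, some $H_i^5$ and some $H_j^3$, so $\Gamma_N(D_{15})$ is connected (this is exactly Figure~1). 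With your $S$ one gets $c(\Gamma(D_{15})-S)=1\le 3=|S|$, and Theorem~\ref{l3} gives no information. The same failure occurs for every $n=pq$ with $2<p<q$, so your proof does not establish the theorem even in the odd case.

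The correct move is to invert the choice: as in Theorem~\ref{l7}, take $S=V(\Gamma(D_n))-\mathcal A_1$, i.e.\ delete everything \emph{except} the reflections. Then $\Gamma(D_n)-S$ is the induced subgraph on $\mathcal A_1$, which for odd $n$ is $\overline{K}_n$, so $c(\Gamma(D_n)-S)=n$, while $|S|=\tau(n)+\sigma(n)-2-n<n$ is precisely the hypothesis $\tau(n)+\sigma(n)<2(n+1)$; Theorem~\ref{l3} then applies. Your parenthetical observation about even $n$ is a fair one: there $H_i^1$ and $H_{i+n/2}^1$ permute, so the induced subgraph on all $n$ reflections is a perfect matching with only $n/2$ components, and the inequality must be re-examined (this affects the paper's one-line proof as well; for the even $n$ that satisfy the hypothesis, such as $n=2q$ with $q>5$, non-Hamiltonicity is instead recovered from $\Gamma(D_{2q})\cong K_5+qK_3$ and Lemma~\ref{l2}). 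But noticing that subtlety does not repair the main defect, which is that deleting only the normal subgroups cannot disconnect $\Gamma(D_n)$ into enough pieces.
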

 \begin{proof}
Let $S=V(\Gamma(D_n))-\mathcal A_1$. Since $\tau(n)+\sigma(n)<2(n+1)$, so $|S|=\tau(n)+\sigma(n)-(n+2)< n=|\mathcal A_1|=c(\Gamma(D_n)-S)$. Thus by Theorem~\ref{l3}, $\Gamma(D_n)$ is non-Hamiltonian.
 \end{proof}

\begin{lemma}\label{l2}
Let $G_r$, $r=1,2,\ldots,m$ be vertex disjoint graphs  and $H$ be a graph with $n$ vertices.
Suppose that $G_r$, $r=1,2,\ldots,m$ and $H$ have  Hamiltonian paths. Let $G \cong \displaystyle H+\bigcup_{r=1}^m G_r$.
Then $G$ is Hamiltonian if and only if $m\leq n$.
\end{lemma}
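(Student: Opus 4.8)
The plan is to prove the two implications separately, using Theorem~\ref{l3} for necessity and an explicit cycle construction for sufficiency. Throughout, the key structural fact is that in $G\cong H+\bigcup_{r=1}^m G_r$ every vertex of $H$ is adjacent to every vertex of every $G_r$, while distinct $G_r$'s are mutually non-adjacent; also $m\ge 1$ is implicit in indexing $r=1,\ldots,m$.

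\emph{Necessity.} Suppose $G$ is Hamiltonian. I would apply Theorem~\ref{l3} with $S=V(H)$, which is non-empty since $H$ has a Hamiltonian path. Deleting $V(H)$ from $G$ removes every edge of the join, leaving exactly the vertex-disjoint graphs $G_1,\ldots,G_m$; each $G_r$ is connected because it has a Hamiltonian path, so $c(G-S)=m$. Theorem~\ref{l3} then yields $m=c(G-S)\le|S|=n$.

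\emph{Sufficiency.} Suppose $m\le n$. Write the Hamiltonian path of $H$ as $h_1-h_2-\cdots-h_n$ and, for each $r$, the Hamiltonian path of $G_r$ as $P_r=a_r-\cdots-b_r$ (with $a_r=b_r$ allowed when $|V(G_r)|=1$). I would insert the block $P_r$ into the $r$-th edge of the $H$-path for $r=1,\ldots,m-1$, keep traversing $H$ along $h_m-h_{m+1}-\cdots-h_n$, and then use the last block $P_m$ to return from $h_n$ to $h_1$, obtaining
\[
h_1-a_1-\cdots-b_1-h_2-a_2-\cdots-b_2-\cdots-b_{m-1}-h_m-h_{m+1}-\cdots-h_n-a_m-\cdots-b_m-h_1 .
\]
Every consecutive pair here is an edge of $G$: the steps $h_i-a_i$, $b_i-h_{i+1}$, $h_n-a_m$, $b_m-h_1$ are join edges; the steps inside each $P_r$ come from the Hamiltonian path of $G_r$; and the steps $h_j-h_{j+1}$ come from the Hamiltonian path of $H$. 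Each vertex of $G$ occurs exactly once, so this is a Hamiltonian cycle and $G$ is Hamiltonian. (For $m=1$ the displayed walk collapses to $h_1-h_2-\cdots-h_n-a_1-\cdots-b_1-h_1$; for $m=n$ the segment $h_m-\cdots-h_n$ is a single vertex.)

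The one genuinely delicate point — the step I expect to require the most care in writing up — is the closure of the cycle: the endpoints $h_1,h_n$ of the Hamiltonian path of $H$ need not be adjacent in $H$, so one cannot simply close the walk inside $H$. This is exactly why at least one block $P_r$ must be reserved to bridge $h_n$ back to $h_1$ (using $m\ge1$), and the bound $m\le n$ is precisely what guarantees that the remaining $m-1$ blocks fit into distinct edges of the $H$-path. I would also record the harmless convention that a Hamiltonian cycle has at least three vertices, which is needed only to exclude the degenerate case $n=m=1$ with $|V(G_1)|=1$ (where $G\cong K_2$).
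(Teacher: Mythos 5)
Your proof is correct and follows essentially the same route as the paper's: the necessity direction deletes $S=V(H)$ and invokes Theorem~\ref{l3}, and the sufficiency direction builds the same Hamiltonian cycle (the paper writes it as $v_1-P_2-v_2-\cdots-v_{m-1}-P_m-v_m-\cdots-v_n-P_1-v_1$, which is your construction up to relabeling of the blocks). Your added remarks on why one block must be reserved to close the cycle and on the degenerate $K_2$ case are careful touches the paper omits, but they do not change the argument.
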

\begin{proof}
For each $r=1,2,\ldots,m$, let $P_r$ be a Hamiltonian path in $G_r$ and let $P:v_1-v_2-\dots-v_n$ be a Hamiltonian path in $H$.
If $m\leq n$, then $v_1-P_2-v_2-P_3-v_3-\dots-v_{m-1}-P_m-v_m-v_{m+1}\dots-v_n-P_1-v_1$ is a Hamiltonian cycle in $G$.
If $m>n$, we take $S=V(H)$. Then $c(G-S)>|S|$ and so by Theorem~\ref{l3}, $G$ is non-Hamiltonian.
\end{proof}

\begin{thm}\label{permutability nonabelian t25}
\
\begin{enumerate}[{\normalfont (i)}]
\item If $n=p^\alpha$, where $p$ is prime, $\alpha\geq 1$, then $\Gamma(D_n)$ is Hamiltonian if and only if $p=2$ and $\alpha\geq 2$.
\item If $n=pq$, where $p$, $q$ are distinct primes, $p < q$, then $\Gamma(D_n)$ is Hamiltonian if and only if $p=2$ and $q\leq 5$.
\end{enumerate}
\end{thm}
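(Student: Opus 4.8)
The plan is to treat the two directions of each biconditional separately: Hamiltonicity is read off from the explicit structural decompositions of $\Gamma(D_n)$ recorded in Corollary~\ref{permutability nonabelian c8} together with the gluing Lemma~\ref{l2}, while non-Hamiltonicity comes from the counting obstruction of Theorem~\ref{l3} as packaged in Theorem~\ref{l1}. Thus the whole argument reduces to a finite case split according to the shape of $n$.

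First I would dispose of the Hamiltonian cases. In part~(i), if $p=2$ then (since $\Gamma(D_n)$ is defined only for $n\geq 3$) we must have $\alpha\geq 2$; by Corollary~\ref{permutability nonabelian c8}(i), $\Gamma(D_{2^\alpha})\cong K_{\alpha+2}+\Gamma_N(D_{2^\alpha})$, and by~\eqref{e71} the graph $\Gamma_N(D_{2^\alpha})$ has exactly two components, each containing a Hamiltonian path by Theorem~\ref{c1}. Applying Lemma~\ref{l2} with $H=K_{\alpha+2}$ and $m=2$, the hypothesis $m\leq|V(H)|=\alpha+2$ holds, so $\Gamma(D_{2^\alpha})$ is Hamiltonian. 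In part~(ii), if $p=2$ and $q\in\{3,5\}$, then by Corollary~\ref{permutability nonabelian c8}(iii), $\Gamma(D_{2q})\cong K_5+\Gamma_N(D_{2q})$ with $\Gamma_N(D_{2q})\cong qK_3$ by~\eqref{e73}; each $K_3$ has a Hamiltonian path, so Lemma~\ref{l2} with $H=K_5$ and $m=q$ applies precisely because $q\leq 5=|V(K_5)|$, giving Hamiltonicity. Note that $q=5$ is the tight boundary case.

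Next I would handle the non-Hamiltonian cases by verifying the inequality $\tau(n)+\sigma(n)<2(n+1)$ and invoking Theorem~\ref{l1}. For part~(i) with $p$ odd, using $\tau(p^\alpha)=\alpha+1$ and $\sigma(p^\alpha)=p^\alpha+\sigma(p^{\alpha-1})$, the desired inequality simplifies to $\alpha-1+\sigma(p^{\alpha-1})<p^\alpha$, which follows from $\sigma(p^{\alpha-1})\leq(p^\alpha-1)/2$ for $p\geq 3$. For part~(ii) with $p=2$ and $q\geq 7$, one computes $\tau(2q)+\sigma(2q)=7+3q$, and $7+3q<4q+2=2(2q+1)$ holds exactly because $q>5$. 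For part~(ii) with $p$ odd (so $p\geq 3$ and $q\geq 5$), $\tau(pq)+\sigma(pq)=5+p+q+pq$, and $5+p+q+pq<2pq+2$ is equivalent to $(p-1)(q-1)>4$, which always holds. In each subcase Theorem~\ref{l1} yields non-Hamiltonicity, and combining with the previous paragraph completes both biconditionals.

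I do not expect a single deep obstacle: the non-Hamiltonian side is routine arithmetic and the Hamiltonian side is a direct application of Lemma~\ref{l2}. The points that require care are (a) confirming that the graphs glued onto the complete part indeed carry Hamiltonian paths — this is exactly where Theorem~\ref{c1} is indispensable for the $2$-power case, while the obvious Hamiltonicity of $K_3$ suffices for the $n=2q$ case — and (b) checking that the thresholds $\alpha\geq 2$ and $q\leq 5$ coincide precisely with the region where $m\leq|V(H)|$ in Lemma~\ref{l2}, so that the complementary cases are exactly those captured by the inequality in Theorem~\ref{l1}; making these two boundaries meet cleanly is the crux of the bookkeeping.
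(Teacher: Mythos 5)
Your proposal is correct and follows essentially the same route as the paper: the Hamiltonian cases come from the decompositions in Corollary~\ref{permutability nonabelian c8} combined with Theorem~\ref{c1} and Lemma~\ref{l2}, and the non-Hamiltonian cases come from deleting the independent set $\mathcal A_1$ and counting components (which is exactly the content of Theorem~\ref{l1}, which the paper re-derives inline rather than citing). The only cosmetic difference is that for $p=2$, $q\geq 7$ you invoke Theorem~\ref{l1} where the paper uses the ``only if'' direction of Lemma~\ref{l2}; both are valid.
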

\begin{proof}(i): We need to consider the following cases:

\noindent\textbf{Case a.} $p=2$.  By Corollary~\ref{permutability nonabelian c8}(i), $\displaystyle \Gamma(D_n)\cong H+\bigcup_{r=1}^2 G_r$,
where $H\cong K_{\alpha+2}$
and $G_r\cong \Gamma(D_{\frac{n}{2}})+K_2$. Now by Theorem~\ref{c1} and Lemma~\ref{l2}, $\Gamma(D_n)$ is Hamiltonian.

\noindent \textbf{Case b.} $p>2$.
Let $S=V(\Gamma(D_n))-\mathcal A_1$. Then $|S|=\tau(n)+\sigma(n)-(n+2)< n=|\mathcal A_1|=c(\Gamma(D_n)-S)$. So by Theorem~\ref{l3}, $\Gamma(D_n)$ is non-Hamiltonian.

%

\noindent (ii): We give the proof in the following cases:

\noindent \textbf{Case a.} $p=2$. Then by Corollary~\ref{permutability nonabelian c8}(iii),  $\Gamma(D_n)\cong H+\displaystyle \bigcup_{r=1}^q G_r$,
where $H\cong K_5$ and for each $r=1,2,\ldots,q$, $G_r\cong K_3$. Then by Lemma~\ref{l2}, $\Gamma(D_n)$ is Hamiltonian if and only if $q\leq 5$.

 \noindent \textbf{Case b.} $p\geq 3$ and $q\geq 5$.
 Let $S=V(\Gamma(D_n))-\mathcal A_1$. Then $|S|=\tau(n)+\sigma(n)-(n+4)< n=|\mathcal A_1|=c(\Gamma(D_n)-S)$. So by Theorem~\ref{l3}, $\Gamma(D_n)$ is non-Hamiltonian.
\end{proof}

\section{Quaternion groups}\label{sec:qua}
For any integer $n>1$, the quaternion group of order $4n$, is defined by
\begin{equation*}
Q_n=\langle a,b|a^{2n}=b^4=1,b^2=a^n,ab=ba^{-1}\rangle.
\end{equation*}
The subgroups of $Q_n$ are listed below:
\begin{itemize}
\item [(i)] cyclic groups $H_{0,r}=\langle a^{\frac{2n}{r}}\rangle$, of order $r$, where $r$ is a divisor of $2n$;
\item [(ii)]cyclic groups $H_{i,1}=\langle a^ib\rangle$ of order 4, where $i=1,\ldots,n$;
\item [(iii)] quaternion groups $H_{i,r}=\langle a^{\frac{n}{r}},a^ib\rangle$ of order $4r$, where $r$ is a divisor of $n$, $i=1,\ldots,\frac{n}{r}$.
\end{itemize}
\noindent The proper normal subgroups of $Q_n$ are the subgroups $H_0^r$, $r\neq 1$ listed in (i), when $n$ is odd; the subgroups $H_0^r$, $r\neq 1$
listed in (i) and $H_{i,\frac{n}{2}}$, $i=1,2$ of index 2, when $n$ is even. Thus
\begin{equation}\label{e61}
|L(Q_n)|=\tau(2n)+\sigma(n),
\end{equation}
\noindent and so
\begin{equation*}
|V(\Gamma(Q_n))|=\tau(2n)+\sigma(n)-2,
\end{equation*}
\begin{equation*}
|V(\Gamma_N(D_n)|=
\begin{cases}
		\sigma(n)-1, & \mbox{if~}n~\mbox{is}~\mbox{odd}; \\
		\sigma(n)-3, &   \mbox{otherwise.}
	\end{cases}
\end{equation*}
\noindent It is well known that $Z(Q_n)\cong \langle a^n\rangle$ is the  unique minimal subgroup of $Q_n$ and
\begin{equation*}
\frac{Q_n}{Z(Q_n)}\cong D_{n}.
\end{equation*}
\noindent For each positive divisors $r,s$ of $n$ and $i\in\{1,2\ldots,\frac{n}{r}\}$, $j\in \{1,2\ldots,\frac{n}{s}\}$, consider the
subgroups $H_{i,r}$ and $H_{j,s}$,
of $Q_n$.

$H_{i,r}$ and
$H_{j,s}$ permutes if and only if $\displaystyle\frac{H_{i,r}}{Z(Q_n)}$ and $\displaystyle \frac{H_{j,s}}{Z(Q_n)}$ permutes.

\noindent But
\begin{equation*}
\frac{H_{i,r}}{Z(Q_n)}\cong \langle x^{\frac{n}{r}},yx^{i'-1}\rangle\cong H_{i'}^r
\end{equation*}
\noindent and
\begin{equation*}
 \frac{H_{j,s}}{Z(Q_n)}\cong \langle x^{\frac{n}{s}},yx^{j'-1}\rangle\cong H_{j'}^s,
\end{equation*}
\noindent where
$x=a\langle a^n\rangle$, $y=b\langle a^n\rangle, i=q_1r+i'$, $j=q_2s+j'$, $0\leq i'<r$ and  $0\leq j'<s$ for some $q_1$, $q_2 \in \mathds{Z}$.

Thus $H_{i,r}$ and $H_{j,s}$ permutes if and only if $H_i^r$ and $H_j^s$ permutes.

 In view of these together with the necessary and sufficient condition for the permutability of dihedral subgroups discussed in Section~\ref{sec:dih}, we have the following:

 $H_{i,r}$ permutes with $H_{j,s}$ if and only if $x^{2(i'-j')}\in \langle x^{\frac{n}{[r,s]}}\rangle$, that is,
\begin{equation}\label{e8}
\frac{n}{[r,s]}~\Big|~2(i'-j').
\end{equation}
Let $r$ be a fixed positive divisor of $n$ and $i\in\{1,2,\ldots,\frac{n}{r}\}$. Suppose that $i=q'r+i'$, where $0\leq i'<r$, $q'\in \mathds{Z}$. Then
it is easy to see that  $x_{i'}^r$, the number of solution of \eqref{e8} is equal to $x_i^r$, the number of solution of \eqref{e11}.

In view of these facts, we have the following result.

\begin{thm}\label{permutability nonabelian t42}
Let $n\geq 3$ be an integer. Then $\Gamma_N(Q_n) \cong \Gamma_N(D_n)$.
\end{thm}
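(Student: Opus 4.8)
The plan is to show that the map sending a non-normal subgroup $H$ of $Q_n$ to the corresponding non-normal subgroup of $D_n$ is a graph isomorphism $\Gamma_N(Q_n)\to\Gamma_N(D_n)$. The work done in the excerpt leading up to this statement has essentially already assembled all the pieces: the quotient $Q_n/Z(Q_n)\cong D_n$, the explicit bijection between subgroups, and the fact that permutability of subgroups of $Q_n$ containing $Z(Q_n)$ is equivalent to permutability of their images in $D_n$. So the proof is mostly a matter of organizing these facts and checking the vertex sets match up.

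First I would set up the vertex correspondence. Every subgroup of $Q_n$ contains the unique minimal subgroup $Z(Q_n)=\langle a^n\rangle$, except that one must be slightly careful: the subgroups $H_{0,r}$ of the cyclic part for $r$ even contain $a^n$, and $H_{0,1}$, $H_{0,2}$ are cyclic of orders $1,2$. In fact $Z(Q_n)=H_{0,2}$ when $n$ is the relevant parameter, and every nontrivial subgroup of $Q_n$ contains $Z(Q_n)$ since $Z(Q_n)$ is the unique minimal subgroup. Thus the correspondence theorem gives a lattice isomorphism between the subgroups of $Q_n$ containing $Z(Q_n)$ — i.e. all nontrivial subgroups of $Q_n$ — and the subgroups of $D_n\cong Q_n/Z(Q_n)$. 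Under this isomorphism $H_{i,r}\mapsto H_{i'}^r$ in the notation already fixed in the excerpt, proper subgroups go to proper subgroups, and normal subgroups go to normal subgroups (normality is preserved by the correspondence theorem in both directions). Hence the proper non-normal subgroups of $Q_n$ biject with the proper non-normal subgroups of $D_n$; this also matches the vertex counts recorded just after \eqref{e61}, which agree with \eqref{e110}.

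Next I would verify that this bijection preserves adjacency. Two vertices $H_{i,r}$, $H_{j,s}$ of $\Gamma_N(Q_n)$ are adjacent iff $H_{i,r}H_{j,s}=H_{j,s}H_{i,r}$. Since both subgroups contain the normal subgroup $Z(Q_n)$, a standard fact about the correspondence theorem gives $H_{i,r}H_{j,s}/Z(Q_n)=(H_{i,r}/Z(Q_n))(H_{j,s}/Z(Q_n))$, so $H_{i,r}$ and $H_{j,s}$ permute in $Q_n$ iff their images permute in $D_n$ — this is exactly the equivalence displayed in the excerpt, and it reduces via \eqref{e8} and \eqref{e11} to the same divisibility condition $\tfrac{n}{[r,s]}\mid 2(i'-j')$. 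Therefore $H_{i,r}\sim H_{j,s}$ in $\Gamma_N(Q_n)$ iff $H_{i'}^r\sim H_{j'}^s$ in $\Gamma_N(D_n)$, which is precisely what adjacency-preservation requires. Combining the vertex bijection with this equivalence yields $\Gamma_N(Q_n)\cong\Gamma_N(D_n)$.

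The only mild obstacle is bookkeeping at the bottom of the subgroup lattice: one must confirm that the ``extra'' small subgroups ($Z(Q_n)$ itself, and the order-$4$ cyclic subgroups $H_{i,1}$ versus the order-$2$ subgroups $H_i^1$ of $D_n$) line up correctly under the quotient and that none of them is spuriously normal or non-normal on one side but not the other. This is handled by the explicit lists of subgroups and normal subgroups given for both groups in the excerpt, together with the observation that $H_{i,1}/Z(Q_n)$ is a subgroup of order $2$ in $D_n$, i.e. some $H_{i'}^1$. Once that is checked, no computation beyond what has already been cited is needed, and the isomorphism follows formally.
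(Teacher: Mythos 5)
Your proof is correct and follows essentially the same route as the paper, which in effect proves this theorem in the discussion immediately preceding its statement: the subgroup correspondence modulo $Z(Q_n)=\langle a^n\rangle$, preservation of normality under that correspondence, and the equivalence of permutability of $H_{i,r}$, $H_{j,s}$ with that of their images in $Q_n/Z(Q_n)\cong D_n$, reducing to the same divisibility condition \eqref{e11}. One small caveat, inherited from the paper: $\langle a^n\rangle$ is the unique minimal subgroup of $Q_n$ only when $n$ is a power of $2$ (otherwise $Q_n$ has subgroups of odd prime order avoiding it), but all that your argument actually needs is that every \emph{non-normal} subgroup contains $a^n=(a^ib)^2$, which holds for all $n$.
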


\begin{thm}\label{t12}
Let $n>1$ be an integer. Then
$\Gamma(Q_n)\cong K_r+\Gamma_N(D_n)$,
where $r=\tau(2n)+1$ if $n$ is even; $r=\tau(2n)-1$ if $n$ is odd.
\end{thm}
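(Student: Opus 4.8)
The plan is to mimic the proof of Theorem~\ref{t11} verbatim, since the situation for $Q_n$ is completely parallel to that for $D_n$ once the preparatory work has been done. The key tool is the general principle recorded in Theorem~\ref{permutability nonabelian t1}: for a finite non-simple group $G$ with $r$ proper normal subgroups, $\Gamma(G)\cong K_r+\Gamma_N(G)$ provided $r\neq|L(G)|-2$ (i.e.\ provided $G$ is not a Dedekind group, which $Q_n$ is not for $n\geq 2$ since, e.g., the order-$4$ subgroups $H_{i,1}$ are non-normal). So the whole statement reduces to two bookkeeping facts: first, that $\Gamma_N(Q_n)\cong\Gamma_N(D_n)$, and second, that the number of proper normal subgroups of $Q_n$ equals $\tau(2n)+1$ when $n$ is even and $\tau(2n)-1$ when $n$ is odd.

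First I would invoke Theorem~\ref{permutability nonabelian t42}, already established just above, which gives $\Gamma_N(Q_n)\cong\Gamma_N(D_n)$; this lets us replace $\Gamma_N(Q_n)$ by $\Gamma_N(D_n)$ in the conclusion. Next I would count the proper normal subgroups of $Q_n$ using the explicit list of subgroups given in this section: the proper normal subgroups are exactly the cyclic subgroups $H_{0,r}$ with $r\mid 2n$, $r\neq 1$ (there are $\tau(2n)-1$ of these), together with, when $n$ is even, the two index-$2$ quaternion subgroups $H_{i,n/2}$, $i=1,2$. Hence the number $r$ of proper normal subgroups is $\tau(2n)-1$ if $n$ is odd and $(\tau(2n)-1)+2=\tau(2n)+1$ if $n$ is even, matching the statement. (One should note that $H_{0,2}=Z(Q_n)$ is among the normal subgroups counted, and that since $\Gamma_N(D_n)$ is a genuine partite graph with nonempty vertex set whenever it is defined, we are never in the degenerate case $r=|L(Q_n)|-2$; by~\eqref{e61} this degenerate case would force $\Gamma_N(Q_n)$ to be empty, which only happens for small $n$ and can be dismissed, or else handled exactly as in Theorem~\ref{t11}, where Theorem~\ref{t33} plays this role.)

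Finally, feeding $r$ into Theorem~\ref{permutability nonabelian t1} yields $\Gamma(Q_n)\cong K_r+\Gamma_N(Q_n)\cong K_r+\Gamma_N(D_n)$ with the claimed value of $r$, completing the proof. I do not anticipate a genuine obstacle here: the only point requiring a little care is verifying that $Q_n$ is non-Dedekind and that $r\neq |L(Q_n)|-2$ so that the first branch of Theorem~\ref{permutability nonabelian t1} applies, and both follow immediately from the subgroup list (the order-$4$ cyclic subgroups $H_{i,1}$ are non-normal) together with~\eqref{e61} and the fact that $\Gamma_N(D_n)$ is nonempty. Everything else is a direct transcription of the argument used for Theorem~\ref{t11}, with $\tau(n)$ replaced by $\tau(2n)$ because of the extra divisor structure coming from the cyclic part $\langle a\rangle$ of order $2n$ in $Q_n$.
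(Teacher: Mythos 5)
Your proof is correct and follows essentially the same route as the paper: count the proper normal subgroups of $Q_n$ ($\tau(2n)-1$ cyclic ones, plus the two index-$2$ quaternion subgroups when $n$ is even), invoke Theorem~\ref{permutability nonabelian t42} to replace $\Gamma_N(Q_n)$ by $\Gamma_N(D_n)$, and apply the join decomposition of Theorem~\ref{permutability nonabelian t1}. The only caveat --- one the paper itself also glosses over --- is the boundary case $n=2$, where $Q_2=Q_8$ is Dedekind (so your claim that the $H_{i,1}$ are non-normal fails there) and $\Gamma_N(D_2)$ is undefined; your remark that small $n$ must be dismissed separately is the right instinct.
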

\begin{proof}
Since the number of normal subgroups of $Q_n$ is $\tau(2n)+1$ if $n$ is even; $\tau(2n)-1$ if $n$ is odd, so the proof follows by
Theorems~\ref{permutability nonabelian t1}, \ref{permutability nonabelian t42} and \ref{t33}.
\end{proof}

\begin{cor}\label{permutability nonabelian c25}Let $n>1$ be an integer.
\begin{enumerate}[{\normalfont (i)}]
\item
$\deg_{\Gamma(Q_n)}(H_{0,r})=\tau(2n)+\sigma(n)-3$, for every divisor $r$ of $2n$, $r\neq 2n$.
\item
$\deg_{\Gamma(Q_n)}(H_{i,r})=\tau(2n)+{x_i}^r-3$, for every divisor $r$ of $n$, $r\neq n$, $i\in\{1,2,\ldots,\frac{n}{r}\}$, where $x_i^r$ is
given by \eqref{e17} if $n$ is odd;
where $x_i^r$ is given by \eqref{e50} if $n=2^\alpha$, $\alpha\geq 1$; $x_i^r$ is given by \eqref{e13} if $n=2^{\alpha}n'$, with $n'$ is odd and $\alpha\geq 1$.
\end{enumerate}
\end{cor}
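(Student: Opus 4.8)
The plan is to deduce the corollary from the join decomposition of $\Gamma(Q_n)$, exactly as Corollary~\ref{permutability nonabelian c24} is deduced for $D_n$ from Theorem~\ref{permutability nonabelian t40}. By Theorem~\ref{t12}, $\Gamma(Q_n)\cong K_{\rho}+\Gamma_N(D_n)$, where $\rho$ is the number of proper normal subgroups of $Q_n$; explicitly $\rho=\tau(2n)-1$ if $n$ is odd and $\rho=\tau(2n)+1$ if $n$ is even. Moreover $\Gamma_N(Q_n)\cong\Gamma_N(D_n)$ by Theorem~\ref{permutability nonabelian t42}. Assuming $n\geq 3$ (so that $\Gamma_N(Q_n)$ is defined and $Q_n$ has a proper non-normal subgroup), we are in the case $\rho\neq|L(Q_n)|-2$ of Corollary~\ref{permutability nonabelian c1}, so part~(i) of that corollary applies to $G=Q_n$; we also use $|L(Q_n)|=\tau(2n)+\sigma(n)$ from~\eqref{e61}.

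For part~(i): each $H_{0,r}=\langle a^{2n/r}\rangle$ lies inside the cyclic normal subgroup $\langle a\rangle$ of $Q_n$, hence is normal in $Q_n$ (it also appears in the list of proper normal subgroups of $Q_n$ recorded in this section). Therefore Corollary~\ref{permutability nonabelian c1}(i) gives $\deg_{\Gamma(Q_n)}(H_{0,r})=|L(Q_n)|-3=\tau(2n)+\sigma(n)-3$.

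For part~(ii): fix a divisor $r$ of $n$ with $r\neq n$ and $i\in\{1,\dots,\tfrac{n}{r}\}$. Suppose first that $H_{i,r}$ is non-normal in $Q_n$. Then Corollary~\ref{permutability nonabelian c1}(i) together with Theorem~\ref{permutability nonabelian t42} gives $\deg_{\Gamma(Q_n)}(H_{i,r})=\rho+\deg_{\Gamma_N(Q_n)}(H_{i,r})=\rho+\deg_{\Gamma_N(D_n)}(H_i^r)$, and Theorem~\ref{permutability nonabelian t40} evaluates the last term: if $n$ is odd it equals $(\tau(2n)-1)+(x_i^r-2)=\tau(2n)+x_i^r-3$, and if $n$ is even it equals $(\tau(2n)+1)+(x_i^r-4)=\tau(2n)+x_i^r-3$, where in each case $x_i^r$ is the quantity from~\eqref{e17}, \eqref{e50} or~\eqref{e13} according to the factorisation of $n$, as in the $\Gamma_N(D_n)$ statement. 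It remains to treat the subcase in which $H_{i,r}$ is normal, which (by the list of normal subgroups, via the correspondence $H_{i,r}\mapsto H_{i,r}/Z(Q_n)$) occurs precisely when $n$ is even and $r=\tfrac{n}{2}$, forcing $i\in\{1,2\}$; here Corollary~\ref{permutability nonabelian c1}(i) gives $\deg_{\Gamma(Q_n)}(H_{i,r})=|L(Q_n)|-3=\tau(2n)+\sigma(n)-3$, which coincides with $\tau(2n)+x_i^{n/2}-3$ because $x_i^{n/2}=\sigma(n)$ by~\eqref{e81}. Collecting the cases proves~(ii).

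I expect no real obstacle: the argument is bookkeeping on top of Theorems~\ref{permutability nonabelian t40}, \ref{permutability nonabelian t42}, \ref{t12} and Corollary~\ref{permutability nonabelian c1}. The one point that needs a little care is that the parity of $n$ enters in two places at once — through the size $\rho$ of the complete part and through the ``$-2$ versus $-4$'' correction in Theorem~\ref{permutability nonabelian t40} — and one must check that these offsets cancel to the uniform constant $-3$, and that the formula in~(ii) also covers the index-$2$ subgroups $H_{i,n/2}$ by invoking $x_i^{n/2}=\sigma(n)$ from~\eqref{e81}.
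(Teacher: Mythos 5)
Your proposal is correct and follows essentially the same route as the paper, whose proof is a one-line citation of Theorems~\ref{permutability nonabelian t42}, \ref{permutability nonabelian t40}, Corollary~\ref{permutability nonabelian c1} and \eqref{e61}; you simply carry out the bookkeeping those citations leave implicit, including the cancellation of the parity-dependent offsets to the uniform constant $-3$ and the check that the index-$2$ subgroups $H_{i,n/2}$ are covered via $x_i^{n/2}=\sigma(n)$.
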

\begin{proof}
Follows by Theorems~\ref{permutability nonabelian t42}, \ref{permutability nonabelian t40}, Corollary~\ref{permutability nonabelian c1} and \eqref{e61}.
\end{proof}

\begin{cor}\label{permutability nonabelian c36}Let $n>1$ be an integer and $g$ denotes the arithmetic function given by~\eqref{e18}.
 \begin{enumerate}[{\normalfont (i)}]
 \item If $n$ is odd, then
\begin{equation*}
|E(\Gamma(Q_n))|=\displaystyle \frac {1}{2}\big\{g(n)-5\sigma(n)+6+\tau(2n)(\tau(2n)+2\sigma(n)-5)\big\}.
\end{equation*}
\item  If $n=2^\alpha$, $\alpha\geq 2$, then
\begin{equation*}
|E(\Gamma(Q_n))|=\displaystyle \frac{1}{2}\big\{2^{\alpha+1}(6\alpha-5)+\alpha^2-3\alpha+10\big\}.
\end{equation*}

\item  If $n=2^\alpha n'$, with $n'$ is odd, $\alpha \geq 1$, then
\begin{equation*}
|E(\Gamma(Q_n))|=\displaystyle  \frac{1}{2}\big\{[(\alpha-1)2^{\alpha+3}+9] g(n')-5\sigma(n)+\tau(2n)+6 +\tau(2n)[\tau(2n)+2\sigma(n)-6]\big\}.
\end{equation*}
 \end{enumerate}
\end{cor}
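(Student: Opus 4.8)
The plan is to reduce everything to the general edge-count formula of Corollary~\ref{permutability nonabelian c1}(ii) together with the edge counts of $\Gamma_N(D_n)$ already established in Corollary~\ref{permutability nonabelian c33}. The two ingredients that make this possible are Theorem~\ref{permutability nonabelian t42}, which gives $\Gamma_N(Q_n)\cong\Gamma_N(D_n)$ (so in particular $|E(\Gamma_N(Q_n))|=|E(\Gamma_N(D_n))|$ in each of the three cases), and Theorem~\ref{t12}, which gives $\Gamma(Q_n)\cong K_r+\Gamma_N(D_n)$ with $r=\tau(2n)+1$ when $n$ is even and $r=\tau(2n)-1$ when $n$ is odd; equivalently, $r$ is the number of proper normal subgroups of $Q_n$. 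Using the lattice size $|L(Q_n)|=\tau(2n)+\sigma(n)$ from \eqref{e61}, Corollary~\ref{permutability nonabelian c1}(ii) yields
\[
|E(\Gamma(Q_n))|=|E(\Gamma_N(Q_n))|+\frac{r}{2}\bigl(2|L(Q_n)|-r-5\bigr),
\]
and the whole proof consists of substituting the appropriate values and simplifying case by case.

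First, for $n$ odd I would put $r=\tau(2n)-1$ and $|L(Q_n)|=\tau(2n)+\sigma(n)$, so that the parenthetical factor collapses to $\tau(2n)+2\sigma(n)-4$; combining this with $|E(\Gamma_N(Q_n))|=\tfrac12\{g(n)-3\sigma(n)+2\}$ from Corollary~\ref{permutability nonabelian c33}(i) and expanding $(\tau(2n)-1)(\tau(2n)+2\sigma(n)-4)=\tau(2n)(\tau(2n)+2\sigma(n)-5)-2\sigma(n)+4$ produces the asserted formula in part~(i). Next, for $n=2^{\alpha}$ with $\alpha\geq2$, I would insert $\tau(2n)=\alpha+2$, $\sigma(n)=2^{\alpha+1}-1$, $r=\alpha+3$, and $|E(\Gamma_N(Q_n))|=2^{\alpha}(4\alpha-11)+14$ from Corollary~\ref{permutability nonabelian c33}(ii); the one simplification worth isolating is $2^{\alpha+1}(4\alpha-11)+(\alpha+3)2^{\alpha+2}=2^{\alpha+1}(6\alpha-5)$, after which part~(ii) drops out. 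Finally, for $n=2^{\alpha}n'$ with $n'$ odd and $\alpha\geq1$, I would use $r=\tau(2n)+1$ and $|E(\Gamma_N(Q_n))|=\tfrac12\{[(\alpha-1)2^{\alpha+3}+9]g(n')-7\sigma(n)+12\}$ from Corollary~\ref{permutability nonabelian c33}(iii), writing the cross term as $(\tau(2n)+1)(\tau(2n)+2\sigma(n)-6)=\tau(2n)(\tau(2n)+2\sigma(n)-6)+\tau(2n)+2\sigma(n)-6$ to land on part~(iii).

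I do not expect any genuine obstacle here: the result is an immediate corollary of earlier structural theorems, and each case is elementary algebra. The only place to be careful is bookkeeping — one must consistently use $\tau(2n)$ (not $\tau(n)$) for $Q_n$, and must select the correct value of $r$, i.e.\ remember that $Q_n$ has two extra index-$2$ normal subgroups exactly when $n$ is even — so I would make those substitutions explicit before simplifying.
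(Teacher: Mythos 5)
Your proposal is correct and follows exactly the paper's own route: the paper proves this corollary by citing Theorem~\ref{permutability nonabelian t42}, Corollary~\ref{permutability nonabelian c33}, the lattice size \eqref{e61}, and the count of normal subgroups of $Q_n$ ($\tau(2n)\pm1$ according to parity), then implicitly applying Corollary~\ref{permutability nonabelian c1}(ii). You have simply written out the substitution and simplification that the paper leaves to the reader, and your algebra checks out in all three cases.
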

\begin{proof}
 Follows by Theorem~\ref{permutability nonabelian t42}, Corollary~\ref{permutability nonabelian c33}, \eqref{e61} and from the fact that the number of normal subgroups of $Q_n$ is $\tau(2n)+1$ if $n$ is even; $\tau(2n)-1$ if $n$ is odd.
\end{proof}

Now we characterize the values of $n$ for which $\Gamma(Q_n)$ is Eulerian.
\begin{cor}\label{permutability nonabelian c50}
Let $n> 1$ be an integer.
\begin{enumerate}[{\normalfont (i)}]
 \item If $n$ is odd, then $\Gamma(Q_n)$ is non-Eulerian.
\item If $n=2^\alpha$, $\alpha\geq 2$, then $\Gamma(Q_n)$ is Eulerian if and only if $\alpha$ is even.

\item  If $n=2^\alpha n'$, with $n'$ is odd, $n'=p_1^{\alpha_1}p_2^{\alpha_2}\ldots p_k^{\alpha_k}$,
where $p_i$'s are distinct primes and $\alpha$, $\alpha_i\geq 1$, then $\Gamma(Q_n)$ is Eulerian if and only if $\alpha$ is even and $\alpha_i$ is even, for every $i\in
\{1,2,\ldots,k\}$.
\end{enumerate}
\end{cor}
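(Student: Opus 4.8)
The plan is to reduce the Eulerian question for $\Gamma(Q_n)$ entirely to parity computations on the vertex degrees, exactly as was done for $\Gamma(D_n)$ in Corollary~\ref{permutability nonabelian c22}. Recall that a graph is Eulerian iff every vertex has even degree, so I must check parities of the two degree formulas given in Corollary~\ref{permutability nonabelian c25}: namely $\deg_{\Gamma(Q_n)}(H_{0,r})=\tau(2n)+\sigma(n)-3$ for divisors $r$ of $2n$ with $r\neq 2n$, and $\deg_{\Gamma(Q_n)}(H_{i,r})=\tau(2n)+x_i^r-3$ for divisors $r$ of $n$ with $r\neq n$. Since $\Gamma(Q_n)$ differs from $\Gamma(D_n)$ only in that $\tau(n)$ is replaced throughout by $\tau(2n)$ (the number of normal subgroups changes from $\tau(n)\pm 1$ to $\tau(2n)\pm 1$, and $|L(Q_n)|=\tau(2n)+\sigma(n)$ by~\eqref{e61}), the whole argument is a parity bookkeeping exercise with $\tau(2n)$ in place of $\tau(n)$.

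For part (i), $n$ odd: here $2n=2n'$ with $n'$ odd, so by Note~\ref{n1}(v) one has $\tau(2n)$ even (since $\alpha=1$ is odd in the factorization $2n=2^1 n$). Then $\deg_{\Gamma(Q_n)}(H_{0,r})=\tau(2n)+\sigma(n)-3$ has the parity of $\sigma(n)-1$, which is even iff $\sigma(n)$ is odd; but for the $H_{i,1}$ vertices, $x_i^1=\tau(n)$ by~\eqref{e121}, and $\tau(n)$ is odd iff $\sigma(n)$ is odd (Note~\ref{n1}(iii), $n$ odd), so $\deg_{\Gamma(Q_n)}(H_{i,1})=\tau(2n)+\tau(n)-3$ has the parity of $\tau(n)-1$, which is even iff $\tau(n)$ is odd. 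Whichever way $\sigma(n)$'s parity falls, one of these two classes of vertices has odd degree, so $\Gamma(Q_n)$ is always non-Eulerian.

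For parts (ii) and (iii) the plan is: first determine the parity of $\tau(2n)$ — for $n=2^\alpha$ we have $2n=2^{\alpha+1}$ so $\tau(2n)=\alpha+2$, which is even iff $\alpha$ is even; for $n=2^\alpha n'$ we have $2n=2^{\alpha+1}n'$ so $\tau(2n)=(\alpha+2)\tau(n')$. Then impose that $\deg(H_{0,r})=\tau(2n)+\sigma(n)-3$ is even, using Note~\ref{n1}(iv),(v) to pin down the parity of $\sigma(n)$ ($\sigma(2^\alpha)$ is odd; $\sigma(2^\alpha n')$ is odd iff $\tau(n')$ is odd iff every $\alpha_i$ is even), and separately impose that $\deg(H_{i,r})=\tau(2n)+x_i^r-3$ is even, using~\eqref{e50},\eqref{e13} for the parity of $x_i^r$. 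As in Corollary~\ref{permutability nonabelian c22}, $x_i^r$ from~\eqref{e50} is always odd, and $x_i^r$ from~\eqref{e13} is odd iff $\sigma(n')$ is odd. Conjoining the constraints coming from both vertex classes will yield exactly the stated conditions: $\alpha$ even in case (ii), and $\alpha$ even together with every $\alpha_i$ even in case (iii).

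I expect the main obstacle to be purely organizational rather than mathematical: one must carefully track two interacting parity conditions (one from the normal-subgroup vertices $H_{0,r}$, one from the dihedral-type vertices $H_{i,r}$) and confirm they are jointly equivalent to the claimed arithmetic condition, without double-counting or missing a case — in particular making sure the exponent shift from $2^\alpha$ to $2^{\alpha+1}$ in $\tau(2n)$ is the only change from the $D_n$ computation and correctly flips "odd" to "even" in the final answer relative to Corollary~\ref{permutability nonabelian c22}. A sanity check against small cases ($n=4$, $n=8$, $n=6$) would confirm the parity flip.
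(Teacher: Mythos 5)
Parts (ii) and (iii) of your plan are correct and follow essentially the same route as the paper: substitute $\tau(2n)$ for $\tau(n)$ in the degree formulas of Corollary~\ref{permutability nonabelian c25}, read off the parities of $\sigma(n)$ and $x_i^r$ from Note~\ref{n1} and \eqref{e50}, \eqref{e13}, and intersect the constraints coming from the two vertex classes; this does yield ``$\alpha$ even'' in (ii) and ``$\alpha$ even and every $\alpha_i$ even'' in (iii).

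Part (i), however, contains a genuine error. You correctly find that $\deg_{\Gamma(Q_n)}(H_{0,r})$ is even iff $\sigma(n)$ is odd, and that $\deg_{\Gamma(Q_n)}(H_{i,1})$ is even iff $\tau(n)$ is odd; but your concluding sentence treats these as complementary conditions, whereas by Note~\ref{n1}(iii) --- which you yourself cite --- they are the \emph{same} condition for odd $n$: $\tau(n)$ is odd iff $\sigma(n)$ is odd. Hence when $n$ is an odd perfect square both classes have even degree simultaneously, and so does every remaining vertex $H_{i,r}$, since each summand of $x_i^r=\sum_{s\mid n}[r,s]/s$ is odd, giving $x_i^r\equiv\tau(n)\pmod 2$ and $\deg_{\Gamma(Q_n)}(H_{i,r})\equiv\tau(n)-1\pmod 2$. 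Concretely, for $n=9$ the vertex degrees are $16$, $6$ and $8$, and the graph is connected (it is a join with $K_5$), so $\Gamma(Q_9)$ is Eulerian. The gap is therefore not repairable: statement (i) itself fails for odd squares, and the paper's own one-line proof --- which asserts without qualification that $\deg_{\Gamma(Q_n)}(H_{i,1})$ is odd --- has exactly the same defect. The correct assertion for odd $n>1$ is that $\Gamma(Q_n)$ is Eulerian if and only if $\tau(n)$ is odd, i.e., if and only if $n$ is a perfect square.
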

\begin{proof}
\noindent{(i)}: By Corollary~\ref{permutability nonabelian c25}(ii), $\deg_{\Gamma(Q_n)}(H_{i,1})$ is odd.
So proof follows.

\noindent{(ii)}: By Corollary~\ref{permutability nonabelian c25}(i), $\deg_{\Gamma(Q_n)}(H_{0,r})$
 is even if
and only if either $\tau(2n)$ is odd and $\sigma(n)$ is even or $\tau(2n)$ is even and $\sigma(n)$ is odd. By Note~\ref{n1}, $\sigma(n)$ is odd, so
we must have $\tau(2n)$ is even. By Note~\ref{n1}, $\tau(2n)$ is even if and only if $\alpha$ is even.
Now by Corollary~\ref{permutability nonabelian c25}(ii), $\deg_{\Gamma(Q_n)}(H_{i,r})$ is even if and only if $x_i^r$ is odd.
By the above argument $\tau(2n)$ is even and by \eqref{e50}, $x_i^r$ is
odd. So the proof follows.

\noindent{(iii)}. By Corollary~\ref{permutability nonabelian c25}(i), $\deg_{\Gamma(Q_n)}(H_{0,r})$
is even if
and only if either $\tau(2n)$ is even and  $\sigma(n)$ is odd or $\tau(2n)$ is odd and $\sigma(n)$ is even. Now we haver to consider following two cases.

\noindent\textbf{Case a.} $\alpha$ is odd. By Note~\ref{n1}, there is no such  $n$ exist.

\noindent\textbf{Case b.} $\alpha$ is even. By Note~\ref{n1}, $\tau(2n)$ is even, so we must have $\sigma(n)$ is odd. But $\sigma(n)$ is odd if and only if
$\sigma(n')$ is odd, by Note~\ref{n1}, $\sigma(n')$ is odd if and only if $\alpha_i$ is even for every $i\in\{1,2,\ldots,k\}$. Now
by Corollary~\ref{permutability nonabelian c25}(ii),
$\deg_{\Gamma(Q_n)}(H_{i,r})$ is even if and only if either $\tau(2n)$ is even and $x_i^r$ is odd or $\tau(2n)$ is odd and
$x_i^r$ is even. But by the above argument  $\tau(2n)$ is even, so we must have $x_i^r$ is odd. By \eqref{e13} $x_i^r$ is odd if and only if $\sigma(n')$
is odd. Thus by Note~\ref{n1}, $x_i^r$ is odd if and only if $\alpha_i$ is even for every $i\in\{1,2,\ldots,k\}$.
 \end{proof}

 \begin{cor}\label{permutability nonabelian c7}
Let $n> 1$ be an integer.

\begin{enumerate}[{\normalfont (i)}]
\item $\alpha(\Gamma(Q_n))=
 	\begin{cases}
 		n, & \mbox{~~if~ } n \mbox{~is odd};  \\
 		\frac{n}{2}, & \mbox{~~if~ } n \mbox{~is even}
 	\end{cases}$
\item $\omega(\Gamma(Q_n))=\chi(\Gamma(Q_n))=
 	\begin{cases}
 		\tau(2n)+\tau(n)-2, & \mbox{if~} n \mbox{~is odd};  \\
 		\tau(2n)+(2\alpha+1)\tau(n')-2, & \mbox{if~} n=2^\alpha n', \mbox{with~} n' \mbox{~is odd.}
  	\end{cases}$

In particular, $\Gamma(Q_n)$ is weakly perfect.
\end{enumerate}
\end{cor}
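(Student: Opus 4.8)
The plan is to reduce everything to the corresponding statements for $\Gamma_N(D_n)$ already established in Theorem~\ref{t37}, using the structural decomposition of $\Gamma(Q_n)$ from Theorem~\ref{t12} together with Corollary~\ref{permutability nonabelian c1}. The point is that $\Gamma(Q_n)$ is a join of a complete graph on the proper normal subgroups of $Q_n$ with $\Gamma_N(Q_n)$, and $\Gamma_N(Q_n)\cong\Gamma_N(D_n)$ by Theorem~\ref{permutability nonabelian t42}, so every invariant we need is already known.

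First I would record that the number of proper normal subgroups of $Q_n$ is $r=\tau(2n)-1$ when $n$ is odd and $r=\tau(2n)+1$ when $n$ is even, and that $Q_n$ has at least one proper non-normal subgroup (any $H_{i,1}$ will do), so that $r\neq |L(Q_n)|-2$ and the hypotheses of Corollary~\ref{permutability nonabelian c1} are satisfied. Since $\Gamma_N(Q_n)\cong\Gamma_N(D_n)$, the independence number, clique number and chromatic number of $\Gamma_N(Q_n)$ equal those of $\Gamma_N(D_n)$ given in Theorem~\ref{t37}.

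For part~(i), Corollary~\ref{permutability nonabelian c1}(iii) gives $\alpha(\Gamma(Q_n))=\alpha(\Gamma_N(Q_n))=\alpha(\Gamma_N(D_n))$, and Theorem~\ref{t37}(i) then yields the value $n$ (resp.\ $\frac{n}{2}$) according as $n$ is odd (resp.\ even). For part~(ii), Corollary~\ref{permutability nonabelian c1}(iv)--(v) gives $\omega(\Gamma(Q_n))=r+\omega(\Gamma_N(D_n))$ and $\chi(\Gamma(Q_n))=r+\chi(\Gamma_N(D_n))$; substituting the two values of $r$ above together with $\omega(\Gamma_N(D_n))=\chi(\Gamma_N(D_n))=\tau(n)-1$ for $n$ odd and $=(2\alpha+1)\tau(n')-3$ for $n=2^\alpha n'$ from Theorem~\ref{t37}(iii), a one-line simplification ($\tau(2n)-1+\tau(n)-1$ in the odd case, $\tau(2n)+1+(2\alpha+1)\tau(n')-3$ in the even case) produces the claimed formulas. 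Because $\omega$ and $\chi$ coincide in every case, $\Gamma(Q_n)$ is weakly perfect.

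There is essentially no hard step; the only care needed is the bookkeeping of the normal-subgroup count $r$ (hence the split into the two parities of $n$) and the verification of the mild nondegeneracy hypothesis $r\neq|L(Q_n)|-2$ of Corollary~\ref{permutability nonabelian c1}, which fails only for the Dedekind group $Q_2$ and is therefore harmless whenever $\Gamma_N(Q_n)$ is defined.
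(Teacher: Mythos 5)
Your proof is correct and follows essentially the same route as the paper: reduce to $\Gamma_N(Q_n)\cong\Gamma_N(D_n)$ via Theorem~\ref{permutability nonabelian t42}, apply Corollary~\ref{permutability nonabelian c1}(iii)--(v) with $r=\tau(2n)\mp1$ according to the parity of $n$, and read off the values from Theorem~\ref{t37}. You are in fact slightly more careful than the paper in checking the nondegeneracy hypothesis $r\neq|L(Q_n)|-2$ (which excludes only the Dedekind case $n=2$, where the formulas can be verified directly from $\Gamma(Q_2)\cong K_4$).
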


\begin{proof}
\noindent (i): Proof follows by Theorems~\ref{permutability nonabelian t42}, \ref{t37} and Corollary~\ref{permutability nonabelian c1}.

\noindent (ii): Since the number of proper normal subgroups of $Q_n$ is $\tau(2n)-1$ if $n$ is odd; $\tau(2n)+1$ if $n$ is even,  so the proof follows from
Theorems~\ref{permutability nonabelian t42}, \ref{t37} and Corollary~\ref{permutability nonabelian c1}.
\end{proof}

In the next result, we describe the structure of $\Gamma(Q_n)$ for some values of $n$.
\begin{thm}\label{permutability nonabelian c9}
Let $n>1$ be an integer.
\begin{enumerate}[{\normalfont (i)}]
\item If $n=2^\alpha$, $\alpha\geq 1$, then
\begin{equation*}
\Gamma(Q_n)\cong
	\begin{cases}
		K_4, & \mbox{if~ } \alpha = 1;   \\
		K_{\alpha+3}+\Gamma_N(D_n), & \mbox{otherwise}.
	\end{cases}
\end{equation*}
 where $\Gamma_N(D_n)$ is given by \eqref{e71}.
\item If $n=p^\alpha$, where $p$ is a prime, $p>2, \alpha\geq1$, then
$\Gamma(Q_n)\cong K_{2\alpha+1}+\Gamma_N(D_n)$,
where $\Gamma_N(D_n)$ given by~\eqref{e72}.
\item If $n=2p$, where $p$ is a prime, $p>2$, then
$\Gamma(Q_n)\cong K_7+\Gamma_N(D_n)$,
where $\Gamma_N(D_n)$ given by
\eqref{e73}.
\item If $n=pq$, where $p$, $q$ are primes, $2< p< q$, then
$\Gamma(Q_n)\cong K_7+\Gamma_N(D_n)$,
where $\Gamma_N(D_n)$ given by
\eqref{e74}.
\end{enumerate}
\end{thm}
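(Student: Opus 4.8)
The plan is to argue exactly as in the proof of Theorem~\ref{t11} (the dihedral analogue), using the general relation between $\Gamma(G)$ and $\Gamma_N(G)$ furnished by Theorem~\ref{permutability nonabelian t1}. Recall from Theorem~\ref{permutability nonabelian t42} that $\Gamma_N(Q_n)\cong\Gamma_N(D_n)$, and from Theorem~\ref{t12} that $\Gamma(Q_n)\cong K_r+\Gamma_N(D_n)$, where $r$ is the number of proper normal subgroups of $Q_n$, namely $r=\tau(2n)+1$ if $n$ is even and $r=\tau(2n)-1$ if $n$ is odd. Thus each part of the theorem is just the instantiation of Theorem~\ref{t12} at a specific value of $n$, combined with the already-established description of $\Gamma_N(D_n)$ (that is, $\Gamma_N(Q_n)$) for that $n$ from Theorem~\ref{permutability nonabelian t36}. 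So the whole proof reduces to computing $\tau(2n)$ in each of the four cases and quoting the relevant part of Theorem~\ref{permutability nonabelian t36}, with one extra check for the degenerate case $\alpha=1$ in part~(i).

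First I would dispose of part~(i). For $n=2^{\alpha}$ with $\alpha\ge1$, we have $2n=2^{\alpha+1}$, so $\tau(2n)=\alpha+2$; since $n$ is even, $r=\tau(2n)+1=\alpha+3$, giving $\Gamma(Q_n)\cong K_{\alpha+3}+\Gamma_N(D_n)$ with $\Gamma_N(D_n)$ as in~\eqref{e71}, valid for $\alpha\ge2$. For $\alpha=1$, i.e.\ $Q_1$ of order $4$ — wait, the theorem requires $n>1$, so the smallest case is $n=2$, $Q_2$ of order $8$; here $Q_2$ is the quaternion group of order $8$, which is Hamiltonian (a Dedekind group), so $\Gamma_N(Q_2)$ is empty and every proper subgroup is normal. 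The proper subgroups of $Q_2$ are $Z(Q_2)$ and the three cyclic subgroups of order $4$, so $|L(Q_2)|-2=4$ and $\Gamma(Q_2)\cong K_4$; this is the $\alpha=1$ clause. Here one must note that the formula $r=|L(G)|-2$ holds (the second branch of Theorem~\ref{permutability nonabelian t1}), so $\Gamma(Q_2)\cong K_4$ directly.

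Next, part~(ii): for $n=p^{\alpha}$ with $p>2$, $2n=2p^{\alpha}$, so $\tau(2n)=2(\alpha+1)$; since $n$ is odd, $r=\tau(2n)-1=2\alpha+1$, and $\Gamma_N(D_n)$ is given by~\eqref{e72}. For part~(iii): $n=2p$, $p>2$, so $2n=4p=2^2p$, whence $\tau(2n)=3\cdot2=6$; $n$ is even, so $r=\tau(2n)+1=7$, and $\Gamma_N(D_n)$ is given by~\eqref{e73}. For part~(iv): $n=pq$ with $2<p<q$, so $2n=2pq$, $\tau(2n)=2\cdot2\cdot2=8$; $n$ is even, so $r=\tau(2n)+1=9$ — which disagrees with the stated $K_7$. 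Here I would pause: the discrepancy suggests either a typo in the statement or that I have miscounted; a careful recount of the normal subgroups of $Q_{pq}$ directly (cyclic $H_{0,r}$ with $r\mid 2n$, $r\ne1$, plus the two index-$2$ quaternion subgroups $H_{i,n/2}$) gives $\tau(2n)-1+2=\tau(2n)+1=9$ proper normal subgroups, so the join is with $K_9$; I would state the result with the correct constant and flag that the printed $K_7$ in~(iii)/(iv) should read according to $r=\tau(2n)+1$.

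The only genuine obstacle is this bookkeeping of constants — in particular getting $\tau(2n)$ right for even $n$, where the extra factor of $2$ in $2n$ (and whether $2\mid n$ already) changes the count — and handling the degenerate small case in part~(i) separately since the generic formula $\Gamma_N(D_{2^{\alpha}})\cong 2(\Gamma_N(D_{2^{\alpha-1}})+K_2)$ from~\eqref{e71} only starts at $\alpha\ge2$. Everything else is a direct appeal to Theorems~\ref{permutability nonabelian t1}, \ref{permutability nonabelian t42}, \ref{t12}, and~\ref{permutability nonabelian t36}.

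\begin{proof}
By Theorem~\ref{permutability nonabelian t42}, $\Gamma_N(Q_n)\cong\Gamma_N(D_n)$, and by Theorem~\ref{t12}, $\Gamma(Q_n)\cong K_r+\Gamma_N(D_n)$ where $r$ is the number of proper normal subgroups of $Q_n$; explicitly $r=\tau(2n)+1$ when $n$ is even and $r=\tau(2n)-1$ when $n$ is odd. We now evaluate $r$ in each case and invoke the corresponding part of Theorem~\ref{permutability nonabelian t36} for the structure of $\Gamma_N(D_n)$.

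\noindent(i): Let $n=2^{\alpha}$. If $\alpha=1$, then $Q_1$ has order $8$ and is a Dedekind group, so all its proper subgroups are normal; since $|L(Q_1)|-2=4$, Theorem~\ref{permutability nonabelian t1} gives $\Gamma(Q_1)\cong K_4$. If $\alpha\ge2$, then $2n=2^{\alpha+1}$, so $\tau(2n)=\alpha+2$; as $n$ is even, $r=\tau(2n)+1=\alpha+3$, and by Theorem~\ref{permutability nonabelian t36}(i) the graph $\Gamma_N(D_n)$ is as in~\eqref{e71}. Hence $\Gamma(Q_n)\cong K_{\alpha+3}+\Gamma_N(D_n)$.

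\noindent(ii): Let $n=p^{\alpha}$ with $p>2$. Then $2n=2p^{\alpha}$, so $\tau(2n)=2(\alpha+1)$; as $n$ is odd, $r=\tau(2n)-1=2\alpha+1$, and by Theorem~\ref{permutability nonabelian t36}(ii) the graph $\Gamma_N(D_n)$ is as in~\eqref{e72}. Hence $\Gamma(Q_n)\cong K_{2\alpha+1}+\Gamma_N(D_n)$.

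\noindent(iii): Let $n=2p$ with $p>2$. Then $2n=2^{2}p$, so $\tau(2n)=3\cdot2=6$; as $n$ is even, $r=\tau(2n)+1=7$, and by Theorem~\ref{permutability nonabelian t36}(iii) the graph $\Gamma_N(D_n)$ is as in~\eqref{e73}. Hence $\Gamma(Q_n)\cong K_7+\Gamma_N(D_n)$.

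\noindent(iv): Let $n=pq$ with $2<p<q$. Then $2n=2pq$, so $\tau(2n)=2\cdot2\cdot2=8$; as $n$ is even, $r=\tau(2n)+1=9$, and by Theorem~\ref{permutability nonabelian t36}(iv) the graph $\Gamma_N(D_n)$ is as in~\eqref{e74}. Hence $\Gamma(Q_n)\cong K_9+\Gamma_N(D_n)$.
\end{proof}
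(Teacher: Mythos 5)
Your overall strategy is exactly the paper's: combine Theorem~\ref{permutability nonabelian t42} and Theorem~\ref{t12} with the case-by-case description of $\Gamma_N(D_n)$ in Theorem~\ref{permutability nonabelian t36}, compute the number $r$ of proper normal subgroups of $Q_n$ in each case, and handle $n=2$ separately as a Dedekind group. Parts (i), (ii) and (iii) are carried out correctly.

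However, part (iv) contains a genuine error, and your proposed ``correction'' of the paper's constant is itself wrong. For $n=pq$ with $2<p<q$, both $p$ and $q$ are odd primes, so $n$ is \emph{odd}, not even. You must therefore use the odd branch of Theorem~\ref{t12}, namely $r=\tau(2n)-1=\tau(2pq)-1=8-1=7$, which recovers the paper's $K_7$. Your computation applied $r=\tau(2n)+1=9$ on the mistaken premise that $pq$ is even (perhaps by analogy with case (iii), where the factor $2$ really is present in $n$). Concretely: the proper normal subgroups of $Q_{pq}$ for odd $pq$ are exactly the cyclic subgroups $H_{0,r}$ with $r\mid 2pq$, $r\neq 1$, of which there are $\tau(2pq)-1=7$; there are no index-$2$ quaternion subgroups $H_{i,n/2}$ to add, since those only occur when $n$ is even. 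With this parity fixed, part (iv) reads $\Gamma(Q_{pq})\cong K_7+\Gamma_N(D_{pq})$ as stated, and the rest of your argument goes through unchanged.
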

\begin{proof}
 If $n=2$, then $Q_n$ is dedekind with 4 proper subgroups and so $\Gamma(Q_n) \cong K_4$.  The proofs of the remaining cases follows by Theorems~\ref{permutability nonabelian t42},  \ref{t12} and \ref{permutability nonabelian t36}.
\end{proof}

%

\begin{thm}\label{l8}
 Let $n>1$ be an integer. If $\tau(2n)+\sigma(n)<2(n+1)$, then $\Gamma(Q_n)$ is non-Hamiltonian.
\end{thm}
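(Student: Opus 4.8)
The plan is to copy the argument of Theorem~\ref{l1}, transported from $D_n$ to $Q_n$ by the facts established above: $\Gamma_N(Q_n)\cong\Gamma_N(D_n)$ (Theorem~\ref{permutability nonabelian t42}), $\Gamma(Q_n)\cong K_r+\Gamma_N(D_n)$ with $r$ the number of proper normal subgroups of $Q_n$ (Theorem~\ref{t12}), and $|V(\Gamma(Q_n))|=\tau(2n)+\sigma(n)-2$ by~\eqref{e61}. The goal is to exhibit a non-empty $S\subseteq V(\Gamma(Q_n))$ with $c(\Gamma(Q_n)-S)>|S|$ and then invoke Theorem~\ref{l3}.

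For the core case, $n$ odd, let $\mathcal A_1$ be the set of the $n$ cyclic subgroups $H_{i,1}$ of order $4$. By~\eqref{e8} with $r=s=1$ (equivalently, by pushing the independent set $\mathcal A_1^o$ of $\Gamma_N(D_n)$ through the isomorphism of Theorem~\ref{permutability nonabelian t42}), $\mathcal A_1$ is an independent set of $\Gamma(Q_n)$, disjoint from the clique block since its members are non-normal. Put $S=V(\Gamma(Q_n))-\mathcal A_1$. Then $\Gamma(Q_n)-S$ consists of $n$ isolated vertices, so $c(\Gamma(Q_n)-S)=n$, while $|S|=\tau(2n)+\sigma(n)-2-n$; the hypothesis $\tau(2n)+\sigma(n)<2(n+1)$ says precisely $|S|<n$, and Theorem~\ref{l3} yields the claim.

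For $n$ even the same $S$ is not available because $\mathcal A_1$ is no longer independent --- indeed $\alpha(\Gamma(Q_n))=\alpha(\Gamma_N(D_n))=n/2$ by Corollary~\ref{permutability nonabelian c1}(iii) and Theorem~\ref{t37}(i) --- so a single maximal independent set is too small to conclude directly. Here I would instead use the join structure: since $K_r$ is complete to everything, any witnessing $S$ must contain the clique block $V(K_r)$, and then $\Gamma(Q_n)-S=\Gamma_N(D_n)-S'$ with $|S|=r+|S'|$, so it suffices to find $S'$ with $c(\Gamma_N(D_n)-S')>r+|S'|$. This is where the structure of $\Gamma_N(D_n)$ for even $n$ must enter --- the partition into the classes $\mathcal A^e_r,\mathcal B^e_r,\mathcal C^e_r$, together with the explicit forms in Theorem~\ref{permutability nonabelian t36} (for instance $\Gamma_N(D_{2p})\cong pK_3$ already has $p$ components, and $\Gamma_N(D_{2^\alpha})\cong 2(\Gamma_N(D_{2^{\alpha-1}})+K_2)$ shows how components split). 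From these one reads off enough surviving components after deleting $V(K_r)$ (and possibly a small further set) to force $c(\Gamma(Q_n)-S)>|S|$, and a bookkeeping check with~\eqref{e61} confirms this is implied by $\tau(2n)+\sigma(n)<2(n+1)$.

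The main obstacle is exactly this even case: unlike for odd $n$, the inequality does not drop out of one independent set, and one must control the number of components of $\Gamma_N(D_n)$ surviving after removal of the clique block together with a carefully chosen remainder --- which genuinely uses the structure theory of $\Gamma_N(D_n)$ rather than a purely numerical estimate on $\tau$ and $\sigma$.
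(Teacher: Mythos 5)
Your odd-$n$ argument coincides with the paper's proof: the paper takes $S=V(\Gamma(Q_n))-\mathcal A_1$ and concludes from $|S|=\tau(2n)+\sigma(n)-(n+2)<n=|\mathcal A_1|=c(\Gamma(Q_n)-S)$ via Theorem~\ref{l3} --- and in fact the paper runs this single computation for all $n>1$ without any case split. The difficulty you raise for even $n$ is genuine: there each $H_{i,1}$ permutes with $H_{i+\frac{n}{2},1}$, so $\mathcal A_1$ induces a perfect matching rather than an independent set, $c(\Gamma(Q_n)-S)=\frac{n}{2}$ rather than $n$, and the hypothesis only yields $|S|<n$. The even case is not vacuous: for $n=22$ one has $\tau(44)+\sigma(22)=42<46$, yet $|S|=18>11=c(\Gamma(Q_n)-S)$, so this choice of $S$ certifies nothing (here one should instead delete the clique of the $7$ proper normal subgroups, which leaves the $11$ triangles of $\Gamma_N(D_{22})\cong 11K_3$). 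So you have correctly located a weakness that the paper's own two-line proof glosses over.

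However, your proposal does not close that gap. For even $n$ you only describe a plan --- delete the clique block together with ``a small further set'' and assert that ``a bookkeeping check \ldots confirms this is implied by $\tau(2n)+\sigma(n)<2(n+1)$'' --- without exhibiting a concrete $S'$ or proving that the number of surviving components exceeds $r+|S'|$ for an arbitrary even $n$ satisfying the hypothesis. That verification is the entire content of the even case and is not routine: it requires a lower bound on the number of components of $\Gamma_N(D_n)$ (or of the subgraph induced on a suitable union of the classes $\mathcal A^e_r$, $\mathcal B^e_r$, $\mathcal C^e_r$) in terms of $\tau$ and $\sigma$, and the explicit structural descriptions in Theorem~\ref{permutability nonabelian t36} cover only special values of $n$. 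As written, your argument establishes the theorem only for odd $n$; the even case remains unproven both in your proposal and, strictly speaking, in the paper's own proof.
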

\begin{proof}
Let $S=V(\Gamma(Q_n))-\mathcal A_1$. Since $\tau(2n)+\sigma(n)<2(n+1)$, we have $|S|=\tau(2n)+\sigma(n)-{n+2}<n=|\mathcal A_1|=c(\Gamma(Q_n)-S)$. So by
Theorem~\ref{l3}, $\Gamma(Q_n)$ is non-Hamiltonian.
\end{proof}

\begin{thm}\label{permutability nonabelian t26}\
\begin{enumerate}[{\normalfont (i)}]
 \item If $n=p^\alpha$, where $p$ is a prime, $\alpha\geq1$, then $\Gamma(Q_n)$ is Hamiltonian if and only if
either $p=2$ and $\alpha\geq1$ or $p=3$ and $\alpha=1$.
\item If $n=pq$, where $p$, $q$ are distinct primes, then $\Gamma(Q_n)$ is Hamiltonian if and only if either $p=2$ and $q\leq 7$ or $p=3$ and $q=5$.
\end{enumerate}
\end{thm}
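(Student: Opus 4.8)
The plan is to feed the explicit descriptions of $\Gamma(Q_n)$ recorded in Theorem~\ref{permutability nonabelian c9} into the Hamiltonicity tools already available: Lemma~\ref{l2} (together with Theorem~\ref{c1}) for the positive direction, and Theorems~\ref{l3} and~\ref{l8} for the negative direction. The only genuinely ad hoc point will be the case $n=15$.

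\emph{Part (i).} If $p=2$: for $\alpha=1$ we have $\Gamma(Q_2)\cong K_4$, which is Hamiltonian; for $\alpha\ge 2$, Theorem~\ref{permutability nonabelian c9}(i) gives $\Gamma(Q_n)\cong K_{\alpha+3}+\Gamma_N(D_n)$, where $\Gamma_N(D_n)$ is the disjoint union of two graphs, each of which contains a Hamiltonian path by Theorem~\ref{c1}; since $K_{\alpha+3}$ also has one, Lemma~\ref{l2} (with $m=2\le\alpha+3$) yields Hamiltonicity. If $p=3$ and $\alpha=1$, then $\Gamma(Q_3)\cong K_3+\Gamma_N(D_3)\cong K_3+\overline{K}_3$ by Theorem~\ref{permutability nonabelian c9}(ii) and Theorem~\ref{permutability nonabelian t36}(ii); writing $\overline{K}_3$ as a union of three copies of $K_1$ and applying Lemma~\ref{l2} with $m=3\le 3$ gives Hamiltonicity. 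It remains to rule out $p=3,\alpha\ge 2$ and $p\ge 5,\alpha\ge 1$: here one checks the elementary inequality $\tau(2n)+\sigma(n)<2(n+1)$. Using $\tau(2p^\alpha)=2(\alpha+1)$ and $\sigma(p^\alpha)<\tfrac{p}{p-1}p^\alpha$, this reduces to $3^\alpha>4\alpha-1$ when $p=3$ (true exactly for $\alpha\ge 2$, and false for $\alpha=1$, consistently) and to $3p^\alpha>8\alpha$ when $p\ge 5$ (true for all $\alpha\ge 1$); Theorem~\ref{l8} then gives non-Hamiltonicity.

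\emph{Part (ii).} Assume without loss of generality $p<q$. If $p=2$, Theorem~\ref{permutability nonabelian c9}(iii) gives $\Gamma(Q_{2q})\cong K_7+\bigcup_{r=1}^{q}K_3$; since $K_7$ and each $K_3$ have Hamiltonian paths, Lemma~\ref{l2} shows this graph is Hamiltonian if and only if $q\le 7$, i.e.\ $q\in\{3,5,7\}$. If $2<p<q$, Theorem~\ref{permutability nonabelian c9}(iv) gives $\Gamma(Q_n)\cong K_7+\Gamma_N(D_n)$ with $\Gamma_N(D_n)=\bigcup_{i,j}\mathcal{G}_{ij}$, in which the only neighbours of each $w_{ij}$ inside $\Gamma_N(D_n)$ are $u_i$ and $v_j$. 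Taking $S$ to consist of the seven vertices of $K_7$ together with all the $u_i$ and all the $v_j$, the graph $\Gamma(Q_n)-S$ is the independent set $\{w_{ij}\}$, so $c(\Gamma(Q_n)-S)=pq$ while $|S|=7+p+q$; by Theorem~\ref{l3}, $\Gamma(Q_n)$ is non-Hamiltonian whenever $pq>7+p+q$, equivalently $(p-1)(q-1)>8$, which among $2<p<q$ fails only for $(p,q)=(3,5)$. Finally, for $n=15$ one verifies Hamiltonicity of $\Gamma(Q_{15})\cong K_7+\Gamma_N(D_{15})$ by an explicit cycle: every Hamiltonian cycle must alternate between the $15$ vertices $w_{ij}$ and the $15$ vertices of $\{u_1,u_2,u_3\}\cup\{v_1,\dots,v_5\}\cup V(K_7)$, so one arranges the $w_{ij}$ in a $3\times 5$ pattern and links consecutive ones using each $u_i$ once (a within-row link), each $v_j$ once (a within-column link), and the seven universal vertices of $K_7$ for the remaining seven links.

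The main obstacle is exactly the case $n=15$. Lemma~\ref{l2} does not apply, because the triangles $\mathcal{G}_{ij}$ in $\Gamma_N(D_{15})$ overlap in the vertices $u_i,v_j$; moreover $\Gamma_N(D_{15})$ has no Hamiltonian path (its fifteen vertices $w_{ij}$ have degree $2$, which would force degree at least $5$ at some $u_i$), so Hamiltonicity of $\Gamma(Q_{15})$ cannot be obtained from a Hamiltonian path in $\Gamma_N(D_{15})$. One is therefore forced to build the Hamiltonian cycle by hand, exploiting the seven universal vertices of $K_7$ to absorb the defect; everything else reduces to the bookkeeping inequalities above.
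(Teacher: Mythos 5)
Your proposal is correct and follows essentially the same route as the paper: the decompositions of Theorem~\ref{permutability nonabelian c9} fed into Lemma~\ref{l2} (with Theorem~\ref{c1}) for the Hamiltonian cases, the cut-set criterion of Theorem~\ref{l3} for the non-Hamiltonian ones (your set $S$ in part (ii) is exactly the paper's $V(\Gamma(Q_n))-\mathcal A_1$, and your inequality $(p-1)(q-1)>8$ is the same deficiency bound), and an ad hoc Hamiltonian cycle for $n=15$. The only difference is that you describe the $n=15$ cycle by its combinatorial pattern (one row-link per $u_i$, one column-link per $v_j$, seven $K_7$-links) rather than writing it out, whereas the paper exhibits the cycle explicitly; the paper's cycle instantiates your pattern, so nothing is missing.
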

\begin{proof}
\noindent (i): Proof is divided into two cases.

\noindent\textbf{Case a.} $p=2$. By Theorem~\ref{permutability nonabelian c9}(i), $\Gamma(D_n)\cong H+G$, where $H\cong K_{\alpha+3}$
and $G\cong \Gamma_N(D_n)$. Now by Theorem~\ref{c1} and Lemma~\ref{l2}, $\Gamma(Q_n)$ is Hamiltonian.

\noindent \textbf{Case b.} $p>2$.

If $\alpha=1$, then by Theorem~\ref{permutability nonabelian c9}(ii), $\Gamma(Q_n)\cong H+\displaystyle \bigcup_{r=1}^p G_r$,
where $H\cong K_3$ and for each $r=1,2,\ldots,p$, $G_r\cong K_1$. So by Lemma~\ref{l2}, $\Gamma(Q_n)$ is Hamiltonian if and only if $p=3$.

If $\alpha\geq 2$, we take $S=V(\Gamma(Q_n))-\mathcal A_1$. Then $|S|=\tau(2n)+\sigma(n)-{n+2}<n=|\mathcal A_1|=c(\Gamma(Q_n)-S)$. So by Theorem~\ref{l3}, $\Gamma(Q_n)$ is non-Hamiltonian.


\noindent (ii): We deal with the following cases:

\noindent \textbf{Case a.} $p=2$. Then by Theorem~\ref{permutability nonabelian c9}(iii), $\Gamma(Q_n)\cong H+\displaystyle \bigcup_{r=1}^q G_r$,
where $H\cong K_7$ and for each $r=1,2,\ldots,q$, $G_r\cong K_3$. So by Lemma~\ref{l2},  $\Gamma(Q_n)$ is Hamiltonian if and only if $q\leq 7$.

\noindent \textbf{Case b.} $2 < p <q $. By Theorem~\ref{permutability nonabelian c9}(iv), $\Gamma(Q_n)\cong H+\Gamma_N(D_n)$,
 where $H\cong K_7$ and $\Gamma_N(D_n)\cong \displaystyle \bigcup_{i=1}^p\bigcup_{j=1}^q G_{ij}$, for each $i=1,2,\ldots,p$ and $j=1,2,\ldots,q$ $G_{ij}\cong K_3$ with vertex set $\{u_i,v_j,w_{ij}\}$.
Let the vertex set of $H$ be $\{N_i~|~i=1,2,\ldots,k\}$.

If $p=3$, $q=5$, then $\Gamma_N(D_{15})$ is shown in Figure 1. It is easy to see that,
$w_{11}-v_1-w_{21}-$ $u_2-w_{22}-v_2-$ $w_{12}-N_1-w_{23}-$ $v_3-w_{13}-N_2-$
$w_{24}-v_4-w_{14}-$ $N_3-w_{25}-v_5-$ $w_{35}-u_3-w_{32}-$ $N_4-w_{33}-N_5-$ $w_{34}-N_6-w_{31}-$ $N_7-w_{15}-u_1-w_{11}$ is a spanning cycle
in $\Gamma(Q_n)$.

If $p\geq 3$ and $q\geq 7$, we take $S=V(\Gamma(Q_n))-\mathcal A_1$. Then  $|S|=\tau(2n)+\sigma(n)-{n+2}<n=|\mathcal A_1|=c(\Gamma(Q_n)-S)$. So by Theorem~\ref{l3}, $\Gamma(Q_n)$ is non-Hamiltonian.
\end{proof}

\section{Quasi-dihedral groups}\label{sec:quasi}
For any positive integer $\alpha>3$, the quasi-dihedral group of order $2^\alpha$, is defined by
\begin{equation*}
QD_{2^\alpha}=\langle a,b~|~a^{2^{\alpha-1}}=b^2=1,
bab^{-1}=a^{2^{\alpha-2}-1}\rangle.
\end{equation*}
 The proper subgroups of $QD_{2^\alpha}$ are listed below:
\begin{itemize}
\item [(i)] cyclic groups $H_0^r=\langle a^{\frac{2^{\alpha-1}}{r}}\rangle$,  where $r$ is a divisor of $2^{\alpha-1}$, $r\neq 1$;
\item [(ii)]  the dihedral group $H_1^{2^{\alpha-2}}=\langle a^2$, $b\rangle\cong D_{2^{\alpha-2}}$ and the dihedral subgroups $H_i^r$ of $H_1^{2^{\alpha-2}}$,
where $r$ is a divisor of $2^{\alpha-2}$, $r\neq 2^{\alpha-2}$, $i=1,2,\ldots,\frac{2^{\alpha-2}}{r}$;
\item [(iii)]  the quaternion group $H_{2,2^{\alpha-3}}=\langle a^2,ab\rangle\cong Q_{2^{\alpha-3}}$ and the quaternion subgroups $H_{i,r}$
of $H_{2,2^{\alpha-3}}$, where $r$ is a divisor of $2^{\alpha-3}$, $r\neq 2^{\alpha-3}$, $i=1,2,\ldots,$ $\frac{2^{\alpha-3}}{r}$.
\end{itemize}
The only proper normal subgroups are listed in (i) together with $H_1^{2^{\alpha-2}}$, $H_{2,2^{\alpha-3}}$, $i=1,2$ of index 2.
Thus
\begin{equation}\label{e62}
 |L(QD_{2^{\alpha}})|=\alpha+3.2^{\alpha-2}-1,
\end{equation}
\noindent and so
\begin{equation*}
|V(\Gamma(QD_{2^\alpha}))|=\alpha+3(2^{\alpha-2}-1),
\end{equation*}
\begin{equation*}
 |V(\Gamma_N(QD_{2^\alpha}))|=3.2^{\alpha-3}-4.
 \end{equation*}

\subsection{Properties of $\Gamma_N(QD_{2^\alpha})$}
First we describe the structure of $\Gamma_N(QD_{2^\alpha})$.
\begin{thm}\label{permutability nonabelian t10}
Let $\alpha>3$ be an integer. Then
\begin{align}
 \Gamma_N(QD_{2^\alpha})\cong(K_2+\Gamma_N(D_{2^{\alpha-2}}))\cup(K_2+\Gamma_N(D_{2^{\alpha-3}})),
\end{align}\label{b1}
 where $\Gamma_N(D_{2^\alpha})$ is given by \eqref{e71}.
\end{thm}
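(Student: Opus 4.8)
The plan is to decompose $\Gamma_N(QD_{2^\alpha})$ along the three maximal subgroups of $QD_{2^\alpha}$, namely $\langle a\rangle$, $D:=H_1^{2^{\alpha-2}}\cong D_{2^{\alpha-2}}$ and $Q:=H_{2,2^{\alpha-3}}\cong Q_{2^{\alpha-3}}$ (these are the only index-$2$ subgroups, since $\Phi(QD_{2^\alpha})=\langle a^2\rangle$ and $QD_{2^\alpha}/\langle a^2\rangle\cong C_2\times C_2$), and every proper subgroup lies in one of them. All subgroups of $\langle a\rangle$ are characteristic in $\langle a\rangle\trianglelefteq QD_{2^\alpha}$, hence normal in $QD_{2^\alpha}$; so every non-normal subgroup of $QD_{2^\alpha}$ lies in $D$ or in $Q$, and in exactly one of them because $D\cap Q=\langle a^2\rangle\le\langle a\rangle$. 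Write $\mathcal{D}$, $\mathcal{Q}$ for the sets of non-normal subgroups of $QD_{2^\alpha}$ contained in $D$, $Q$ respectively, so that $V(\Gamma_N(QD_{2^\alpha}))=\mathcal{D}\sqcup\mathcal{Q}$.

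First I would pin down $\mathcal{D}$ and the subgraph induced on it. A subgroup of $D$ which is non-normal in $D$ is a fortiori non-normal in $QD_{2^\alpha}$, so $V(\Gamma_N(D_{2^{\alpha-2}}))\subseteq\mathcal{D}$. Among the remaining ($D$-normal) subgroups of $D$, the cyclic ones lie in $\langle a^2\rangle$ and $D$ itself has index $2$, so these stay normal in $QD_{2^\alpha}$; the only exceptions are the two index-$2$ dihedral subgroups $N_1:=H_1^{2^{\alpha-3}}$, $N_2:=H_2^{2^{\alpha-3}}$ of $D$, whose non-normality in $QD_{2^\alpha}$ is a one-line check (a direct conjugation by $a$, using the relation $bab^{-1}=a^{2^{\alpha-2}-1}$, shows $aN_ja^{-1}\ne N_j$). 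Hence $\mathcal{D}=V(\Gamma_N(D_{2^{\alpha-2}}))\cup\{N_1,N_2\}$. For the edges: two subgroups of $D$ permute in $QD_{2^\alpha}$ iff their product, being a subset of $D$, is a subgroup of $D$, i.e.\ iff they permute in $D$; so the induced subgraph on $V(\Gamma_N(D_{2^{\alpha-2}}))$ is $\Gamma_N(D_{2^{\alpha-2}})$, while $N_1,N_2\trianglelefteq D$ gives that $\{N_1,N_2\}$ induces $K_2$ and is completely joined to the rest of $\mathcal{D}$. Thus $\mathcal{D}$ induces $K_2+\Gamma_N(D_{2^{\alpha-2}})$. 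The identical argument with $Q\cong Q_{2^{\alpha-3}}$ in place of $D$ (the role of $N_1,N_2$ now played by the two index-$2$ quaternion subgroups $H_{1,2^{\alpha-4}},H_{2,2^{\alpha-4}}$), combined with $\Gamma_N(Q_{2^{\alpha-3}})\cong\Gamma_N(D_{2^{\alpha-3}})$ from Theorem~\ref{permutability nonabelian t42}, shows $\mathcal{Q}$ induces $K_2+\Gamma_N(D_{2^{\alpha-3}})$; for $\alpha=4$ one reads $\Gamma_N(D_2)=\Gamma_N(Q_2)$ as the null graph, since $D_2$ and $Q_2\cong Q_8$ are Dedekind.

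To finish I would show $\Gamma_N(QD_{2^\alpha})$ has no edge between $\mathcal{D}$ and $\mathcal{Q}$, whence it is the disjoint union of the two pieces just described. Suppose $H_1\in\mathcal{D}$, $H_2\in\mathcal{Q}$ permute. Being non-normal, $H_1\not\le\langle a^2\rangle$, so $H_1$ contains some $a^{2k}b\in D\setminus\langle a^2\rangle$, and likewise $H_2$ contains some $a^{2\ell+1}b\in Q\setminus\langle a^2\rangle$. Now $a^{2k}b$ lies in $D$ but in neither $\langle a\rangle$ nor $Q$, and $a^{2\ell+1}b$ lies in $Q$ but in neither $\langle a\rangle$ nor $D$, so the subgroup $H_1H_2$ is contained in no maximal subgroup, forcing $H_1H_2=QD_{2^\alpha}$ and $|H_1H_2|=2^\alpha$. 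On the other hand $H_i\cap\langle a^2\rangle$ has index $2$ in $H_i$ (it is nontrivial since $H_i\not\le\langle a^2\rangle$, and $H_i\langle a^2\rangle$ is proper, being contained in $D$ or $Q$, so $H_i\langle a^2\rangle/\langle a^2\rangle$ is an order-$2$ subgroup of $QD_{2^\alpha}/\langle a^2\rangle\cong C_2\times C_2$), and $H_1\cap H_2\le D\cap Q=\langle a^2\rangle$; since $H_1\cap\langle a^2\rangle$ and $H_2\cap\langle a^2\rangle$ are subgroups of the cyclic group $\langle a^2\rangle$ they are nested, so $H_1\cap H_2$ equals the smaller of them and $|H_1H_2|=|H_1|\,|H_2|/|H_1\cap H_2|=2\max\{|H_1|,|H_2|\}\le 2\cdot 2^{\alpha-2}=2^{\alpha-1}$, a contradiction.

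The step I expect to be the crux is this last one: getting $|H_1\cap H_2|$ exactly right (via the index-$2$ description of the cyclic part of each $H_i$ and the fact that subgroups of $\langle a^2\rangle$ are totally ordered) so that $2\max\{|H_1|,|H_2|\}$ undershoots $2^\alpha$. Verifying the non-normality of $N_1,N_2$ (and of $H_{1,2^{\alpha-4}},H_{2,2^{\alpha-4}}$) in $QD_{2^\alpha}$ and disposing of the base case $\alpha=4$ are routine.
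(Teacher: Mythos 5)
Your proof is correct and follows the same decomposition as the paper's: split the non-normal subgroups between the dihedral and quaternion maximal subgroups, identify the two index-$2$ subgroups of each that are normal there but not in $QD_{2^\alpha}$ as the $K_2$ factors, and invoke $\Gamma_N(Q_{2^{\alpha-3}})\cong\Gamma_N(D_{2^{\alpha-3}})$. The one place you go beyond the paper is the order-counting argument ($H_1H_2$ would have to be all of $QD_{2^\alpha}$, yet $|H_1H_2|=2\max\{|H_1|,|H_2|\}\le 2^{\alpha-1}<2^{\alpha}$) establishing that no vertex of the dihedral piece is adjacent to a vertex of the quaternion piece --- a claim the paper asserts without proof.
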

\begin{proof}
 The only non-normal subgroups of $QD_{2^\alpha}$ other than $\langle a\rangle$ are the dihedral subgroups of $H_1^{2^{\alpha-2}}$ and
quaternion subgroups of $H_{2, 2^{\alpha-3}}$. Here no non-normal subgroup of $H_1^{2^{\alpha-2}}$ permutes with any non-normal subgroup of
$H_{2, 2^{\alpha-3}}$.
So $\Gamma_N(QD_{2^\alpha})$ is the disjoint union of $\mathcal{G}_1\cup \mathcal{G}_2$, where $\mathcal{G}_1$ is the subgraph of $\Gamma_N(QD_{2^\alpha})$
induced by the dihedral
subgroups of $H_1^{2^{\alpha-2}}$; and
$\mathcal{G}_2$ is the subgraph of $\Gamma_N(QD_{2^\alpha})$ induced by the quaternion subgroups of $H_{2, 2^{\alpha-3}}$.
Here $H_1^{2^{\alpha-3}}$, $H_3^{2^{\alpha-3}}$ are normal subgroups of $H_1^{2^{\alpha-2}}$, but are not normal in
$QD_{2^\alpha}$. Also
 $H_{2,2^{\alpha-4}}$, $H_{4, 2^{\alpha-4}}$ are normal subgroups of $H_{2,2^{\alpha-3}}$, but are not normal in
$QD_{2^{\alpha}}$. In view of these, it is easy to see that
\begin{equation*}
 \mathcal{G}_1\cong K_2+\Gamma_N(H_1^{2^{\alpha-2}})\cong K_2+\Gamma_N(D_{2^{\alpha-2}}),
\end{equation*}
 \noindent and by Theorem~\ref{permutability nonabelian t42},
\begin{equation*}
\mathcal{ G}_2\cong K_2+\Gamma_N(H_{2,{2^{\alpha-3}}})\cong K_2+\Gamma_N(Q_{2^{\alpha-3}})\cong K_2+\Gamma_N(D_{2^{\alpha-3}}).
\end{equation*}
\noindent Hence the proof.
\end{proof}

Next, we give the degrees of the vertices of $\Gamma_N(QD_{2^\alpha})$.
\begin{thm}\label{permutability nonabelian t43}
Let $\alpha> 3$ be an integer.
\begin{enumerate}[{\normalfont (i)}]
\item
 $\deg_{\Gamma_N(QD_{2^\alpha})}(H_i^r)=x_i^r-2$, for every divisor $r$ of $2^{\alpha-2}$, $r\neq 2^{\alpha-2}$, $i=1,2,\ldots,\frac{2^{\alpha-2}}{r}$,
where $x_i^r$ is given by~\eqref{e50}.

\item
$\deg_{\Gamma_N(QD_{2^\alpha})}(H_{i,r})=x_i^r-2$, for every divisor $r$ of $2^{\alpha-3}$, $r\neq 2^{\alpha-3}$, $i=1,2,\ldots,\frac{2^{\alpha-3}}{r}$,
where $x_i^r$ is given by~\eqref{e50}.
\end{enumerate}
\end{thm}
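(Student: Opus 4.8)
The plan is to read this straight off the structural description of $\Gamma_N(QD_{2^\alpha})$ supplied by Theorem~\ref{permutability nonabelian t10}, combined with the degree formula for $\Gamma_N(D_{2^\beta})$ from Theorem~\ref{permutability nonabelian t40}(ii). Theorem~\ref{permutability nonabelian t10} presents $\Gamma_N(QD_{2^\alpha})$ as a disjoint union of two components: one isomorphic to $K_2+\Gamma_N(D_{2^{\alpha-2}})$, induced by the non-normal dihedral subgroups of $H_1^{2^{\alpha-2}}$, and one isomorphic to $K_2+\Gamma_N(Q_{2^{\alpha-3}})\cong K_2+\Gamma_N(D_{2^{\alpha-3}})$, induced by the non-normal quaternion subgroups of $H_{2,2^{\alpha-3}}$, the last isomorphism coming from Theorem~\ref{permutability nonabelian t42}. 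Since these are the connected components, the degree of a vertex in $\Gamma_N(QD_{2^\alpha})$ equals its degree inside its component, and inside a component $K_2+\Gamma_N(D_{2^\beta})$ the degree of a vertex $v$ is $\deg_{\Gamma_N(D_{2^\beta})}(v)+2$ when $v$ lies in the $\Gamma_N(D_{2^\beta})$-summand, and $1+|V(\Gamma_N(D_{2^\beta}))|$ when $v$ is one of the two vertices of the adjoined $K_2$.

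For part (i) I would fix a divisor $r$ of $2^{\alpha-2}$ with $r\neq 2^{\alpha-2}$ and split according to whether $H_i^r$ sits in the $\Gamma_N(D_{2^{\alpha-2}})$-summand or is a $K_2$-vertex. If $r\neq 2^{\alpha-3}$ then $H_i^r$ is a non-normal subgroup of $D_{2^{\alpha-2}}$, hence a vertex of $\Gamma_N(D_{2^{\alpha-2}})$, and Theorem~\ref{permutability nonabelian t40}(ii) gives $\deg_{\Gamma_N(D_{2^{\alpha-2}})}(H_i^r)=x_i^r-4$; adding the $2$ from the join yields $\deg_{\Gamma_N(QD_{2^\alpha})}(H_i^r)=x_i^r-2$. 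If $r=2^{\alpha-3}$ (so $i\in\{1,2\}$) then $H_i^r$ is one of the two index-$2$ subgroups of $D_{2^{\alpha-2}}$, which are exactly the two $K_2$-vertices of that component, so $\deg_{\Gamma_N(QD_{2^\alpha})}(H_i^r)=1+|V(\Gamma_N(D_{2^{\alpha-2}}))|$; using $|V(\Gamma_N(D_{2^{\alpha-2}}))|=\sigma(2^{\alpha-2})-3$ from \eqref{e110} and $x_i^{2^{\alpha-3}}=\sigma(2^{\alpha-2})$ from \eqref{e81}, this again equals $x_i^r-2$. A one-line arithmetic check confirms that in both cases $x_i^r$ is the number delivered by \eqref{e50} read with $n=2^{\alpha-2}$.

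Part (ii) is the same argument applied to the second component $K_2+\Gamma_N(D_{2^{\alpha-3}})$: under the isomorphism of Theorem~\ref{permutability nonabelian t42} the quaternion subgroup $H_{i,r}$ of $H_{2,2^{\alpha-3}}\cong Q_{2^{\alpha-3}}$ matches $H_i^r$ of $D_{2^{\alpha-3}}$, so the same dichotomy (vertex of the $\Gamma_N(D_{2^{\alpha-3}})$-summand when $r\neq 2^{\alpha-4}$, one of the two $K_2$-vertices when $r=2^{\alpha-4}$) with the identical computation gives $\deg_{\Gamma_N(QD_{2^\alpha})}(H_{i,r})=x_i^r-2$. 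I would also record the degenerate bottom case $\alpha=4$, where $Q_{2^{\alpha-3}}=Q_8$ is Dedekind, the $\Gamma_N(D_{2^{\alpha-3}})$-summand is empty, and the component is simply $K_2$; the formula still holds there for $r=1$, $i=1,2$.

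There is no serious obstacle; the proof is entirely bookkeeping, and the one point that must be handled carefully is deciding which proper subgroups of the maximal dihedral factor $D_{2^{\alpha-2}}$ (resp.\ the maximal quaternion factor $Q_{2^{\alpha-3}}$) actually count as vertices of $\Gamma_N$ of the \emph{ambient} group $QD_{2^\alpha}$. The load-bearing observation is that the two index-$2$ subgroups of each such factor are normal inside that factor but not in $QD_{2^\alpha}$, so they re-enter as the two vertices of the adjoined $K_2$; this is exactly what shifts the ``$-4$'' of Theorem~\ref{permutability nonabelian t40}(ii) to the ``$-2$'' claimed here. A secondary care point is the reading of \eqref{e50}: its parameter must be taken to be the exponent of the relevant power-of-two dihedral group, i.e.\ $\alpha-2$ in part (i) and $\alpha-3$ in part (ii).
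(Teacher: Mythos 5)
Your proposal is correct and follows essentially the same route as the paper: it reads the degrees off the decomposition $\Gamma_N(QD_{2^\alpha})\cong(K_2+\Gamma_N(D_{2^{\alpha-2}}))\cup(K_2+\Gamma_N(D_{2^{\alpha-3}}))$ of Theorem~\ref{permutability nonabelian t10} together with Theorem~\ref{permutability nonabelian t40}(ii), treating the two index-$2$ subgroups (the adjoined $K_2$-vertices) separately, exactly as the paper does. Your computation of the $K_2$-vertex degree as $1+|V(\Gamma_N(D_{2^{\alpha-2}}))|$ via \eqref{e110} and \eqref{e81} is just a rephrasing of the paper's direct count $x_i^r-|\{H_i^r,H_1^{2^{\alpha-2}}\}|$, and your remark on the degenerate case $\alpha=4$ is a harmless extra.
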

\begin{proof}
For every divisor $r$ of $2^{\alpha-2}$,  $r\neq 2^{\alpha-2}$, $2^{\alpha-3}$, $i=1,2,\ldots, \frac{2^{\alpha-2}}{r}$, by Theorems~\ref{permutability nonabelian t10} and \ref{permutability nonabelian t40}, we have
$\deg_{\Gamma_N(QD_{2^\alpha})}(H_i^r)=\deg_{\Gamma_N(D_{2^{\alpha-2}})}(H_i^r)+2$
$=x_i^r-4+2, =x_i^r-2$, where $x_i^r$ is  given in \eqref{e50}.

For $r=2^{\alpha-3}$, $i=1,2$, we have
$\deg_{\Gamma_N(QD_{2^\alpha})}(H_i^r)=x_i^r-|\{H_i^r, H_1^{2^{\alpha-2}}\}|
= x_i^r-2$, where $x_i^r$ is given in \eqref{e50}.

We can use the similar argument to prove the part (ii) of this result, in view of Theorem~\ref{permutability nonabelian t42}.
\end{proof}

\begin{cor}\label{permutability nonabelian c38}Let $\alpha>3$ be an integer. Then
\begin{equation*}
|E(\Gamma_N(QD_{2^\alpha}))|=2^{\alpha-3}(12\alpha-49)+14.
\end{equation*}
\end{cor}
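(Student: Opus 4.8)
The plan is to build on the structural decomposition already established in Theorem~\ref{permutability nonabelian t10}, which exhibits $\Gamma_N(QD_{2^\alpha})$ as a disjoint union of two join graphs, $K_2+\Gamma_N(D_{2^{\alpha-2}})$ and $K_2+\Gamma_N(D_{2^{\alpha-3}})$. Since the edge set of a disjoint union is the disjoint union of the edge sets, it suffices to count the edges of each piece and add. For any graphs one has $|E(G_1+G_2)|=|E(G_1)|+|E(G_2)|+|V(G_1)|\,|V(G_2)|$; taking $G_1=K_2$ (so $|E(K_2)|=1$, $|V(K_2)|=2$) gives, for $m=\alpha-2$ and $m=\alpha-3$,
\begin{equation*}
|E(K_2+\Gamma_N(D_{2^m}))|=1+|E(\Gamma_N(D_{2^m}))|+2\,|V(\Gamma_N(D_{2^m}))|.
\end{equation*}

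Next I would substitute the known data for the dihedral factors. By Corollary~\ref{permutability nonabelian c33}(ii), $|E(\Gamma_N(D_{2^m}))|=2^m(4m-11)+14$, and by \eqref{e110} (the even case), $|V(\Gamma_N(D_{2^m}))|=\sigma(2^m)-3=2^{m+1}-4$. Plugging $m=\alpha-2$ and collecting powers of $2$ via $2^{\alpha}=4\cdot 2^{\alpha-2}$ gives a contribution $2^{\alpha-2}(4\alpha-15)+7$; plugging $m=\alpha-3$ similarly gives $2^{\alpha-3}(4\alpha-19)+7$. Adding the two and using $2^{\alpha-2}=2\cdot 2^{\alpha-3}$ collapses the sum to
\begin{equation*}
2^{\alpha-3}\bigl[2(4\alpha-15)+(4\alpha-19)\bigr]+14=2^{\alpha-3}(12\alpha-49)+14,
\end{equation*}
which is the asserted formula.

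An alternative route would be to sum the vertex degrees from Theorem~\ref{permutability nonabelian t43} via $|E|=\tfrac12\sum_v\deg(v)$, using the identity \eqref{e51} for $\sum_{r\mid 2^m}\sum_i x_i^r$ together with the relevant vertex counts; this needs a little extra care to peel off the terms attached to the index-two divisors, so I would favour the disjoint-union computation above. The one point worth a remark is the boundary value $\alpha=4$, where $\Gamma_N(D_{2^{\alpha-3}})=\Gamma_N(D_2)$ is the empty graph: one checks that both $2^m(4m-11)+14$ and $2^{m+1}-4$ vanish at $m=1$, so no separate case analysis is required and the formula is valid for all $\alpha>3$. The only "obstacle" here is the bookkeeping in the algebraic simplification, which is entirely routine.
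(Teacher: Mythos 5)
Your proof is correct and follows essentially the same route as the paper: decompose $\Gamma_N(QD_{2^\alpha})$ via Theorem~\ref{permutability nonabelian t10}, apply the join edge-count formula to each component, and substitute the values from Corollary~\ref{permutability nonabelian c33}(ii) and \eqref{e110}. Your explicit check that the formulas degenerate correctly at $\alpha=4$ (where $\Gamma_N(D_{2^{\alpha-3}})$ is empty) is a small point of extra care that the paper omits, and the algebra leading to $2^{\alpha-3}(12\alpha-49)+14$ checks out.
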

\begin{proof}
By Theorem~\ref{permutability nonabelian t10},
\begin{align}\label{a1}
 |E(\Gamma_N(QD_{2^\alpha}))|&=|E(K_2+\Gamma_N(D_{2^{\alpha-2}}))|+|E(K_2+\Gamma_N(D_{2^{\alpha-3}}))|\nonumber \\
&=2|V(\Gamma_N(D_{2^{\alpha-2}}))|+1+|E(\Gamma_N(D_{2^{\alpha-2}}))|\nonumber\\
&\quad{}+2|V(\Gamma_N(D_{2^{\alpha-3}}))|+1+|E(\Gamma_N(D_{2^{\alpha-3}}))|.
\end{align}
Now by applying Corollary~\ref{permutability nonabelian c33} and \eqref{e110}, in \eqref{a1}, the result follows.
\end{proof}

\begin{cor}\label{permutability nonabelian c30}Let $\alpha>3$ be an integer.

\begin{enumerate}[{\normalfont (i)}]
 \item $\alpha(\Gamma_N(QD_{2^\alpha}))=3.2^{\alpha-4}$.
\item $\omega(\Gamma_N(QD_{2^\alpha}))=2(\alpha-2)$.
\item $\chi(\Gamma_N(QD_{2^\alpha}))=2(\alpha-2)$.
\item $\Gamma_N(QD_{2^\alpha})$ is weakly perfect.
\item $\gamma(\Gamma_N(QD_{2^\alpha}))= 2$
\item $\Gamma_N(QD_{2^\alpha})$ is non-Eulerian.
\item $\Gamma_N(QD_{2^\alpha})$ is  non-Hamiltonian.
\end{enumerate}
\end{cor}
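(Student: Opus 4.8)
The plan is to exploit the decomposition $\Gamma_N(QD_{2^\alpha})\cong(K_2+\Gamma_N(D_{2^{\alpha-2}}))\cup(K_2+\Gamma_N(D_{2^{\alpha-3}}))$ from Theorem~\ref{permutability nonabelian t10}, so that every claimed invariant reduces to the corresponding invariant of the two summands $K_2+\Gamma_N(D_{2^{\alpha-2}})$ and $K_2+\Gamma_N(D_{2^{\alpha-3}})$, whose structure is in turn controlled by \eqref{e71} and by Theorems~\ref{t37} and~\ref{permutability nonabelian t36}(i). For the independence number (i), I would note that $\alpha(G_1\cup G_2)=\alpha(G_1)+\alpha(G_2)$ for a disjoint union, that $\alpha(K_2+H)=\alpha(H)$ since the two $K_2$-vertices are adjacent to everything, and then invoke Theorem~\ref{t37}(i) to get $\alpha(\Gamma_N(D_{2^{\alpha-2}}))=2^{\alpha-3}$ and $\alpha(\Gamma_N(D_{2^{\alpha-3}}))=2^{\alpha-4}$; the sum is $2^{\alpha-3}+2^{\alpha-4}=3\cdot 2^{\alpha-4}$. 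For the clique and chromatic numbers (ii)--(iv), the disjoint union takes the maximum, the join $K_2+H$ adds $2$ to both $\omega$ and $\chi$, and Theorem~\ref{t37}(iii) gives $\omega=\chi=2(\alpha-2)-3$ for $\Gamma_N(D_{2^{\alpha-2}})$ and $2(\alpha-3)-3$ for $\Gamma_N(D_{2^{\alpha-3}})$ (using the formula $(2\beta+1)\tau(1)-3$ with $n'=1$); adding $2$ and taking the larger yields $2(\alpha-2)-1+2=2(\alpha-2)$, wait—more carefully, $2(\alpha-2)-3+2 = 2\alpha-5$ and $2(\alpha-3)-3+2=2\alpha-7$, and the maximum $2\alpha-5$ does not match; so instead I would recount directly from \eqref{e71}: $K_2+\Gamma_N(D_{2^{\alpha-2}})$ has clique number $2+\omega(\Gamma_N(D_{2^{\alpha-2}}))$, and since $\Gamma_N(D_{2^{\alpha-2}})$ itself splits as $2(\Gamma_N(D_{2^{\alpha-3}})+K_2)$ its clique number is $2+\omega(\Gamma_N(D_{2^{\alpha-3}}))$, giving by induction $\omega(\Gamma_N(D_{2^m}))=2(m-2)$, hence $\omega(K_2+\Gamma_N(D_{2^{\alpha-2}}))=2(\alpha-2)$, which dominates the other summand's $2(\alpha-3)$. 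Weak perfectness (iv) then follows because $\omega=\chi$ holds on each summand (Theorem~\ref{t37}(iii) plus the fact that joining $K_2$ preserves the equality) and both union and join preserve $\omega=\chi$.

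For the domination number (v), I would use that $\gamma(G_1\cup G_2)=\gamma(G_1)+\gamma(G_2)$ and that in $K_2+H$ a single vertex of the $K_2$ dominates everything, so $\gamma(K_2+H)=1$; hence $\gamma(\Gamma_N(QD_{2^\alpha}))=1+1=2$. For non-Eulerianness (vi), I would invoke Theorem~\ref{permutability nonabelian t43}: the degrees are $x_i^r-2$ with $x_i^r$ given by \eqref{e50}, and \eqref{e50} gives $x_i^r=2^{u+2}-2u+2\beta-3$ for the relevant $\beta$, which is odd; subtracting $2$ keeps it odd, so some vertex has odd degree and $\Gamma_N(QD_{2^\alpha})$ is not Eulerian. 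For non-Hamiltonicity (vii), the graph is disconnected (it is a disjoint union of two nonempty graphs, as $\alpha>3$ forces both summands to be nonempty), and a disconnected graph on more than one vertex cannot be Hamiltonian.

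The only genuinely delicate point is getting the arithmetic right in parts (i)--(iii): one must be careful that $\Gamma_N(D_{2^{\alpha-3}})$ is a legitimate vertex set only when $\alpha-3\ge 2$, i.e. $\alpha\ge 5$, and handle $\alpha=4$ separately using $\Gamma_N(D_{2^2})\cong 2K_2$ and $\Gamma_N(D_2)$ (which is empty), so that the base case of the induction for $\omega$ and $\alpha$ is pinned down correctly; after that the rest is a routine application of the union/join identities for the four parameters $\alpha(\cdot)$, $\omega(\cdot)$, $\chi(\cdot)$, $\gamma(\cdot)$ together with the already-established formulas for the dihedral pieces.
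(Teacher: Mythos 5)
Your overall strategy is exactly the paper's: reduce everything to the decomposition of Theorem~\ref{permutability nonabelian t10} via the standard union/join identities and the dihedral formulas of Theorem~\ref{t37}. Parts (i), (v), (vi) and (vii) are fine as you have them (your degree argument for (vi) is a legitimate alternative to the paper's appeal to disconnectedness, and your explicit $\gamma(K_2+H)=1$ observation is more informative than the paper's one-line proof of (v)).

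The one genuine problem is the arithmetic in (ii)--(iii), where your writeup as it stands does not actually produce $2(\alpha-2)$. Evaluating Theorem~\ref{t37}(iii) at $n=2^{\beta}$ (so $n'=1$, $\tau(n')=1$) gives $\omega(\Gamma_N(D_{2^{\beta}}))=(2\beta+1)-3=2\beta-2=2(\beta-1)$, not the $2\beta-3$ you first wrote nor the $2(\beta-2)$ your subsequent induction asserts; note that your induction already fails at its base case, since $\Gamma_N(D_{2^{2}})\cong 2K_2$ has clique number $2=2(2-1)$, not $0$. With the correct value, the ``mismatch'' that sent you off on the recount disappears: $\omega(K_2+\Gamma_N(D_{2^{\alpha-2}}))=2+2(\alpha-3)=2(\alpha-2)$ and $\omega(K_2+\Gamma_N(D_{2^{\alpha-3}}))=2+2(\alpha-4)=2(\alpha-3)$, whose maximum is $2(\alpha-2)$ as claimed, and the same computation with $\chi$ in place of $\omega$ gives (iii) and hence (iv). So the final answers you assert are right, but the chain of equalities you offer to justify them is internally inconsistent and would mislead a reader; replace both the miscopied formula and the ad hoc induction by the single correct evaluation above.
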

\begin{proof}
{(i)}: Using Theorem~\ref{permutability nonabelian t10},
\begin{align*}
\alpha(\Gamma_N(QD_{2^\alpha}))&=\alpha(K_2+\Gamma_N(D_{2^{\alpha-2}}))+\alpha(K_2+\Gamma_N(D_{2^{\alpha-3}})) \\
&=\alpha(\Gamma_N(D_{2^{\alpha-2}}))+\alpha(\Gamma_N(D_{2^{\alpha-3}})).
\end{align*}
So the proof follows by Theorem~\ref{t37}(i).

\noindent {(ii)}: Using Theorem~\ref{permutability nonabelian t10},
\begin{align*}
\omega(\Gamma_N(QD_{2^\alpha}))&=\mbox{max}\{\omega(K_2+\Gamma_N(D_{2^{\alpha-2}})), \omega(K_2+\Gamma_N(D_{2^{\alpha-3}}))\} \\
&=\mbox{max}\{2+\omega(\Gamma_N(D_{2^{\alpha-2}})), 2+\omega(\Gamma_N(D_{2^{\alpha-3}}))\}.
\end{align*}
So the proof follows by Theorem~\ref{t37}(ii).

\noindent {(iii)}: Using Theorem~\ref{permutability nonabelian t10},
\begin{align*}
\chi(\Gamma_N(QD_{2^\alpha}))&=\mbox{max}~\{\chi(K_2+\Gamma_N(D_{2^{\alpha-2}})), \chi(K_2+\Gamma_N(D_{2^{\alpha-3}}))\} \\
&=\mbox{max}~\{2+\chi(\Gamma_N(D_{2^{\alpha-2}})), 2+\chi(\Gamma_N(D_{2^{\alpha-3}}))\}
\end{align*}
So the proof follows by Theorem~\ref{t37}(iii).

\noindent {(iv)}: Follows from (ii) and (iii).

\noindent {(v)}: Follows from Theorem~\ref{permutability nonabelian t10}.

\noindent {(vi)-(vii)}: Since by Theorem~\ref{permutability nonabelian t10}, $\Gamma_N(QD_{2^\alpha})$ is disconnected, so the proof follows.
\end{proof}

\subsection{Properties of $\Gamma(QD_{2^\alpha})$}
In the following result we describe the structure of $\Gamma(QD_{2^\alpha})$
\begin{thm}\label{permutability nonabelian c10}Let $\alpha>3$ be an integer. Then
$\Gamma(QD_{2^\alpha})\cong K_{\alpha+1}+\Gamma_N(QD_{2^\alpha})$, where $\Gamma_N(QD_{2^\alpha})$ is given by~\eqref{b1}
\end{thm}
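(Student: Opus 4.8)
The plan is to derive this directly from the general structure theorem, Theorem~\ref{permutability nonabelian t1}, so that the whole statement reduces to counting the proper normal subgroups of $QD_{2^\alpha}$. First I would observe that $QD_{2^\alpha}$ is a finite non-simple group (it is a non-trivial $2$-group, so in particular non-simple and non-Dedekind for $\alpha>3$), hence both $\Gamma(QD_{2^\alpha})$ and $\Gamma_N(QD_{2^\alpha})$ are defined and Theorem~\ref{permutability nonabelian t1} applies once the number $r$ of proper normal subgroups is known.

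Second, I would read off $r$ from the list of subgroups of $QD_{2^\alpha}$ recorded at the beginning of this section: the proper normal subgroups are precisely the cyclic subgroups $H_0^r$ with $r\mid 2^{\alpha-1}$, $r\neq 1$ (there are $\alpha-1$ of these), together with the two index-$2$ subgroups $H_1^{2^{\alpha-2}}\cong D_{2^{\alpha-2}}$ and $H_{2,2^{\alpha-3}}\cong Q_{2^{\alpha-3}}$. Hence $r=(\alpha-1)+2=\alpha+1$.

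Third, I would verify that the exceptional case of Theorem~\ref{permutability nonabelian t1} does not occur, i.e.\ $r\neq |L(QD_{2^\alpha})|-2$. By \eqref{e62}, $|L(QD_{2^\alpha})|-2=\alpha+3\cdot 2^{\alpha-2}-3$, and the equality $\alpha+1=\alpha+3\cdot 2^{\alpha-2}-3$ would force $3\cdot 2^{\alpha-2}=4$, which is impossible for an integer $\alpha>3$ (indeed $3\cdot 2^{\alpha-2}\geq 12$). Therefore Theorem~\ref{permutability nonabelian t1} yields $\Gamma(QD_{2^\alpha})\cong K_{\alpha+1}+\Gamma_N(QD_{2^\alpha})$, and substituting the description of $\Gamma_N(QD_{2^\alpha})$ from Theorem~\ref{permutability nonabelian t10} (equation \eqref{b1}) finishes the proof.

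The only step needing any care — and essentially the only possible source of error — is the count of proper normal subgroups: one must be sure that no subgroup of the dihedral part $H_1^{2^{\alpha-2}}$ or the quaternion part $H_{2,2^{\alpha-3}}$, apart from those two index-$2$ subgroups, is normal in the whole group $QD_{2^\alpha}$, and that $H_0^1$ is excluded. Since this is already established in the structural preamble to this section, there is no genuine obstacle; the argument is a one-line deduction from Theorems~\ref{permutability nonabelian t1} and~\ref{permutability nonabelian t10}.
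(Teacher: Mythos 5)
Your proposal is correct and follows essentially the same route as the paper: count the proper normal subgroups of $QD_{2^\alpha}$ (the $\alpha-1$ nontrivial cyclic subgroups of $\langle a\rangle$ plus the two index-$2$ subgroups, giving $r=\alpha+1$) and apply Theorem~\ref{permutability nonabelian t1} together with Theorem~\ref{permutability nonabelian t10}. Your explicit check that $r\neq |L(QD_{2^\alpha})|-2$ is a detail the paper leaves implicit, but otherwise the arguments coincide.
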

\begin{proof}
Since the number of proper normal subgroups of $QD_{2^\alpha}$ is $\alpha+1$. So by Theorems~\ref{permutability nonabelian c1} and
~\ref{permutability nonabelian t10},
the proof follows.
\end{proof}

\begin{cor}\label{permutability nonabelian c26}
Let $\alpha>3$ be an integer.
\begin{enumerate}[{\normalfont (1)}]
\item $\deg_{\Gamma(QD_{2^\alpha})}(H_0^r)=\alpha+3.2^{\alpha-2}-4$, for every divisor $r$ of $2^{\alpha-1}$, $r\neq 1$.

\item $\deg_{\Gamma(QD_{2^\alpha})}(H_1^{2^{\alpha-2}})=\alpha+3.2^{\alpha-2}-4=\deg_{\Gamma(QD_{2^{\alpha}})}(H_2^{2^{\alpha-3}})$.

\item $\deg_{\Gamma(QD_{2^\alpha})}(H_i^r)=\alpha+x_i^r-1$, for every divisor $r$ of $2^{\alpha-2}$, $ r\neq 2^{\alpha-2}$, $i=1,2,\ldots,\frac{2^{\alpha-2}}{r}$,  where $x_i^r$ is
given by~\eqref{e50}.

\item   $\deg_{\Gamma(QD_{2^\alpha})}(H_{i,r})=\alpha+x_i^r-1$, for every divisor $r$ of $2^{\alpha-3}$, $r\neq 2^{\alpha-3}$, $i=1,2,\ldots,\frac{2^{\alpha-3}}{r}$, where $x_i^r$ is
given by~\eqref{e50}.
\end{enumerate}
\end{cor}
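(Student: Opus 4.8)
The plan is to read off all four degree formulas directly from the structure theorem $\Gamma(QD_{2^\alpha})\cong K_{\alpha+1}+\Gamma_N(QD_{2^\alpha})$ of Theorem~\ref{permutability nonabelian c10}, fed into the general degree formula of Corollary~\ref{permutability nonabelian c1}(i). First I would record the two numerical inputs: $QD_{2^\alpha}$ has exactly $\alpha+1$ proper normal subgroups, and by~\eqref{e62} one has $|L(QD_{2^\alpha})|=\alpha+3\cdot 2^{\alpha-2}-1$. One should note in passing that $\alpha+1\neq |L(QD_{2^\alpha})|-2$ for $\alpha>3$ (equivalently $\Gamma_N(QD_{2^\alpha})$ is non-empty, which is clear from Theorem~\ref{permutability nonabelian t10}), so that Corollary~\ref{permutability nonabelian c1}(i) is applicable.

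Next I would handle the normal vertices, namely the subgroups $H_0^r$ with $r\mid 2^{\alpha-1}$, $r\neq 1$, together with the two index-$2$ subgroups $H_1^{2^{\alpha-2}}$ and $H_{2,2^{\alpha-3}}$. For each such $H$, Corollary~\ref{permutability nonabelian c1}(i) gives $\deg_{\Gamma(QD_{2^\alpha})}(H)=|L(QD_{2^\alpha})|-3$, and substituting~\eqref{e62} yields $\alpha+3\cdot 2^{\alpha-2}-4$; this is precisely parts (1) and (2).

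For a non-normal vertex $H$, Corollary~\ref{permutability nonabelian c1}(i) gives $\deg_{\Gamma(QD_{2^\alpha})}(H)=(\alpha+1)+\deg_{\Gamma_N(QD_{2^\alpha})}(H)$. I would then insert the $\Gamma_N$-degrees already computed in Theorem~\ref{permutability nonabelian t43}: $\deg_{\Gamma_N(QD_{2^\alpha})}(H_i^r)=x_i^r-2$ for the non-normal dihedral subgroups of $H_1^{2^{\alpha-2}}$, and likewise $\deg_{\Gamma_N(QD_{2^\alpha})}(H_{i,r})=x_i^r-2$ for the non-normal quaternion subgroups of $H_{2,2^{\alpha-3}}$, with $x_i^r$ given by~\eqref{e50}. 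In both cases this produces $(\alpha+1)+(x_i^r-2)=\alpha+x_i^r-1$, which is parts (3) and (4).

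Since the list in the definition of $QD_{2^\alpha}$ shows that every proper subgroup is either one of the normal subgroups above or one of these non-normal dihedral/quaternion subgroups, the cases exhaust $V(\Gamma(QD_{2^\alpha}))$ and the proof is complete. There is no real analytic obstacle here; the only thing to be careful about is the bookkeeping — correctly separating the normal $H_0^r$'s and the two index-$2$ normal subgroups from the non-normal vertices — so that the two regimes ($|L|-3$ versus $(\alpha+1)+\deg_{\Gamma_N}$) are applied to the right subgroups.
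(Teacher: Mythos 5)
Your proposal is correct and follows essentially the same route as the paper: it combines the general degree formula of Corollary~\ref{permutability nonabelian c1}(i) with the $\Gamma_N$-degrees from Theorem~\ref{permutability nonabelian t43}, the count $|L(QD_{2^\alpha})|=\alpha+3\cdot 2^{\alpha-2}-1$ from~\eqref{e62}, the fact that there are $\alpha+1$ proper normal subgroups, and the normality of $H_1^{2^{\alpha-2}}$ and $H_{2,2^{\alpha-3}}$. The arithmetic in both regimes checks out, so nothing further is needed.
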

\begin{proof}
Follows by Corollary~\ref{permutability nonabelian c1}, Theorem~\ref{permutability nonabelian t43} and \eqref{e62} and from the fact that  $H_1^{2^{\alpha-2}}$ and
$H_2^{2^{\alpha-3}}$ are normal in $QD_{2^\alpha}$.
\end{proof}

\begin{cor}\label{permutability nonabelian c37}Let $\alpha>3$ be an integer. Then
\begin{equation*}
|E(\Gamma(QD_{2^\alpha}))|=\displaystyle \frac{1}{2}\big\{2^{\alpha-2}(18\alpha-43)+\alpha^2-7\alpha+20 \big\}.
\end{equation*}
\end{cor}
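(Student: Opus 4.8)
The plan is to reduce everything to results already established. By Theorem~\ref{permutability nonabelian c10} we have $\Gamma(QD_{2^\alpha})\cong K_{\alpha+1}+\Gamma_N(QD_{2^\alpha})$, and in the proof of that theorem it is noted that $QD_{2^\alpha}$ has exactly $r=\alpha+1$ proper normal subgroups. Since for $\alpha>3$ one has $3\cdot2^{\alpha-2}-4>0$, i.e.\ $r=\alpha+1<\alpha+3\cdot2^{\alpha-2}-3=|L(QD_{2^\alpha})|-2$ (using \eqref{e62}), the hypothesis of Corollary~\ref{permutability nonabelian c1} is met, so part~(ii) of that corollary gives
\[
|E(\Gamma(QD_{2^\alpha}))|=|E(\Gamma_N(QD_{2^\alpha}))|+\frac{\alpha+1}{2}\bigl(2|L(QD_{2^\alpha})|-(\alpha+1)-5\bigr).
\]

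First I would substitute $|L(QD_{2^\alpha})|=\alpha+3\cdot2^{\alpha-2}-1$ from \eqref{e62}, so that $2|L(QD_{2^\alpha})|-(\alpha+1)-5=\alpha+3\cdot2^{\alpha-1}-8$, and then substitute $|E(\Gamma_N(QD_{2^\alpha}))|=2^{\alpha-3}(12\alpha-49)+14$ from Corollary~\ref{permutability nonabelian c38}. Expanding $\frac{\alpha+1}{2}(\alpha+3\cdot2^{\alpha-1}-8)$ and collecting like terms, the contributions carrying a power of $2$ add up to $2^{\alpha-3}(12\alpha-49)+2^{\alpha-3}(6\alpha+6)=2^{\alpha-3}(18\alpha-43)=\tfrac12\,2^{\alpha-2}(18\alpha-43)$, while the remaining terms add up to $14+\tfrac12(\alpha^2+\alpha)-4\alpha-4=\tfrac12(\alpha^2-7\alpha+20)$. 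Summing these gives the asserted value.

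A second, equivalent route bypasses Corollary~\ref{permutability nonabelian c1}: from Theorem~\ref{permutability nonabelian c10} and the join identity $|E(G_1+G_2)|=|E(G_1)|+|E(G_2)|+|V(G_1)|\,|V(G_2)|$ one needs only $|E(K_{\alpha+1})|=\binom{\alpha+1}{2}$, Corollary~\ref{permutability nonabelian c38}, and $|V(\Gamma_N(QD_{2^\alpha}))|=3\cdot2^{\alpha-2}-4$, the last obtained from the decomposition in Theorem~\ref{permutability nonabelian t10} together with \eqref{e110} as $2+(2^{\alpha-1}-4)+2+(2^{\alpha-2}-4)$. Either way the argument is short; the only thing requiring any care — and the closest thing to an obstacle — is verifying that the applicability condition $r\neq|L(QD_{2^\alpha})|-2$ of Corollary~\ref{permutability nonabelian c1} holds for every $\alpha>3$, and then carrying out the final simplification without slips in the powers of $2$.
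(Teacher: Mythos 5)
Your first route is exactly the paper's proof: it combines Corollary~\ref{permutability nonabelian c38}, Corollary~\ref{permutability nonabelian c1}(ii), \eqref{e62}, and the fact that $QD_{2^\alpha}$ has $\alpha+1$ proper normal subgroups, and your arithmetic checks out. (A minor aside: your value $|V(\Gamma_N(QD_{2^\alpha}))|=3\cdot 2^{\alpha-2}-4$ used in the second route is the correct one; the paper's displayed $3\cdot 2^{\alpha-3}-4$ is a typo.)
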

\begin{proof}
 Follows by Corollaries~\ref{permutability nonabelian c38}, \ref{permutability nonabelian c1}, \eqref{e62} and from the fact that the number of proper normal subgroups of $QD_{2^\alpha}$ is $\alpha+1$.
\end{proof}

\begin{cor}\label{permutability nonabelian c11}
Let $\alpha>3$ be an integer.

\begin{enumerate}[{\normalfont (i)}]
\item $\alpha(\Gamma(QD_{2^\alpha}))=3.2^{\alpha-4}$.
\item $\omega(\Gamma(QD_{2^\alpha}))=3(\alpha-1)$.
\item $\chi(\Gamma(QD_{2^\alpha}))=3(\alpha-1)$.
\item $\Gamma(QD_{2^\alpha})$ is weakly perfect.
\item $\Gamma(QD_{2^\alpha})$ is non-Eulerian.
\item $\Gamma(QD_{2^\alpha})$ is Hamiltonian.
\end{enumerate}
\end{cor}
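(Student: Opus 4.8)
The plan is to read the whole corollary off from the structural identity $\Gamma(QD_{2^\alpha})\cong K_{\alpha+1}+\Gamma_N(QD_{2^\alpha})$ of Theorem~\ref{permutability nonabelian c10}, combined with the decomposition $\Gamma_N(QD_{2^\alpha})\cong(K_2+\Gamma_N(D_{2^{\alpha-2}}))\cup(K_2+\Gamma_N(D_{2^{\alpha-3}}))$ from Theorem~\ref{permutability nonabelian t10} and the invariants of $\Gamma_N(QD_{2^\alpha})$ already recorded in Corollary~\ref{permutability nonabelian c30}. So I would settle (i)--(v) as bookkeeping, and then concentrate on the Hamiltonicity claim (vi).

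For (i)--(v): the group $QD_{2^\alpha}$ has exactly $r=\alpha+1$ proper normal subgroups and also has non-normal subgroups, so Corollary~\ref{permutability nonabelian c1} applies. Its parts (iii), (iv), (v) give $\alpha(\Gamma(QD_{2^\alpha}))=\alpha(\Gamma_N(QD_{2^\alpha}))$, $\omega(\Gamma(QD_{2^\alpha}))=(\alpha+1)+\omega(\Gamma_N(QD_{2^\alpha}))$ and $\chi(\Gamma(QD_{2^\alpha}))=(\alpha+1)+\chi(\Gamma_N(QD_{2^\alpha}))$; plugging in $\alpha(\Gamma_N(QD_{2^\alpha}))=3\cdot 2^{\alpha-4}$ and $\omega(\Gamma_N(QD_{2^\alpha}))=\chi(\Gamma_N(QD_{2^\alpha}))=2(\alpha-2)$ from Corollary~\ref{permutability nonabelian c30} yields $\alpha(\Gamma(QD_{2^\alpha}))=3\cdot 2^{\alpha-4}$ and $\omega(\Gamma(QD_{2^\alpha}))=\chi(\Gamma(QD_{2^\alpha}))=3(\alpha-1)$, which proves (i)--(iii), and since these two numbers agree, (iv) follows. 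For (v) I would read off from the degree list in Corollary~\ref{permutability nonabelian c26} a vertex of odd degree --- for instance a subgroup $H_i^r$, whose degree is $\alpha+x_i^r-1$ with $x_i^r$ from \eqref{e50} odd --- so $\Gamma(QD_{2^\alpha})$ has a vertex of odd degree and hence is not Eulerian.

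For (vi) the task is to match $\Gamma(QD_{2^\alpha})$ to the hypothesis of Lemma~\ref{l2}. Write $\Gamma(QD_{2^\alpha})\cong K_{\alpha+1}+(G_1\cup G_2)$ with $G_1=K_2+\Gamma_N(D_{2^{\alpha-2}})$ and $G_2=K_2+\Gamma_N(D_{2^{\alpha-3}})$; note that each $G_j$ is connected (a join with $K_2$ is connected) and that $G_1,G_2$ are the two components of $\Gamma_N(QD_{2^\alpha})$. The key step is to show each $G_j$ has a Hamiltonian path: by Theorem~\ref{permutability nonabelian t36}(i) the graph $\Gamma_N(D_{2^m})$ (for $m\geq 2$) splits into exactly two components, each of which carries a Hamiltonian path by Theorem~\ref{c1}; denoting these $P$ and $P'$ and the two $K_2$-vertices $u,v$, the walk $u-P-v-P'$ is a Hamiltonian path of $K_2+\Gamma_N(D_{2^m})$. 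The boundary value $\alpha=4$ needs a separate word, since then $\Gamma_N(D_{2^{\alpha-3}})=\Gamma_N(D_2)$ is empty ($D_2$ being a Dedekind group), so $G_2\cong K_2$, which trivially has a Hamiltonian path, while $G_1\cong K_2+2K_2$ via $\Gamma_N(D_4)\cong 2K_2$. Since $K_{\alpha+1}$ has a Hamiltonian path and the number of pieces equals $2$, with $2\leq\alpha+1$ for every $\alpha>3$, Lemma~\ref{l2} gives that $\Gamma(QD_{2^\alpha})$ is Hamiltonian.

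The step I expect to be the crux is this verification that $K_2+\Gamma_N(D_{2^{\alpha-2}})$ and $K_2+\Gamma_N(D_{2^{\alpha-3}})$ each possess a Hamiltonian path (so that Lemma~\ref{l2} applies), together with the low-index case $\alpha=4$ in which one of the factors $\Gamma_N(D_{2^m})$ degenerates; parts (i)--(v) are immediate from Corollaries~\ref{permutability nonabelian c1}, \ref{permutability nonabelian c30} and \ref{permutability nonabelian c26}.
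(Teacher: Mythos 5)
Your treatment of (i)--(iv) and (vi) follows the paper's own route: (i)--(iv) are exactly the bookkeeping via Corollary~\ref{permutability nonabelian c1} applied with $r=\alpha+1$ together with the values recorded in Corollary~\ref{permutability nonabelian c30}, and (vi) is the same reduction of $K_{\alpha+1}+(G_1\cup G_2)$ to Lemma~\ref{l2}. For (vi) you are in fact more careful than the paper: the paper cites Theorem~\ref{c1} as if it directly gave a Hamiltonian path in $G_j=K_2+\Gamma_N(D_{2^m})$, whereas you actually splice the Hamiltonian paths of the two components of $\Gamma_N(D_{2^m})$ through the two $K_2$-vertices, and you separately handle the boundary case $\alpha=4$ where the factor $\Gamma_N(D_{2^{\alpha-3}})$ degenerates. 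That part is sound.

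The gap is in (v). You exhibit a vertex $H_i^r$ of degree $\alpha+x_i^r-1$ with $x_i^r$ odd and conclude that this degree is odd; but if $x_i^r$ is odd then $x_i^r-1$ is even, so $\alpha+x_i^r-1\equiv\alpha\pmod 2$, and your vertex has odd degree only when $\alpha$ is odd. For even $\alpha$ the argument yields nothing, and it cannot be rescued by choosing a different vertex from Corollary~\ref{permutability nonabelian c26}: every degree listed there, namely $\alpha+3\cdot 2^{\alpha-2}-4$ for the normal subgroups and $\alpha+x_i^r-1$ with $x_i^r$ odd for the rest, is congruent to $\alpha$ modulo $2$, hence even when $\alpha$ is even. (The paper's own proof makes the corresponding slip in the even case, asserting that $\deg_{\Gamma(QD_{2^\alpha})}(H_0^r)$ is then odd; a direct check for $\alpha=4$ gives degrees $12$, $8$, $10$ and $6$ for the thirteen vertices of $\Gamma(QD_{16})$, all even, so since the graph is connected the non-Eulerian claim itself fails there.) So your proof of (v) covers only odd $\alpha$, and for even $\alpha$ there is a genuine gap --- indeed the evidence points to the statement being wrong in that case rather than to a missing argument.
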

\begin{proof}
(i)-(iii): Follows by the part (iii), (iv), and (v) of Corollary~\ref{permutability nonabelian c1} and by parts (i), (ii), (iii) of Corollary~\ref{permutability nonabelian c30}.

\noindent{(iv)}: Follows by \normalfont{(ii)} and \normalfont{(iii)}.

\noindent{(v)}: By Corollary~\ref{permutability nonabelian c26}\normalfont{(ii)}, if $\alpha$ is odd, then $\deg_{\Gamma(QD_{2^\alpha})}(H_i^r)$ is odd for
every divisor $r$ of $2^{\alpha-2}$, and by Corollary~\ref{permutability nonabelian c26}\normalfont{(i)}, if $\alpha$ is even, then $\deg(H_0^r)$ is odd for every divisor $r$ of $2^{\alpha-1}$. So the proof follows.

\noindent{(vi)}: By Theorems~\ref{permutability nonabelian t10} and \ref{permutability nonabelian c10},
$\Gamma(QD_{2^\alpha})=H+G_1\cup G_2$, where $H\cong K_{\alpha+1}$, $G_1\cong K_2+\Gamma_N(D_{2^{\alpha-2}})$ and $G_2\cong K_2+\Gamma_N(D_{2^{\alpha-3}})$.
Now, by
Corollary~\ref{c1}, $G_1$ and $G_2$ contains a Hamiltonian path. So by Lemma~\ref{l2}, $\Gamma(QD_{2^\alpha})$ is Hamiltonian.
\end{proof}

\section{Modular groups}\label{sec:mod}
For any integer $\alpha>2$ and any prime $p$, the modular group $M_{p^\alpha}$ of order $p^\alpha$ is defined by
\begin{equation*}
M_{p^\alpha}=\langle a,b|~a^{p^{\alpha-1}}= b^p=1, bab^{-1}=a^{p^{{\alpha-2}}-1}\rangle.
\end{equation*}
If $p^\alpha=8$, then $M_8\cong D_4$ and its corresponding permutability graphs are given by Theorem~\ref{permutability nonabelian t36}\normalfont{(i)}
and Corollary~\ref{permutability nonabelian c8}\normalfont{(i)}.
If $p^\alpha\neq 8$, then the subgroup lattice of $M_{p^\alpha}$ is isomorphic to that of $\mathds{Z}_{p^{\alpha -1}} \times \mathds{Z}_p$ and it is shown in~\cite[p. 210, Figure 4]{boh}.
Also if $p^\alpha\neq 8$, then the only proper normal subgroups of $M_{p^\alpha}$ are $\langle a^{p^{\alpha-2}}\rangle$, $\langle a^ib^j\rangle$ and $\langle a^k,b\rangle$, $i=1,2,\ldots,p^{\alpha-3}, j=0,1,\ldots,p-1$ and
$k=1,2,\ldots,p^{\alpha-2}$. Hence

\begin{equation*}\label{e63}
 |L(M_{p^\alpha})|=
	\begin{cases}
		4, & \mbox{if~} p^\alpha=8;  \\
		(\alpha-1)(p+1)+2, & \mbox{otherwise}.
	\end{cases}
\end{equation*}
\noindent and so
\begin{equation*}
|V(\Gamma(M_{p^\alpha}))|=
	\begin{cases}
		8, & \mbox{if~ } p^\alpha=8;  \\
		(\alpha-1)(p+1), & \mbox{otherwise}.
	\end{cases}
\end{equation*}
 \begin{equation*}
 |V(\Gamma_N(M_{p^\alpha}))|=
	\begin{cases}
		4, & \mbox{if~} p^\alpha=8;  \\
		p, & \mbox{otherwise}.
	\end{cases}
\end{equation*}
It is well known that if $p^\alpha \neq 8$, then any two subgroups of $M_{p^\alpha}$
permutes with each other.

Combining all these facts together, we have the following result.

\begin{thm}\label{permutability nonabelian t30}
Let $\alpha>3$ be an integer.

\begin{enumerate}[{\normalfont (i)}]
 \item $\Gamma_N(M_{p^\alpha})\cong
	\begin{cases}
		2K_2, & \mbox{if~ } p^\alpha =8;  \\
		K_p, & \mbox{otherwise.~ }
	\end{cases}$

\item $\Gamma(M_{p^\alpha})\cong
	\begin{cases}
		K_4+2K_2, & \mbox{if~ } p^\alpha =8;  \\
		K_{(\alpha-1)(p+1)}, & \mbox{otherwise.~ }
	\end{cases}$
\end{enumerate}
\end{thm}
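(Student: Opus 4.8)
The plan is to treat separately the two cases $p^{\alpha}=8$ and $p^{\alpha}\neq 8$, exactly as in the vertex-count formulas displayed just before the statement. For $p^{\alpha}=8$ I would push everything through the isomorphism $M_{8}\cong D_{4}$ recorded above: this gives $\Gamma_N(M_{8})\cong\Gamma_N(D_{4})$ and $\Gamma(M_{8})\cong\Gamma(D_{4})$, and then Theorem~\ref{permutability nonabelian t36}(i) gives $\Gamma_N(D_{2^{2}})\cong 2K_{2}$, while Corollary~\ref{permutability nonabelian c8}(i), applied with $n=2^{2}$, gives $\Gamma(D_{4})\cong K_{4}+\Gamma_N(D_{4})\cong K_{4}+2K_{2}$. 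This disposes of both parts in this case.

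For $p^{\alpha}\neq 8$ the decisive ingredient is the fact recalled above that any two subgroups of $M_{p^{\alpha}}$ permute. From this it is immediate that every two vertices of $\Gamma(M_{p^{\alpha}})$ are adjacent, so the graph is complete; and since $\Gamma_N(M_{p^{\alpha}})$ is the subgraph of $\Gamma(M_{p^{\alpha}})$ induced on the proper non-normal subgroups, it is complete as well. It then only remains to count vertices: from $|L(M_{p^{\alpha}})|=(\alpha-1)(p+1)+2$ we get $|V(\Gamma(M_{p^{\alpha}}))|=(\alpha-1)(p+1)$, hence $\Gamma(M_{p^{\alpha}})\cong K_{(\alpha-1)(p+1)}$, and the enumeration of the proper normal subgroups recalled above leaves precisely $p$ proper non-normal subgroups, hence $\Gamma_N(M_{p^{\alpha}})\cong K_{p}$.

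I do not expect a genuine obstacle, since the whole argument rests on two quoted facts: the isomorphism $M_{8}\cong D_{4}$ and the permutability of all subgroups of $M_{p^{\alpha}}$ when $p^{\alpha}\neq 8$ (equivalently, that such an $M_{p^{\alpha}}$ is an Iwasawa, i.e.\ quasi-Dedekind, $p$-group). The only place that calls for care is the bookkeeping for the non-normal subgroups: one must confirm from the subgroup-lattice description in~\cite{boh} that exactly $p$ of the proper nontrivial subgroups of $M_{p^{\alpha}}$ fail to be normal, and this is the step I would flag as the main, if still routine, point to verify. Finally, I would note that, as the statement stands with $\alpha>3$, the clause $p^{\alpha}=8$ is vacuous and is retained only to align with the earlier discussion, which is already valid for $\alpha\geq 3$.
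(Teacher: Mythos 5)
Your proposal is correct and follows essentially the same route as the paper, which likewise disposes of the case $p^\alpha=8$ via the isomorphism $M_8\cong D_4$ together with Theorem~\ref{permutability nonabelian t36}(i) and Corollary~\ref{permutability nonabelian c8}(i), and handles $p^\alpha\neq 8$ by combining the quasi-Dedekind property (all subgroups permute) with the vertex counts $|V(\Gamma(M_{p^\alpha}))|=(\alpha-1)(p+1)$ and $|V(\Gamma_N(M_{p^\alpha}))|=p$. Your side remark that the clause $p^\alpha=8$ is vacuous under the stated hypothesis $\alpha>3$ is also accurate.
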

Since  $\Gamma_N(M_{p^\alpha})$ and $\Gamma(M_{p^\alpha})$ are complete graphs if $p^\alpha\neq 8$, so one can easily obtain the other properties of
these graphs.

\section{Conclusions}
In this paper, we have studied the structure and some properties of permutability graphs of subgroups and permutability graph of non-normal subgroups
of the groups $D_n$,  $Q_n$,   $QD_{2^\alpha}$ and $M_{p^\alpha}$.
In particular, we showed that the structure of $\Gamma(D_n)$, $\Gamma_N(Q_n)$, $\Gamma(Q_n)$, $\Gamma_N(QD_{2^\alpha})$ and $\Gamma(QD_{2^\alpha})$
 depends on the structure of $\Gamma_N(D_n)$. In this sequel, in Theorems~\ref{permutability nonabelian t10} and \ref{permutability nonabelian c10}, we explicitly described structure of $\Gamma_N(QD_{2^\alpha})$ and $\Gamma(QD_{2^\alpha})$ respectively.
For the values of $n$ other than that mentioned in Theorem~\ref{permutability nonabelian t36}, the structure of the graph $\Gamma_N(D_n)$ becomes complicated and so further investigation  of the structure of $\Gamma_N(D_n)$ is essential to study the further properties of these remaining graphs.

Also, in Theorems~\ref{l7},~\ref{l1},~\ref{permutability nonabelian t25},~\ref{l8}, \ref{permutability nonabelian t26} and in Corollary \ref{permutability nonabelian c14}, we have discussed the Hamiltonicity of $\Gamma_N(D_n)$, $\Gamma(D_n)$, $\Gamma(Q_n)$ for some values of $n$. Now we pose the following:
\begin{prob}
Determine the values of $n$, for which $\Gamma_N(D_n)$, $\Gamma(D_n)$, $\Gamma(Q_n)$ are Hamiltonian.
\end{prob}


\end{document}